\newtheorem{theorem}{Theorem}[section]
\newtheorem{corollary}[theorem]{Corollary}
\newtheorem{lemma}[theorem]{Lemma}
\newtheorem*{lemma*}{Lemma}
\theoremstyle{definition}
\newtheorem*{observation}{Observation}
\newtheorem*{example}{Example}
\newtheorem{problem}{Problem}
\DeclareMathOperator{\av}{av}
\newcommand{\eav}{\overline{\av}}
\newcommand{\edge}{\ }
\newcommand{\Z}{\mathbb{Z}}
\title{Mutually avoiding Eulerian circuits}
\author{
	Grahame Erskine\\ \texttt{\small grahame.erskine@open.ac.uk}
	\and Terry Griggs\\ \texttt{\small terry.griggs@open.ac.uk}
	\and Robert Lewis\\ \texttt{\small robert.lewis@open.ac.uk}
	\and James Tuite\\ \texttt{\small james.t.tuite@open.ac.uk}\\[1ex]
    School of Mathematics and Statistics, The Open University, Milton Keynes, UK
}
\date{}
\begin{document}
	\maketitle
	\let\thefootnote\relax\footnote{Mathematics subject classification: 05C45}
	\let\thefootnote\relax\footnote{Keywords: Eulerian circuit; avoidance}
	
	\vspace*{-4ex}
	\begin{abstract}\noindent
		Two Eulerian circuits, both starting and ending at the same vertex, are \emph{avoiding} if at every other point of the circuits they are at least distance 2 apart. An Eulerian graph which admits two such avoiding circuits starting from any vertex is said to be \emph{doubly Eulerian}. The motivation for this definition is that the extremal Eulerian graphs, i.e. the complete graphs on an odd number of vertices and the cycles, are not doubly Eulerian. We prove results about doubly Eulerian graphs and identify those that are the ``densest'' and ``sparsest'' in terms of the number of edges.
	\end{abstract}
	
	\section{Introduction}\label{sec:intro}
	The problem of traversing a network, subject to certain constraints, in an efficient manner is a fundamental concept in graph theory and related disciplines. Examples such as the \emph{travelling salesman} problem are well known, where the constraint is to visit each node of the network at least once (see for example~\cite[Ch.15]{J}). A related problem is known as the \emph{Chinese postman} or \emph{route optimisation} problem~\cite{K} (also see~\cite[Ch.14]{J}), where the constraint is to traverse each edge in the network at least once. 
	
	The route optimisation problem is clearly trivial if the graph to be traversed is Eulerian. In that case, we propose here an extension of the problem to the case where the network is to be traversed by \emph{two} postmen simultaneously, both starting and finishing at the same node on the network, and both traversing each edge exactly once. The constraint we impose is that the two postmen should not be aware of each other's position on the route, except at the start and end points. By this we mean that except at the start and end points, at any given point in time the two postmen should neither be at the same vertex nor at adjacent vertices. Of course, we assume that each postman will traverse exactly one edge in one unit of time. One might ask why we do not allow the postmen to be at adjacent vertices: one motivation is as follows. Consider the simple case of a cycle graph of even order. Clearly the two postmen would have to set off in opposite directions round the cycle, and would therefore meet at the opposite vertex. But in the case of an odd cycle, the two would never be at the same vertex, but would `meet' in the sense that they would traverse the same edge in opposing directions. Our insistence that the postmen are not allowed to be at adjacent vertices means that the situation for odd and even cycles is consistent. Informally, the two postmen can neither meet at the same vertex nor be able to see one another at adjacent vertices by looking along an edge.
	
	We formulate the problem using standard terminology from graph theory; for definitions and notation not noted here see a standard text, for example~\cite{BM}. Throughout, let $G=(V,E)$ be a connected graph without loops and multiple edges where $V$ is the set of vertices and $E$ is the set of edges. A \emph{trail} in a graph is a sequence of vertices $v_1,v_2,\ldots v_m$ such that $v_i$ is adjacent to $v_{i+1}$ for $1\leq i\leq m-1$ and no edge of the graph is traversed more than once. A \emph{circuit} in a graph is a closed trail, i.e. a trail beginning and ending at the same vertex. An \emph{Eulerian circuit} in a graph is a circuit in which each edge is traversed exactly once. We say a graph is \emph{Eulerian} if it admits an Eulerian circuit; it is of course a foundational result in graph theory~\cite{E} that a graph is Eulerian if and only if every vertex has even valency.
	
	Let $G$ be an Eulerian graph with $m$ edges and let $u$ be a vertex of $G$. Let $C_1=u,v_1,v_2,\ldots v_{m-1},u$ and $C_2=u,w_1,w_2,\ldots w_{m-1},u$ be two Eulerian circuits in $G$. We say $C_1$ and $C_2$ are \emph{avoiding} if two criteria are met: (1) $v_i \neq w_i$, $1 \leq i \leq m-1$ and  (2) $v_i$ is not adjacent to $w_i$, $1\leq i\leq m-1$. Thus two circuits are avoiding if they are at distance at least 2 at every step apart from the beginning and end points. We say $G$ is \emph{doubly Eulerian} if it admits a pair of avoiding Eulerian circuits starting from \emph{any} vertex.
	
	The motivation for this definition is that the extremal Eulerian graphs, i.e. the complete graphs $K_{2n+1}$, $n \geq 1$, which have the largest ratio of edges to vertices of $n$, and the cycles $C_n$, $n \geq 3$, which have the smallest such ratio of $1$, are not doubly Eulerian.  This leads naturally to the question of identifying what are the ``densest'' and the ``sparsest'' doubly Eulerian graphs, and much of our paper is concerned with these questions.
	
	We can extend the concept of a doubly Eulerian graph further. Given an Eulerian graph $G$, we define the \emph{avoidance index} $\av(G)$ to be the largest integer $k$ such that starting from any vertex of $G$ we may find a set of $k$ mutually avoiding Eulerian circuits. A graph $G$ is therefore doubly Eulerian if and only if $\av(G)\geq 2$. 
	
    For small orders, we can calculate the avoidance index of Eulerian graphs directly by computer. Table~\ref{tab:small} shows the results. We see that no graph of order less than 8 is doubly Eulerian. Of the doubly Eulerian graphs of order 8, five of the seven are regular of valency 4, including the single example of avoidance index 3; we discuss these in Section~\ref{sec:maximal}. The remaining two graphs are discussed in Section~\ref{sec:minimal}. No graph of order 9 has avoidance index greater than 2. At order 10, the single example at avoidance index 4 is $K^*_{5,5}$ which is the complete bipartite graph $K_{5,5}$ minus a perfect matching; this graph is discussed in Section~\ref{sec:index}. The two order 10 graphs of avoidance index 3 are $K_{4,6}$ and the circulant graph with generating set $\{1,4\}$. The computations required to calculate the avoidance index are substantial, especially when there are a number of vertices of valency 6. We have therefore not been able to determine the split between avoidance index 1 and 2 for graphs of order 10.
	
	\begin{table}\centering
		\begin{tabular}{|c|c|cccc|}
			\hline
			Order & Eulerian & \multicolumn{4}{|c|}{Avoidance index} \\
			& graphs & 1 & 2 & 3 & 4 \\
			\hline
			3 & 1 & 1 & 0 & 0 & 0 \\
			4 & 1 & 1 & 0 & 0 & 0 \\
			5 & 4 & 4 & 0 & 0 & 0 \\
			6 & 8 & 8 & 0 & 0 & 0 \\
			7 & 37 & 37 & 0 & 0 & 0 \\
			8 & 184 & 177 & 6 & 1 & 0 \\
			9 & 1782 & 1692 & 90 & 0 & 0 \\
			10 & 31026 & ? & ? & 2 & 1 \\
			\hline
		\end{tabular}
		\caption{Avoidance index of small graphs}
		\label{tab:small}
	\end{table}
	
	The remainder of this paper is organised as follows. Section~\ref{sec:maximal} is concerned with doubly Eulerian graphs having the greatest number of possible edges for a given order, which we will call \emph{edge-maximal}; we give a bound on the possible number of edges in a doubly Eulerian graph and show that this bound can be attained for all orders 8 and above. 
	
	Bipartite graphs are of particular interest because the second criterion that $v_i$ is not adjacent to $w_i$, $1 \leq i \leq m-1$, automatically follows from the first criterion that  $v_i \neq w_i$, $1 \leq i \leq m-1$, and so becomes redundant. This is the subject matter of Section~\ref{sec:bipartite} where we give bounds for this special case, and show that again these bounds are attained.
	
	Section~\ref{sec:minimal} deals with doubly Eulerian graphs having the least number of possible edges which we will call \emph{edge-minimal}. The analysis of possible edge-minimal graphs is significantly more complicated than the edge-maximal case, and we restrict ourselves to constructions for certain special cases.
	
	We study the avoidance index in Section~\ref{sec:index}, and finally in Section~\ref{sec:summary}, we summarise our results and suggest a number of open questions and possible directions for future research.
	
	\section{Edge-maximal doubly Eulerian graphs}\label{sec:maximal}
	It seems intuitively clear that in some sense, very dense graphs are unlikely to be doubly Eulerian because it is difficult for the two postmen to avoid being adjacent at some point on the circuit. More formally, we may ask for the maximum possible number of edges in a doubly Eulerian graph of given order. We begin with two simple lemmas.
	\begin{lemma}\label{oddnvaln-1}
		There exists no doubly Eulerian graph of odd order $n\geq 3$ containing a vertex of valency $n-1$.
	\end{lemma}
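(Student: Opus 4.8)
The plan is to prove a slightly stronger statement: if $u$ is a vertex of the Eulerian graph $G$ with valency $n-1$, then \emph{no} pair of avoiding Eulerian circuits can be based at any vertex $z\neq u$, and this already shows $G$ is not doubly Eulerian. Such a $z$ exists since $n\geq 3$, and I note in passing that the hypotheses are only consistent when $n$ is odd, because an Eulerian graph has all valencies even, forcing $n-1$ to be even.

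First I would fix a start vertex $z\neq u$ and assume for contradiction that $C_1=z,v_1,\dots,v_{m-1},z$ and $C_2=z,w_1,\dots,w_{m-1},z$ are avoiding Eulerian circuits, where $m=|E(G)|$. The key observation is that neither circuit can pass through $u$ at an intermediate step: if $v_i=u$ for some $1\leq i\leq m-1$, then criterion~(2) requires $w_i$ to be non-adjacent to $u$, but since $G$ has no loops and $u$ has valency $n-1$, the only vertex of $G$ non-adjacent to $u$ is $u$ itself; hence $w_i=u=v_i$, contradicting criterion~(1). Therefore $v_i\neq u$ and, symmetrically, $w_i\neq u$ for every $1\leq i\leq m-1$.

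Then I would close the argument by contradicting the fact that an Eulerian circuit visits every non-isolated vertex: since $u$ has valency $n-1\geq 2$, it is incident to an edge, so $C_1$ must traverse that edge and hence visit $u$; but the endpoints of $C_1$ are $z\neq u$, so $u$ would have to occur as some $v_i$ with $1\leq i\leq m-1$, which we have just ruled out. This contradiction proves the lemma.

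There is essentially no computational difficulty; the only point requiring thought is the decision to base the two circuits at a vertex \emph{other} than $u$. Starting instead at $u$ yields only a weaker conclusion — a valency count at $u$ shows it must be revisited internally once $n\geq 5$, leaving the unique order-$3$ Eulerian graph $K_3$ to be checked separately — whereas shifting the basepoint turns $u$ into a mandatory interior vertex that the two circuits are forbidden to occupy simultaneously, and hence cannot occupy at all.
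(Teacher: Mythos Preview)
Your proof is correct and follows essentially the same approach as the paper: pick a basepoint $z\neq u$, and observe that when one circuit is at $u$ the other must be at a non-neighbour of $u$, which forces it to be at $u$ as well, contradicting the avoiding condition. The paper compresses this into a single sentence (``any two Eulerian circuits, beginning and ending at a vertex other than $v$, fail to be avoiding at $v$''), whereas you spell out the contradiction in full; the underlying idea is identical.
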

	\begin{proof}
		Let $G$ be an Eulerian graph of odd order $n\ge 3$ with vertex $v$ with valency $n-1$. Then $v$ is adjacent to every other vertex of $G$. Hence any two Eulerian circuits, beginning and ending at a vertex other than $v$, fail to be avoiding at $v$. 
	\end{proof}
	\begin{lemma}\label{evennvaln-2}
		There exists no doubly Eulerian graph of even order $n\ge 4$ containing a vertex of valency $n-2$.
	\end{lemma}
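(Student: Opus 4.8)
The plan is to argue by contradiction, and the one real idea is to choose the common starting vertex of the two circuits shrewdly. If $G$ is doubly Eulerian of even order $n\ge 4$ with a vertex $v$ of valency $n-2$, then $v$ is non-adjacent to exactly one other vertex $u$ and adjacent to everything else; in particular $u$ is the only vertex of $G$ at distance at least $2$ from $v$. I would start both circuits at $u$.

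So suppose $C_1=u,v_1,\dots,v_{m-1},u$ and $C_2=u,w_1,\dots,w_{m-1},u$ are avoiding Eulerian circuits, where $m=|E(G)|$. The key local observation is that whenever $v_i=v$ for some $1\le i\le m-1$, avoidance criterion~(2) forces $w_i$ to be a non-neighbour of $v$, while criterion~(1) rules out $w_i=v$, so $w_i=u$. Hence the set of interior times at which $C_1$ sits at $v$ is contained in the set of interior times at which $C_2$ sits at $u$.

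Now I would count occurrences. Since $v\ne u$, the vertex $v$ appears only among $v_1,\dots,v_{m-1}$; each such appearance uses two distinct edges at $v$, and as $C_1$ traverses every edge exactly once, $v$ appears exactly $(n-2)/2$ times. Therefore $w_i=u$ for at least $(n-2)/2$ interior indices $i$. On the other hand $u$ is the first and last vertex of $C_2$, so the edges $uw_1$ and $w_{m-1}u$ are already incident with $u$, and each interior appearance of $u$ uses a further two edges at $u$; since $u\not\sim v$ its valency is at most $n-2$, so $u$ appears at most $(n-2)/2-1$ times among $w_1,\dots,w_{m-1}$. These two bounds contradict each other, completing the argument.

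The only genuine obstacle is spotting the right starting vertex. The naive analogue of the proof of Lemma~\ref{oddnvaln-1} would start the circuits at $v$, but that merely shows $C_2$ must be at $u$ whenever $C_1$ is at $v$, which is perfectly consistent and leads nowhere. Starting instead at $u$ introduces the extra $-1$ arising from $u$ being an endpoint of $C_2$, and this is exactly what makes the two counts incompatible. Everything else is routine edge-counting for Eulerian circuits, and I anticipate no trouble with small cases (for $n=4$ the relevant counts are simply $1$ and $0$).
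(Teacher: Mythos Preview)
Your argument is correct and uses the same underlying counting idea as the paper, but with a small twist that actually makes it cleaner. The paper (with the roles of your $u$ and $v$ swapped in its notation) proceeds in two stages: first it starts the avoiding circuits at an \emph{arbitrary} vertex and uses the pairing to deduce that the unique non-neighbour of the degree-$(n-2)$ vertex must itself have degree $n-2$; only then does it start the circuits at the high-degree vertex and derive the contradiction from the complementary pairing of visits to the two special vertices. Your choice to start at the non-neighbour $u$ collapses this into a single step: you never need to know that $\deg(u)=n-2$, only that $\deg(u)\le n-2$, which is immediate. So your ``extra $-1$'' from $u$ being an endpoint does exactly the work that the paper's preliminary step does, and your version is marginally more economical. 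Your remark that starting at the high-degree vertex ``leads nowhere'' is a slight overstatement --- the paper makes that choice work --- but only after the additional observation you avoid.
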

	\begin{proof}
		Let $G$ be an Eulerian graph of even order $n\ge 4$ with vertex $u$ with valency $n-2$. Then there is exactly one vertex, $v$, not adjacent to $u$. Consider two Eulerian avoiding circuits $C_1$ and $C_2$ starting at a common vertex. At every step, if $C_1$ visits $u$ then $C_2$ must visit $v$. Hence, there are at least as many edges incident to $v$ as to $u$, and so $v$ also has valency $n-2$. Now consider two Eulerian circuits starting at vertex $u$. Before both return to $u$ for their final visit to complete the circuits, they will need to visit $v$ once more than $u$. But this is impossible as the visits to $u$ and $v$ occur as complementary pairs.
	\end{proof}
	Given the restrictions imposed by the above lemmas, the maximum possible number of edges in a doubly Eulerian graph of even order $n$ would be attained by an $(n-4)$-regular graph; and in such a graph of odd order $n$ by an $(n-3)$-regular graph. From Table~\ref{tab:small} we know that the smallest doubly Eulerian graph has order 8, and indeed there are five examples of 4-regular graphs of that order, see Figure~\ref{fig:doubly_8}. Graph~(e) is the complete bipartite graph $K_{4,4}$ and actually has avoidance index 3.
	
	\begin{figure}
		\centering
		\begin{tabular}{ccc}
			\begin{tikzpicture}[x=0.2mm,y=-0.2mm,inner sep=0.2mm,scale=0.8,thick,vertex/.style={circle,draw,minimum size=10,fill=lightgray}]
\node at (279,318) [vertex] (v1) {};
\node at (387,426) [vertex] (v2) {};
\node at (387,273) [vertex] (v3) {};
\node at (279,381) [vertex] (v4) {};
\node at (432,318) [vertex] (v5) {};
\node at (324,273) [vertex] (v6) {};
\node at (324,426) [vertex] (v7) {};
\node at (432,381) [vertex] (v8) {};
\path
	(v1) edge (v4)
	(v1) edge (v5)
	(v1) edge (v6)
	(v1) edge (v7)
	(v2) edge (v5)
	(v2) edge (v6)
	(v2) edge (v7)
	(v2) edge (v8)
	(v3) edge (v5)
	(v3) edge (v6)
	(v3) edge (v7)
	(v3) edge (v8)
	(v4) edge (v6)
	(v4) edge (v7)
	(v4) edge (v8)
	(v5) edge (v8)
	;
\end{tikzpicture} & \begin{tikzpicture}[x=0.2mm,y=-0.2mm,inner sep=0.2mm,scale=0.9,thick,vertex/.style={circle,draw,minimum size=10,fill=lightgray}]
\node at (443,339) [vertex] (v1) {};
\node at (341,440) [vertex] (v2) {};
\node at (341,297) [vertex] (v3) {};
\node at (443,398) [vertex] (v4) {};
\node at (401,297) [vertex] (v5) {};
\node at (300,398) [vertex] (v6) {};
\node at (401,440) [vertex] (v7) {};
\node at (300,339) [vertex] (v8) {};
\path
	(v1) edge (v4)
	(v1) edge (v5)
	(v1) edge (v6)
	(v1) edge (v7)
	(v2) edge (v4)
	(v2) edge (v6)
	(v2) edge (v7)
	(v2) edge (v8)
	(v3) edge (v5)
	(v3) edge (v6)
	(v3) edge (v7)
	(v3) edge (v8)
	(v4) edge (v5)
	(v4) edge (v7)
	(v5) edge (v8)
	(v6) edge (v8)
	;
\end{tikzpicture} & \begin{tikzpicture}[x=0.2mm,y=-0.2mm,inner sep=0.2mm,scale=0.75,thick,vertex/.style={circle,draw,minimum size=10,fill=lightgray}]
\node at (234,290) [vertex] (v1) {};
\node at (287,416) [vertex] (v2) {};
\node at (361,237) [vertex] (v3) {};
\node at (361,416) [vertex] (v4) {};
\node at (413,290) [vertex] (v5) {};
\node at (234,364) [vertex] (v6) {};
\node at (413,364) [vertex] (v7) {};
\node at (287,237) [vertex] (v8) {};
\path
	(v1) edge (v4)
	(v1) edge (v5)
	(v1) edge (v6)
	(v1) edge (v8)
	(v2) edge (v4)
	(v2) edge (v6)
	(v2) edge (v7)
	(v2) edge (v8)
	(v3) edge (v5)
	(v3) edge (v6)
	(v3) edge (v7)
	(v3) edge (v8)
	(v4) edge (v5)
	(v4) edge (v7)
	(v5) edge (v7)
	(v6) edge (v8)
	;
\end{tikzpicture} \\ 
			(a) & (b) & (c) \\
			\begin{tikzpicture}[x=0.2mm,y=-0.2mm,inner sep=0.2mm,scale=0.8,thick,vertex/.style={circle,draw,minimum size=10,fill=lightgray}]
\node at (326,454) [vertex] (v1) {};
\node at (439,340) [vertex] (v2) {};
\node at (279,406) [vertex] (v3) {};
\node at (392,294) [vertex] (v4) {};
\node at (392,454) [vertex] (v5) {};
\node at (279,340) [vertex] (v6) {};
\node at (439,407) [vertex] (v7) {};
\node at (326,293) [vertex] (v8) {};
\path
	(v1) edge (v3)
	(v1) edge (v5)
	(v1) edge (v6)
	(v1) edge (v7)
	(v2) edge (v4)
	(v2) edge (v5)
	(v2) edge (v7)
	(v2) edge (v8)
	(v3) edge (v5)
	(v3) edge (v6)
	(v3) edge (v8)
	(v4) edge (v6)
	(v4) edge (v7)
	(v4) edge (v8)
	(v5) edge (v7)
	(v6) edge (v8)
	;
\end{tikzpicture} & \begin{tikzpicture}[x=0.2mm,y=-0.2mm,inner sep=0.2mm,scale=0.8,thick,vertex/.style={circle,draw,minimum size=10,fill=lightgray}]
\node at (326,454) [vertex] (v1) {};
\node at (439,340) [vertex] (v2) {};
\node at (279,406) [vertex] (v3) {};
\node at (392,294) [vertex] (v4) {};
\node at (392,454) [vertex] (v5) {};
\node at (279,340) [vertex] (v6) {};
\node at (439,407) [vertex] (v7) {};
\node at (326,293) [vertex] (v8) {};
\path
	(v1) edge (v3)
	(v1) edge (v5)
	(v1) edge (v2)
	(v1) edge (v8)
	(v2) edge (v4)
	(v2) edge (v6)
	(v2) edge (v7)
	(v3) edge (v4)
	(v3) edge (v6)
	(v3) edge (v7)
	(v4) edge (v5)
	(v4) edge (v8)
	(v5) edge (v6)
	(v5) edge (v7)
	(v6) edge (v8)
	(v7) edge (v8)
	;
\end{tikzpicture} &  \\ 
			(d) & (e) &  \\
		\end{tabular}
		
		\caption{The five 4-regular doubly Eulerian graphs of order 8}
		\label{fig:doubly_8}
	\end{figure}
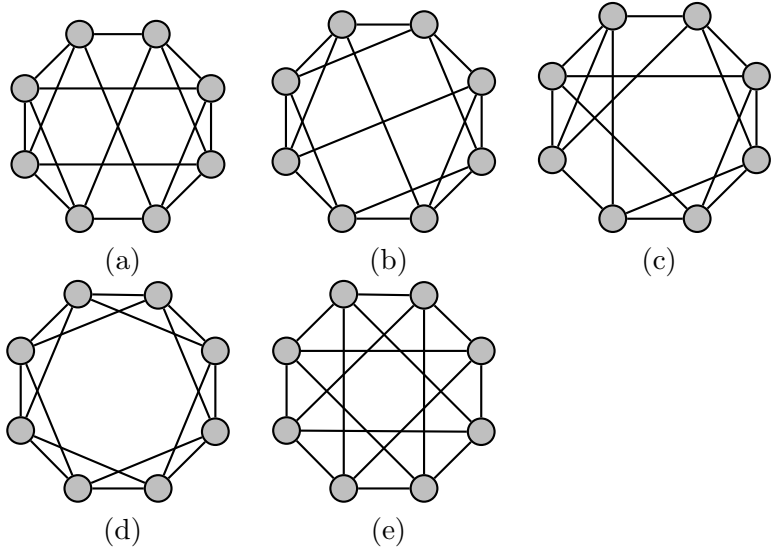
	
	It is interesting to note that there are exactly six 4-regular graphs of order 8, so only one of these fails to be doubly Eulerian. The exceptional graph turns out to be the complement of the cube, and is shown in Figure~\ref{fig:notdoubly_8}.
	
	\begin{figure}
		\centering
		\begin{tikzpicture}[x=0.2mm,y=0.2mm,thick,vertex/.style={circle,draw,minimum size=10,inner sep=0,fill=lightgray}]
	\node at (67.1,29.1) [vertex] (v1) {};
	\node at (27.7,-65.8) [vertex] (v2) {};
	\node at (-27.9,-65.8) [vertex] (v3) {};
	\node at (-67.1,29.1) [vertex] (v4) {};
	\node at (-67.2,-26.4) [vertex] (v5) {};
	\node at (-27.8,68.4) [vertex] (v6) {};
	\node at (27.8,68.4) [vertex] (v7) {};
	\node at (67.1,-26.6) [vertex] (v8) {};
	\draw (v1) to (v4);
	\draw (v1) to (v6);
	\draw (v1) to (v7);
	\draw (v1) to (v8);
	\draw (v2) to (v3);
	\draw (v2) to (v5);
	\draw (v2) to (v7);
	\draw (v2) to (v8);
	\draw (v3) to (v5);
	\draw (v3) to (v6);
	\draw (v3) to (v8);
	\draw (v4) to (v5);
	\draw (v4) to (v6);
	\draw (v4) to (v7);
	\draw (v5) to (v8);
	\draw (v6) to (v7);
\end{tikzpicture}
		
		\caption{The unique 4-regular graph of order 8 which is not doubly Eulerian}
		\label{fig:notdoubly_8}
	\end{figure}
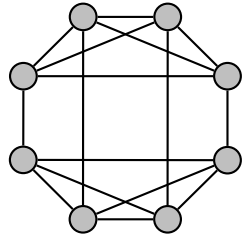
	
	Our next results show that these edge-maximal graphs do in fact exist for all orders greater than or equal to 8. The graphs that we prove to be edge-maximal in the following theorems are circulant graphs, and therefore vertex-transitive. Thus it suffices to find avoiding Eulerian circuits starting and ending from any vertex. The case where $n=6s+3,s\geq 1$, is relatively easy.
	
	\begin{theorem}\label{thm:3mod6}
		For any $n\equiv 3\pmod 6$ with $n\geq 9$, there exists an edge-maximal doubly Eulerian graph of order $n$.
	\end{theorem}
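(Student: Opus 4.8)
The plan is to take for $G$ an explicit circulant graph and exploit its structure heavily. Since $n=6s+3$ is odd, Lemma~\ref{oddnvaln-1} forbids a vertex of valency $n-1$, and as every vertex of an Eulerian graph has even valency the maximum number of edges is attained by an $(n-3)$-regular graph. I would take $G=\overline{C_n(n/3)}$, the complement of the circulant $C_n(n/3)$; because $n/3=2s+1$ divides $n$ exactly three times, $C_n(n/3)$ is a vertex-disjoint union of $2s+1$ triangles, so $G$ is the complete multipartite graph $K_{(2s+1)\times 3}$ whose parts are the residue classes $P_j=\{x\in\Z_n : x\equiv j \pmod{2s+1}\}$. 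This $G$ is $(n-3)$-regular with even valency $6s$, hence Eulerian and edge-maximal, and it is circulant, hence vertex-transitive, so it suffices to exhibit one pair of avoiding Eulerian circuits starting at the vertex $0$.

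I would first record what avoidance means here. As $G$ is complete multipartite, two vertices are non-adjacent precisely when they are equal or lie in a common part; hence for Eulerian circuits $C=(0,v_1,\dots,v_{m-1},0)$ and $C'=(0,w_1,\dots,w_{m-1},0)$, condition~(2) is a consequence of condition~(1) exactly when $v_i$ and $w_i$ lie in a common part. So $C$ and $C'$ are avoiding if and only if for every $1\le i\le m-1$ the vertices $v_i$ and $w_i$ are distinct members of the same part; in particular $C$ and $C'$ then visit the same sequence of parts $p_0=0,p_1,\dots,p_m=0$, which read inside $K_{2s+1}$ is a closed walk $\widetilde W$ that traverses each edge exactly nine times (since $m=\binom{2s+1}{2}\cdot 9$, with nine edges of $G$ above each edge of $K_{2s+1}$).

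For the construction I would start from an Eulerian circuit $W$ of $K_{2s+1}$ based at the part $0$ (which exists, as $K_{2s+1}$ is Eulerian), let $\widetilde W$ be the closed walk obtained by running through $W$ nine times with a small modification near its start, and then ``lift'' $\widetilde W$ to Eulerian circuits $C$ and $C'$ of $G$. At each occurrence of $\widetilde W$ in a part $P_p$ I would give $C$ a \emph{slot} in $\{0,1,2\}$ (so the corresponding vertex is $(p,\text{slot})$) and give $C'$ a different slot, choosing these so that: (i) over the nine passes of $\widetilde W$ across an edge $\{p,q\}$ of $K_{2s+1}$, the nine slot-pairs used by $C$ run through all nine edges of $G$ between $P_p$ and $P_q$, so that $C$ really is an Eulerian circuit, and likewise for $C'$; and (ii) the slots of $C$ and $C'$ agree only at the two appearances of the base vertex $0$. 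The rule for the nine passes over one edge is a small orthogonal-array/Latin-square pattern on $\Z_3$, and these patterns must be threaded consistently along $W$ so that the outgoing slot of one pass is the incoming slot of the next.

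The step I expect to be the main obstacle is exactly this slot bookkeeping: arranging the assignment to be simultaneously valid for $C$ and $C'$ while forcing equality at precisely the base vertex. The naive choice --- taking $C'$ to be $C$ with every slot increased by $1$ --- fails, since that uniform shift produces a two-edge ``defect'' at the base vertex and leaves $C'$ not a closed trail; so the shift has to be adjusted near the base vertex in a controlled, somewhat ad hoc way, which is why this case, though routine, still needs an explicit argument. Once $C$ and $C'$ have been built, they are avoiding by the reduction in the second paragraph, and vertex-transitivity of $G$ yields the theorem.
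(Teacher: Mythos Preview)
Your choice of graph and your reduction of the avoidance condition are both exactly what the paper uses: $G$ is the complete multipartite graph $K_{3,3,\ldots,3}$, realised as $K_n$ with all edges $i\,(i+n/3)$ deleted; it is $(n-3)$-regular and vertex-transitive, and two Eulerian circuits from $0$ are avoiding precisely when at every intermediate step they occupy distinct vertices of the same triple $\{j,j+n/3,j+2n/3\}$. You also correctly identify the shift $z\mapsto z+2n/3$ as the natural device for producing the second circuit from the first.

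Where your proposal falls short is that the central construction---the slot assignment that lifts an Eulerian circuit of $K_{2s+1}$, repeated nine times, to an Eulerian circuit of $G$, together with a second assignment differing at every interior step---is described only in outline, and you yourself flag the bookkeeping at the base vertex as the main obstacle without resolving it. As written this is a plan rather than a proof, and the extra layer (first build $\widetilde W$ on $K_{2s+1}$, then thread orthogonal-array patterns through its nine passes) creates more consistency conditions than are actually needed.

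The paper sidesteps the lifting entirely with a short direct argument. Pick $x,y\notin\{0,n/3,2n/3\}$ and remove from $G$ the four edges $\{0,x\},\{0,y\},\{n/3,x\},\{n/3,y\}$; the reduced graph is still Eulerian, so it has an Eulerian circuit, which one splits into two trails $T_1,T_2$ from $x$ to $y$. Then
\[
C_1:\quad 0,\;x,\;T_1,\;y,\;n/3,\;x,\;T_2,\;y,\;0
\]
is an Eulerian circuit of $G$. Its translate by $2n/3$ is an Eulerian circuit based at $2n/3$ in which $0$ appears exactly once in the interior, between the two translated trails. Swapping that interior $0$ with the endpoints $2n/3$ exchanges the four incident edges in two pairs and leaves the edge multiset unchanged, so the modified circuit $C_2$ is still Eulerian, now based at $0$. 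At the single modified interior step $C_1$ is at $n/3$ and $C_2$ is at $2n/3$; at every other interior step the two circuits differ by exactly $2n/3$. In both cases the pair lies in a common triple, so $C_1$ and $C_2$ are avoiding.

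The point is that the paper pre-engineers $C_1$ so that its shifted copy visits $0$ at one controllable interior position; this makes your ``ad hoc adjustment near the base'' a two-vertex swap rather than a global slot-threading problem.
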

	\begin{proof}
		By Lemma~\ref{oddnvaln-1}, such a graph must be regular of degree $n-3$. Let $G$ be the complete graph $K_n$ on vertex set $\{0,1,\ldots,n-1\}$. Delete from $G$ all edges $i\edge(i+n/3)$, $0\leq i\leq n-1$, with arithmetic mod $n$. The reduced graph $G'$ is regular of degree $n-3$. Choose vertices $x,y\notin\{0,n/3,2n/3\}$, with $x\neq y$, and remove edges $0\edge x$, $0\edge y$, $(n/3)\edge x$ and $(n/3)\edge y$. The graph remains Eulerian, so choose an Eulerian circuit and separate it into two Eulerian trails $T_1$ and $T_2$, both starting at vertex $x$ and ending at vertex $y$. Construct an Eulerian circuit of $G'$ as follows:
		
		$\quad 0,x,T_1,y,(n/3),x,T_2,y,0$.

  		Further construct a second ``parallel'' Eulerian circuit:
		
		$\quad (2n/3),(x+2n/3),T_1^{(+2n/3)},(y+2n/3),0,(x+2n/3),T_2^{(+2n/3)},(y+2n/3),(2n/3)$,
		
		where if the first circuit visits vertex $z$ at a particular step, the second visits vertex $(z+2n/3)$, arithmetic modulo $n$. Now replace $(2n/3)$ with 0 at the start and end points of the second circuit, and replace the path $(y+2n/3),0,(x+2n/3)$ with the path $(y+2n/3),(2n/3),(x+2n/3)$. The result is a pair of avoiding Eulerian circuits beginning and ending at 0. 
  	\end{proof}
	The remaining odd orders require a more complex construction.
	\begin{theorem}\label{thm:oddncirc}
		For any odd $n\geq 9$, there exists an edge-maximal doubly Eulerian graph of order $n$ which is regular of degree $n-3$.
	\end{theorem}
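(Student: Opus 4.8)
The plan is to reduce to the residues $n\equiv 1$ and $n\equiv 5\pmod 6$: the case $n\equiv 3\pmod 6$ is precisely Theorem~\ref{thm:3mod6}, whose construction is already regular of degree $n-3$. Edge-maximality will then be automatic, since by Lemma~\ref{oddnvaln-1} a doubly Eulerian graph of odd order $n$ has no vertex of valency $n-1$ and all valencies are even, forcing valency at most $n-3$; so the whole content is to exhibit one $(n-3)$-regular doubly Eulerian graph of each such order. As in Theorem~\ref{thm:3mod6} I would take this to be a circulant $G'$ on $\Z_n$, obtained from $K_n$ by deleting the edges $i\sim i+d$ for a suitable nonzero $d$, so that $G'$ is an $(n-3)$-regular Cayley graph, hence Eulerian and vertex-transitive, and it suffices to produce two avoiding Eulerian circuits based at the vertex $0$.

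The mechanism is the rotation trick already used for $n\equiv 3\pmod 6$: build one ``template'' Eulerian circuit $C_1$ starting at $0$ and obtain $C_2$ from a translate of $C_1$ by $d$. Wherever $C_2$ sits exactly $d$ ahead of $C_1$, the two circuits occupy vertices differing by $d$, which is a non-edge of $G'$, so they are at distance at least $2$; hence avoidance is free except at the few steps near the splice that re-anchors $C_2$ at $0$. That re-anchoring is unavoidable because a translate of a circuit based at $0$ is based at $d\neq 0$, and balancing edges when rerouting $C_2$ back to $0$ forces one short subpath of $C_2$ to be pushed through a prescribed vertex. For $n\equiv 3\pmod 6$ this caused no harm because $2d\equiv -d\pmod n$ when $3\mid n$, so even the rerouted step left the circuits a deleted difference apart; when $\gcd(n,3)=1$ this coincidence is lost, and $C_1$ must be chosen with extra structure so that the rerouted step, and hence the whole pair, stays at distance at least $2$.

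In detail, I would give $C_1$ the shape $0,x,T_1,y,B,x,T_2,y,0$, with $x\neq y$ two neighbours of $0$, a short bridge $B$ running from $y$ to $x$, and $T_1,T_2$ obtained by splitting an Eulerian circuit of the auxiliary graph $H$ equal to $G'$ with the edges at $0$, $x$, $y$ and along $B$ removed; $H$ is connected with all valencies even for $n\ge 11$, so it is Eulerian, and the usual device of adjoining two parallel $xy$-edges and cutting an Euler circuit of the result at them yields $T_1$ and $T_2$ as $x$-to-$y$ trails. One then forms $C_2$ as the $d$-translate of $C_1$, re-anchored at $0$. Checking that $C_2$ is a genuine Eulerian circuit is pure bookkeeping on the spliced edges; checking avoidance reduces to the $O(1)$ spliced steps near the start, the end and the bridge, which turn into a short list of non-coincidence conditions among explicit elements of $\Z_n$ built from $d$, $x$ and $y$. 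The freedom to choose $x$ and $y$, to lengthen $B$ by one or two vertices, and to let $C_2$ run $+d$ ahead on one half and $-d$ behind on the other (both being deleted differences) should make the list satisfiable for all $n\ge 11$ coprime to $6$, perhaps after separating the residues $1$ and $5$ modulo $6$.

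The main obstacle is precisely this re-anchoring. With $3\mid n$ a single hub vertex on the bridge sufficed, thanks to $2d\equiv -d$; without that I expect to need either a longer bridge, so that the surgery is spread over two edges whose differences can be tuned to lie in the deleted connection set $\{d,-d\}$, or a template $C_1$ that is itself translation-periodic, say $C_1(i+j)\equiv C_1(i)-2d\pmod n$ for some period $j$ dividing $m=|E(G')|$. In the latter case the work shifts to proving that such a $C_1$ exists, i.e.\ that $E(G')$ admits the corresponding structured Eulerian decomposition; that, rather than the avoidance check, is where any genuine difficulty and any case analysis on $n$ should reside.
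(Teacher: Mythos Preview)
Your plan is the right shape but it is not yet a proof: the only substantive step, the re-anchoring of the translated circuit at $0$ when $3\nmid n$, is left as a wish list. You correctly observe that without the coincidence $2d\equiv -d\pmod n$ the single-vertex splice from Theorem~\ref{thm:3mod6} no longer lands the two circuits at a deleted difference, and you propose either a longer bridge $B$ or a translation-periodic template $C_1$; but you do not specify $d$, $x$, $y$, or $B$, do not write down the finite list of non-coincidence conditions, and do not verify that any concrete choice satisfies them. The phrase ``should make the list satisfiable'' is exactly where the proof has to happen. A second, smaller gap: your auxiliary graph $H$ (obtained from $G'$ by deleting the edges at $0$, along $B$, and incident to $x,y$) must be shown connected before you can split an Eulerian circuit of it into $T_1,T_2$; this is not automatic once several edges around two or three nearby vertices are removed.

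The paper proceeds quite differently. It does not reduce to Theorem~\ref{thm:3mod6} at all: the cases $9\le n\le 21$ are handled by explicit pairs of circuits, and for $n\ge 23$ it fixes the deleted difference to be $f+1=(n-1)/2$ and builds $C_1$ as a concatenation of $n$ identical ``blocks'', each block a signed permutation of the generators summing to $1\pmod n$. Then $C_2$ is essentially $C_1$ shifted by $f+1$, but instead of a single surgical splice the correction is distributed across the last $f-3$ blocks by altering a handful of connection elements; the difference $C_2-C_1$ is kept in $\{f+1,f+2\}$ at every step and the Eulerian property of $C_2$ is checked by matching the $2f-1$ displaced edges in a table. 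So the paper trades your local re-anchoring (which you could not close) for a global block structure with many small, explicitly tabulated repairs; what it loses in elegance it gains in verifiability.
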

	\begin{proof}
		Let $X(V,G)$ be a circulant graph of odd order $n\ge9$ and even degree $d=n-3$ defined as a Cayley graph with vertex set $V=\Z_n$ and generator set $G=\{b_1,\dots, b_f\}$  where $f=d/2$ and each $b_i\le (n-1)/2$, so that the connection set is $\{\pm b_1, \dots, \pm b_f\}$. As $X$ is vertex-transitive, it suffices to demonstrate the existence of two avoiding Eulerian circuits $C_1$ and $C_2$ from an arbitrary vertex $u$. 
		
		For odd order $n$, where $ 9\leq n \leq 21$, we have the following constructions with circuits starting at vertex 0. It is easily verified that these circuits are Eulerian, also that the difference between each pair of vertices is not equal to a connection set element and not zero except at the endpoints. 
		
		Order $n=9, G=\{1,2,3\}$:
		
		Circuit $C_1$: 0	1	2	3	4	5	6	7	8	0	7	5	3	6	8	2	5	8	1	7	4	6	0	2	4	1	3	0
		
		Circuit $C_2$: 0	6	7	8	0	1	2	3	4	5	3	1	8	2	4	7	1	4	6	3	0	2	5	6	8	5	7	0.
		
		Order $n=11, G=\{1,2,3,4\}$:
		
		Circuit $C_1$: 0	1	2	3	4	5	6	7	8	9	10	0	2	4	6	8	10	1	3	5	7	9	0	4	1	5	2	10	7	4	8	0	7	3	6	9	1	8	5	9	2	6	10	3	0

		Circuit $C_2$: 0	7	8	9	10	0	1	2	3	4	5	6	7	9	0	2	4	6	8	10	1	3	5	9	6	10	7	5	2	10	3	6	2	9	1	4	7	3	0	4	8	1	5	8	0.
		
\newpage	
  Order $n=13, G=\{2,3,4,5,6\}$:
		
		Circuit $C_1$: 0	4	6	8	10	12	1	3	5	7	9	11	0	2	4	7	10	0	3	6	9	12	2	5	8	11	1	4	8	12	3	7	11	2	6	10	1	5	9	0	5	10	3	8	1	6	11	3	9	1	7	12	5	11	4	9	2	7	0	6	12	4	10	2	8	0
		
		Circuit $C_2$: 0	5	7	9	11	0	2	4	6	8	10	12	1	3	5	8	11	1	4	7	10	0	3	6	9	12	2	5	9	0	4	8	12	3	7	11	2	6	10	1	6	11	4	9	2	7	12	4	10	2	8	0	6	12	5	10	3	8	1	5	11	3	9	1	7	0.
		
		Order $n=15, G=\{1,2,3,4,5,6\}$:
		
		Circuit $C_1$: 0	4	5	14	4	1	14	3	4	13	3	0	13	2	3	12	2	14	12	1	2	11	1	13	11	0	1	10	0	12	10	14	0	9	14	11	9	13	14	8	13	10	8	12	13	7	12	9	7	11	12	6	11	8	6	10	11	5	10	7	5	9	10	4	9	6	4	8	9	3	8	5	3	7	8	2	7	4	2	6	7	1	6	3	1	5	6	0	5	2	0

		Circuit $C_2$: 0	12	13	7	12	9	7	11	12	6	11	8	6	10	11	5	10	7	5	9	10	4	9	6	4	8	9	3	8	5	3	7	8	2	7	4	2	6	7	1	6	3	1	5	6	0	5	2	0	4	5	14	4	1	14	3	4	13	3	0	13	2	3	12	2	14	12	1	2	11	1	13	11	0	1	10	14	11	9	14	0	9	13	10	8	13	14	8	12	10	0.
		
		Order $n=17, G=\{1,2,3,4,5,6,7\}$:
		
		Circuit $C_1$: 0	11	12	8	3	13	15	1	12	13	9	4	14	16	2	13	14	10	5	15	0	3	14	15	11	6	16	1	4	15	16	12	7	0	2	5	16	0	13	8	1	3	6	0	1	14	9	2	4	7	1	2	15	10	3	5	8	2	3	16	11	4	6	9	3	4	0	12	5	7	10	4	5	1	13	6	8	11	5	6	2	14	7	9	12	6	7	3	15	8	10	13	7	8	4	16	9	11	14	8	9	5	0	10	12	15	9	10	6	1	11	13	16	10	11	7	2	12	14	0
		
		Circuit $C_2$: 0	2	3	16	11	4	6	9	3	4	0	12	5	7	10	4	5	1	13	6	8	11	5	6	2	14	7	9	12	6	7	3	15	8	10	13	7	8	4	16	9	11	14	8	9	5	0	10	12	15	9	10	6	1	11	13	16	10	11	7	2	12	14	0	11	12	8	3	13	15	1	12	13	9	4	14	16	2	13	14	10	5	15	0	3	14	15	11	6	16	1	4	15	16	12	7	1	3	5	16	0	13	8	2	4	7	0	1	14	9	2	5	8	1	2	15	10	3	6	0.

    	Order $n=19, G=\{1,2,3,4,5,6,7,8\}$:

        Circuit $C_1$: 0 12 11 15 10 4 15 17 1 13 12 16 11 5 16 18 2 14 13 17 12 6 17 0 3 15 14 18 13 7 18 1 4 16 15 0 14 8 0 2 5 17 16 1 15 9 1 3 6 18 17 2 16 10 2 4 7 0 18 3 17 11 3 5 8 1 0 4 18 12 4 6 9 2 1 5 0 13 5 7 10 3 2 6 1 14 6 8 11 4 3 7 2 15 7 9 12 5 4 8 3 16 8 10 13 6 5 9 4 17 9 11 14 7 6 10 5 18 10 12 15 8 7 11 6 0 11 13 16 9 8 12 7 1 12 14 17 10 9 13 8 2 13 15 18 11 10 14 9 3 14 16 0
        
        Circuit $C_2$: 0 2 1 5 0 13 5 7 10 3 2 6 1 14 6 8 11 4 3 7 2 15 7 9 12 5 4 8 3 16 8 10 13 6 5 9 4 17 9 11 14 7 6 10 5 18 10 12 15 8 7 11 6 0 11 13 16 9 8 12 7 1 12 14 17 10 9 13 8 2 13 15 18 11 10 14 9 3 14 16 0 12 11 15 10 4 15 17 1 13 12 16 11 5 16 18 2 14 13 17 12 6 17 0 3 15 14 18 13 7 18 1 4 16 15 0 14 8 1 3 5 17 16 1 15 9 2 4 6 18 17 2 16 10 2 5 8 0 18 3 17 11 3 6 9 1 0 4 18 12 4 7 0
        
    	Order $n=21, G=\{1,2,3,4,5,6,7,8,9\}$:
     
        Circuit $C_1$: 0 13 12 8 13 19 5 17 19 1 14 13 9 14 20 6 18 20 2 15 14 10 15 0 7 19 0 3 16 15 11 16 1 8 20 1 4 17 16 12 17 2 9 0 2 5 18 17 13 18 3 10 1 3 6 19 18 14 19 4 11 2 4 7 20 19 15 20 5 12 3 5 8 0 20 16 0 6 13 4 6 9 1 0 17 1 7 14 5 7 10 2 1 18 2 8 15 6 8 11 3 2 19 3 9 16 7 9 12 4 3 20 4 10 17 8 10 13 5 4 0 5 11 18 9 11 14 6 5 1 6 12 19 10 12 15 7 6 2 7 13 20 11 13 16 8 7 3 8 14 0 12 14 17 9 8 4 9 15 1 13 15 18 10 9 5 10 16 2 14 16 19 11 10 6 11 17 3 15 17 20 12 11 7 12 18 4 16 18 0
        
        Circuit $C_2$: 0 2 1 18 2 8 15 6 8 11 3 2 19 3 9 16 7 9 12 4 3 20 4 10 17 8 10 13 5 4 0 5 11 18 9 11 14 6 5 1 6 12 19 10 12 15 7 6 2 7 13 20 11 13 16 8 7 3 8 14 0 12 14 17 9 8 4 9 15 1 13 15 18 10 9 5 10 16 2 14 16 19 11 10 6 11 17 3 15 17 20 12 11 7 12 18 4 16 18 0 13 12 8 13 19 5 17 19 1 14 13 9 14 20 6 18 20 2 15 14 10 15 0 7 19 0 3 16 15 11 16 1 8 20 1 4 17 16 12 17 2 9 1 3 5 18 17 13 18 3 10 2 4 6 19 18 14 19 4 11 2 5 7 20 19 15 20 5 12 3 6 9 0 20 16 0 6 13 4 7 10 1 0 17 1 7 14 5 8 0
		
		For odd order $n\geq 23$ and even degree $d=n-3$, the circulant graph $X(V,G)$ is defined as a Cayley graph with vertex set $V=\Z_n$ and generator set $G=\{1, 2, \dots, f\}$ where $f=d/2$, so that the set of connection elements is $\{\pm 1, \dots, \pm f\}$. Each edge of the graph is defined by a vertex and a connection element, so that each circuit may be defined by an initial vertex and a sequence of connection elements.
		
		Given any vertex of $X$, the only vertices not adjacent to it are the ones with difference $f+1$ and $f+2$ as these are not in the generator set. Therefore the two circuits must be constructed such that the difference between the pair of vertices at each step is one of these two values. The number of edges in each circuit is $nf$, and the circuits are constructed by concatenating $n$ paths of length $f$. Define a sequence $C=(c_1,c_2,\ldots,c_f)$ where the $c_i$ are a permutation of the generators $1,2,\ldots,f$ with the restriction that $c_1=f-1$, $c_{f-2}=f$, $c_{f-1}=2$ and $c_f=3$. Such a sequence can be taken to be, for example, $(f-1,1,4,5,\ldots,f-2,f,2,3)$. Then from an arbitrary vertex $v$, define a path $P$ of length $f$ by a sequence $B=(b_1,b_2,\ldots,b_f)$ of connection elements $b_i=\pm c_i$ where $b_1=-c_1$, $b_{f-2}=-c_{f-2}$, $b_{f-1}=c_{f-1}$, $b_f=c_f$ and for $i=2$ to $f-3$, $b_i=c_i$ for the values of $c_i$ specified in Table~\ref{table:330} and $b_i=-c_i$ otherwise. This ensures that $\sum_{i=1}^f b_i\equiv 1\pmod{n}$.
		
		\begin{table}[h]
	
			\setlength{\tabcolsep}{10pt}
			\centering 
			\begin{tabular} {|c l l l |}
				\hline 
				Order $n$ & \multicolumn{3}{l|}{Values of $c_i$ in the set $\{1, 4, \dots, f-2\}$ }   \\ 
				$\pmod 8$ & $c_i\le9$ & $c_i\ge10$ & Isolated value \\
				\hline 
				1 & 4, 5, 7 & $1, 2 \pmod4$ & - \\
				3 & 4, 7 & $2, 3 \pmod4$ & - \\ 
				5 & 5, 6, 7, 9 & $1, 2 \pmod4$ & $(n-21)/2$ \\ 
				7 & 1, 5, 6, 7 & $2, 3 \pmod 4$ & $(n-15)/2$ \\ 
				\hline 
			\end{tabular}
		      \caption{For $i=2$ to $f-3$, values of $c_i$ for which $b_i=c_i$}
                \label{table:330} 
		\end{table}
		
		Connection block $B$ includes one instance of each generator, taken either positive or negative. Starting at an arbitrary vertex $v\in\Z_n$, it defines a path $P$ ending at vertex $v+1$ (all arithmetic modulo $n$). We construct a path $C_1$ from vertex $u$ by the concatenation of $n$ paths $P_i$, $i=1,\dots, n$, defined by instances of $B$, labelled $B_i$. Then $C_1$ is a circuit of length $nf$. Consider an arbitrary edge $e$ of $X$, between vertices $v_1$ and $v_2$. Let $c=v_2-v_1$, so that $c$ is an element of the connection set and there is a generator $b_j\in G$ such that $b_j=\pm c$. Let $h=\sum_{i=1}^{j-1} b_i$. If $b_j=c$, then connection element $c$ defines an edge of $B_1$ from vertex $u+h$. Let $k=v_1+1-(u+h)$. Then, as $\sum_{i=1}^f b_i \equiv 1 \pmod{n}$, we have $e\in B_k$. Similarly, if $b_j=-c$, then $e\in B_k$ where $k=v_2+1-(u+h)$. So $X\subset C_1$ and therefore $C_1$ is an Eulerian circuit of $X$.
		
		Circuit $C_2$ starts from vertex $u$ with a path defined by connection block $B$, except that the first connection element is 2 instead of $-(f-1)$ in order to establish a difference of $f+1$; this is maintained by concatenating a further $f+5$ paths defined by block $B$. The rest of $C_2$ is constructed from $f-3$ paths defined by variants of block $B$. The $n$ concatenated paths of both circuits are defined in Table \ref{table:331}.
		
		\begin{table}[h]
			\setlength{\tabcolsep}{7pt}
			\centering 
			\begin{tabular} {|c c l l l l l|}
				\hline 
				Path & Number & \multicolumn{5}{l|}{Connection elements 1 to $f$} \\ 
				sequence & of paths & 1 & 2 to $f-3$ & $f-2$ & $f-1$ & $f$ \\
				\hline 
				\multicolumn{7}{|l|}{Circuit $C_1$} \\
				All (1 to $n$) & $n$ & $-(f-1)$ & $b_2$ to $b_{f-3}$ & $-f$ & 2 & 3 \\[1ex]
				\multicolumn{7}{|l|}{Circuit $C_2$} \\
				1 & 1 & 2 & $b_2$ to $b_{f-3}$ & $-f$ & 2 & 3 \\
				2 to $f+6$ & $f+5$ & $-(f-1)$ & $b_2$ to $b_{f-3}$ & $-f$ & 2 & 3 \\ 
				$f+7$ and $f+8$  & 2 & $-(f-1)$ & $b_2$ to $b_{f-3}$ & $-(f-1)$ & 2 & 2 \\ 
				$f+9$ to $2f$ & $f-8$ & $-(f-1)$ & $b_2$ to $b_{f-3}$ & $-f$ & 3 & 2 \\ 
				$2f+1$ & 1 & $-(f-1)$ & $b_2$ to $b_{f-3}$ & $-f$ & 3 & 3 \\ 
				$2f+2$ & 1 & $-f$ & $b_2$ to $b_{f-3}$ & $-f$ & 3 & 3 \\ 
				$2f+3$ & 1 & $-f$ & $b_2$ to $b_{f-3}$ & $-f$ & 3 & $-(f-1)$ \\ 
				\hline 
			\end{tabular}
			\caption{The connection elements for circuits $C_1$ and $C_2$ ($n$ paths of $f$ elements)}
			\label{table:331} 
		\end{table}
		
		With these paths, the difference between the pair of vertices at each step of circuits $C_1$ and $C_2$ is maintained at $f+1$ or $f+2$, as required. This is shown in Table \ref{table:332} below.
		
		\begin{table}[h]
			\setlength{\tabcolsep}{10pt}
			\centering 
			\begin{tabular} {|c c l l l l l|}
				\hline 
				Path & Number & \multicolumn{5}{l|}{Vertex position within the path} \\ 
				sequence & of paths & 1 & 2 to $f-3$ & $f-2$ & $f-1$ & $f$ \\
				\hline 
				& & \multicolumn{5}{l|}{Difference between the pair of vertices} \\
				1 & 1 & $f+1$ & $f+1$ & $f+1$ & $f+1$ & $f+1$ \\
				2 to $f+6$ & $f+5$ & $f+1$ & $f+1$ & $f+1$ & $f+1$ & $f+1$ \\ 
				$f+7$ and $f+8$ & 2 & $f+1$ & $f+1$ & $f+2$ & $f+2$ & $f+1$ \\ 
				$f+9$ to $2f$ & $f-8$ & $f+1$ & $f+1$ & $f+1$ & $f+2$ & $f+1$ \\ 
				$2f+1$ & 1 & $f+1$ & $f+1$ & $f+1$ & $f+2$ & $f+2$ \\ 
				$2f+2$ & 1 & $f+1$ & $f+1$ & $f+1$ & $f+2$ & $f+2$ \\ 
				$2f+3$ & 1 & $f+1$ & $f+1$ & $f+1$ & $f+2$ & 0 \\ 
				\hline 
			\end{tabular}
			\caption{The difference between the pair of vertices at each step of circuits $C_1$ and $C_2$}
			\label{table:332} 
		\end{table}
		
		In order to prove that circuit $C_2$ is Eulerian, it is sufficient to show that every edge of $C_1$ is contained in $C_2$. At the first step, $C_2$ establishes an offset of $f+1$ from $C_1$ and maintains this offset for a major part of the circuit. Therefore it is convenient to compare $C_2$ with a version of $C_1$ that is offset by $f+1$, denoted by $C_1^*$. We identify all the steps where the edges in these two circuits differ (a total of $2f-1$ steps) and confirm that these two sets of edges are identical. A step in each circuit is denoted by its block number, 1 to $2f+3$, and its position within the block, 1 to $f$. An edge in each circuit is denoted by the two vertices that it connects. It is clear from Table \ref{table:331} that the edges that differ at any step occur at positions 1, $f-2$, $f-1$, or $f$ within the blocks and are generated by elements 2, 3, $f-1$ or $f$. These $2f-1$ edges are listed in Table \ref{table:333}, along with their location within the circuits $C_1^*$ and $C_2$.
		\begin{table}[h]
			\setlength{\tabcolsep}{10pt}
			\centering 
			\begin{tabular} {|c c c l l l l|}
				\hline 
				Edge   & \multicolumn{2}{l}{Edge vertices} & \multicolumn{2}{l}{Circuit $C_1^*$} & \multicolumn{2}{l|}{Circuit $C_2$}   \\ 
				generator & from & to & Block & Position & Block & Position \\
				\hline 
				2 & 0 & 2 & $f+7$ & $f-1$ & 1 & 1 \\
				& 1 & 3 & $f+8$ & $f-1$ & $f+7$ & $f-1$ \\
				& 2 & 4 & $f+9$ & $f-1$ & $f+8$ & $f-1$ \\[1ex]
				& \multicolumn{6}{l|}{For $i=3$ to $f-4$:} \\
				& $i$ & $i+2$ & $f+7+i$ & $f-1$ & $f+4+i$ & $f$ \\[1ex]
				3 & $f-3$ & $f$ & $2f+2$ & $f$ & $2f+1$ & $f$ \\
				& $f-2$ & $f+1$ & $2f+3$ & $f$ & $2f+2$ & $f$ \\[1ex]
				& \multicolumn{6}{l|}{For $i=2$ to $f-4$:} \\
				& $i$ & $i+3$ & $f+5+i$ & $f$ & $f+7+i$ & $f-1$ \\[1ex]
				$f-1$ & $f-1$ & $0$ & $2f+2$ & $1$ & $2f+3$ & $f$ \\
				& $f$ & $1$ & $2f+3$ & $1$ & $f+7$ & $f-2$ \\
				& $f+1$ & $2$ & $1$ & $1$ & $f+8$ & $f-2$ \\
				$f$ & $f$ & $0$ & $f+7$ & $f-2$ & $2f+2$ & $1$ \\
				& $f+1$ & $1$ & $f+8$ & $f-2$ & $2f+2$ & $1$ \\
				\hline 
			\end{tabular}
			\caption{The edges that are at different locations within the circuits $C_1^*$ and $C_2$}
			\label{table:333}
		\end{table}
		
		By reference to Table \ref{table:332}, all the other edges occur at the same position within the two circuits. Therefore circuit $C_1^*$ contains all the edges of $C_2$ and so is Eulerian, which means that $C_1$ is also Eulerian. From Table~\ref{table:332}, the circuits $C_1$ and $C_2$ are avoiding.
	\end{proof}
	It remains to consider the case of even orders $n$. We deal separately with the two cases $n\equiv 0\pmod 4$ and $n\equiv 2\pmod 4$, and in each case we give two different proofs of the result to illustrate the techniques involved.
	\begin{theorem}\label{thm:max0mod4}
		For any $n\equiv 0\pmod{4}$ with $n\ge 8$, there exists an edge-maximal doubly Eulerian circulant graph of order $n$ which is regular of degree $n-4$.
	\end{theorem}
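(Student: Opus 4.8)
The plan is to produce an explicit $(n-4)$-regular circulant graph together with two avoiding circuits obtained by the translate-and-surgery device already used in the proof of Theorem~\ref{thm:3mod6}. First note that by Lemma~\ref{evennvaln-2} an edge-maximal doubly Eulerian graph of even order $n$ has no vertex of valency $n-2$, and since all valencies must be even while $n-3$ is odd, such a graph is necessarily $(n-4)$-regular; so it suffices to exhibit one $(n-4)$-regular circulant of order $n$ that is doubly Eulerian.

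I would take $X$ to be the circulant on $\Z_n$ with generating set $\{1,2,\dots,n/2-1\}\setminus\{n/4\}$: deleting from $K_n$ the $1$-factor of difference $n/2$ and the $2$-factor of difference $n/4$ leaves a graph of degree $n-4$, and since $X$ is vertex-transitive it is enough to produce avoiding Eulerian circuits based at the vertex $0$. The point of this choice is that two vertices $u,v$ lie at distance at least $2$ in $X$ exactly when $v-u\in\{n/4,\,n/2,\,3n/4\}$, the three deleted differences; consequently, for any circuit $C$ in $X$, the translate $C^{(+n/4)}$ (and likewise $C^{(+n/2)}$) stays at distance at least $2$ from $C$ at every step.

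Next I would pick distinct $x,y\in\Z_n\setminus\{0,n/4,n/2,3n/4\}$, which is possible since $n\ge8$, and let $X'$ be $X$ with the four (distinct) edges $0x$, $0y$, $(3n/4)x$, $(3n/4)y$ removed. In $X'$ all valencies remain even (those of $0$, $3n/4$, $x$, $y$ drop by $2$), and $X'$ is still connected: the edge-connectivity of the vertex-transitive graph $X$ equals its degree $n-4$, which exceeds $4$ for $n\ge12$, while the single remaining case $n=8$ (where $X=K_{4,4}$) is verified directly. Hence $X'$ is Eulerian; taking an Eulerian circuit of $X'$ written so as to begin at $x$ and cutting it at the first visit to $y$, I obtain edge-disjoint trails $T_1,T_2\colon x\to y$ (the second after reversal) with $E(T_1)\cup E(T_2)=E(X')$. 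Then
\[
C_1 \;=\; 0,\ T_1,\ \tfrac{3n}{4},\ T_2,\ 0
\]
(concatenation, identifying the shared endpoints $x$ and $y$) is a closed walk with edge set $E(X')\cup\{0x,\,0y,\,(3n/4)x,\,(3n/4)y\}=E(X)$ and no repeated edge, that is, an Eulerian circuit of $X$. Now translate by $n/4$: since $3n/4+n/4\equiv 0$, the circuit $C_1^{(+n/4)}$ begins and ends at $n/4$ while its ``pivot'' vertex --- the one between $T_1^{(+n/4)}$ and $T_2^{(+n/4)}$ --- is $0$. Swapping the roles of these two vertices, i.e. writing $0$ at the two ends and $n/4$ at the pivot, yields
\[
C_2 \;=\; 0,\ T_1^{(+n/4)},\ \tfrac{n}{4},\ T_2^{(+n/4)},\ 0 ,
\]
and a short check of the four ``joint'' edges confirms $E(C_2)=E(C_1^{(+n/4)})=E(X)$, so $C_2$ too is an Eulerian circuit of $X$, now based at $0$.

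Finally I would verify that $C_1$ and $C_2$ are avoiding. They have equal length, and at every step other than the two endpoints (both $0$) and the pivot step the vertex of $C_2$ is the vertex of $C_1$ translated by $n/4$; at the pivot step $C_1$ is at $3n/4$ and $C_2$ is at $n/4$, a difference of $n/2$. As $n/4$ and $n/2$ are deleted differences, $C_1$ and $C_2$ are avoiding, so $X$ is doubly Eulerian and hence edge-maximal. The main obstacle --- and the reason for deleting exactly the pair $\{n/4,\,n/2\}$ --- is making the surgery compatible with avoidance: the translation amount must be a deleted difference, and the pivot step forces twice that amount to be a deleted difference as well, which is why we translate by $n/4$, since $2\cdot(n/4)=n/2$. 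A second, independent proof can be given in the spirit of the connection-element construction in the proof of Theorem~\ref{thm:oddncirc}, building both circuits as concatenations of blocks of connection elements while keeping the two postmen at a fixed difference $n/2$ apart from a short correcting segment; the bookkeeping there is routine.
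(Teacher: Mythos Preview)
Your proof is correct and is essentially the paper's Proof~1: your circulant $X$ on $\Z_n$ with connection set $\{\pm1,\ldots,\pm(n/2-1)\}\setminus\{\pm n/4\}$ is precisely the complete multipartite graph $K_{4,4,\ldots,4}$ used there (the independent $4$-sets being the cosets of $\langle n/4\rangle$), and your translate-by-$n/4$-then-swap-endpoints device is exactly the ``parallel circuit, then replace $a_1$ with $a_0$'' step. The only cosmetic difference is that you split the reduced Eulerian circuit into two $x\to y$ trails whereas the paper keeps it as a single circuit based at $b_1$; both reconstructions give the same pair of avoiding circuits.
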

	\begin{proof}[Proof 1]
		Let $n=4k$, $k\geq 2$. Our first proof is direct; the circulant graph which we prove to be edge-maximal doubly Eulerian is the complete multipartite graph $K_{4,4,\ldots,4}$ consisting of $k$ independent sets of size 4 which we label $A,B,C,\ldots$. The vertices are labelled $a_0,a_1,a_2,a_3,b_0,b_1,\ldots$ in an obvious manner. All edges exist between sets $X$ and $Y$, $X,Y\in\{A,B,C,\ldots\}$, $X\neq Y$.
		
		We adopt a similar strategy to that in the proof of Theorem~\ref{thm:3mod6}. Remove the edges $a_0\edge b_0$, $a_0\edge b_1$, $a_3\edge b_0$ and $a_3\edge b_1$. The reduced graph remains Eulerian. Choose an Eulerian circuit $E$ beginning and ending at the vertex $b_1$. Extend this to an Eulerian circuit of $K_{4,4,\ldots,4}$ as follows:
		
		$\quad a_0,b_0,a_3,b_1,E,b_1,a_0$.

  \newpage
		Now construct a second ``parallel'' Eulerian circuit
		
		$\quad a_1,b_1,a_0,b_2,E^{(+1)},b_2,a_1$,
		
		where if the first circuit visits vertex $x_i$ at a particular step, the second visits $x_{i+1}$ (arithmetic modulo 4). Now replace $a_1$ with $a_0$ at the start and end points of the second ``parallel'' circuit and replace the first visit to $a_0$ with $a_1$. The result is a pair of avoiding Eulerian circuits, beginning and ending at $a_0$.
	\end{proof}
	\begin{proof}[Proof 2]
		Our second proof proceeds by induction. The result is true for the case $n=8$ since we have a pair of avoiding Eulerian circuits on the graph $K_{4,4}$ (see for example Table~\ref{tab:small} or Proof 1). Let $n=4k$, $k\geq 3$ and suppose we have a pair of avoiding Eulerian circuits on the graph $K_{4,4,\ldots,4}$ of order $4(k-1)$, with vertices labelled as in Proof 1. We now extend to the larger graph of order $4k$ by adjoining a new independent set $Z$ with vertices $z_0,z_1,z_2,z_3$.
		
		In the first circuit choose a vertex $a_0$, \emph{other than at the beginning or end of the circuit}. Replace $a_0$ by an Eulerian circuit on the $A$ and $Z$ partitions beginning and ending at $a_0$. In the second circuit, \emph{in the same place}, there will be vertex $a_i$ where $i\neq 0$. Replace $a_i$ by the same Eulerian circuit on the $A$ and $Z$ partitions as above, but to every vertex add $i$ to its index (arithmetic modulo 4). Now do the same for the vertex $b_0$ on the $B$ and $Z$ partitions, the vertex $c_0$ on the $C$ and $Z$ partitions and so on. The result is a pair of avoiding circuits beginning and ending at $a_0$.
	\end{proof}
	\begin{observation}
		We observed earlier that the complete bipartite graph $K_{4,4}$ has avoidance index 3. The second proof of the above theorem can be extended to show that the complete multipartite graph $K_{4,4,\ldots,4}$ has avoidance index at least 3. In fact it is exactly 3, which follows from a more general result which we prove in Section~\ref{sec:index}.
	\end{observation}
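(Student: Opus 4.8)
The plan is to run Proof~2 of Theorem~\ref{thm:max0mod4} with three mutually avoiding circuits in place of two, taking as base case the computed fact (Table~\ref{tab:small}, Figure~\ref{fig:doubly_8}(e)) that $\av(K_{4,4})=3$. Everything hinges on one elementary metric observation about the complete multipartite graph $K_{4,4,\dots,4}$ with at least two parts: two vertices are at distance $\ge 2$ precisely when they lie in the same part (the distance being then exactly $2$), while vertices in different parts are adjacent. Hence in \emph{any} family of mutually avoiding Eulerian circuits of such a graph, at every step other than the shared start/end all the circuits occupy one and the same part, on pairwise distinct vertices of it; since a part has only four vertices, no more than four circuits could ever be mutually avoiding, and the sharp bound $\av(K_{4,4,\dots,4})=3$ is the promised consequence of the Section~\ref{sec:index} result, so here I would only establish $\av(K_{4,4,\dots,4})\ge 3$.

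For the base case $n=8$ fix three mutually avoiding Eulerian circuits $C_1,C_2,C_3$ of $K_{4,4}$ from a common vertex. For the inductive step, assume $C_1,C_2,C_3$ are three mutually avoiding Eulerian circuits of $K_{4,4,\dots,4}$ of order $4(k-1)$, $k\ge 3$, with parts $A,B,C,\dots$ and vertices $x_0,\dots,x_3$ in part $X$, and adjoin a new part $Z=\{z_0,\dots,z_3\}$. Process the old parts one at a time. For $X$: since $C_1$ runs over every $X$-incident edge it has an interior step lying in $X$, and there $C_2$ and $C_3$ lie in $X$ too, on three distinct vertices; after relabelling $X$ we may take $C_1$ at $x_0$, $C_2$ at $x_i$, $C_3$ at $x_j$ with $0,i,j$ pairwise distinct mod $4$. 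Let $\mathcal E_X$ be an Eulerian circuit of the (as yet untraversed) bipartite graph between $X$ and $Z$, based at $x_0$, and splice $\mathcal E_X$ into $C_1$ at that step, $\mathcal E_X^{(+i)}$ into $C_2$, and $\mathcal E_X^{(+j)}$ into $C_3$, where the superscript adds the shift to every index mod $4$ — an automorphism of the $X$--$Z$ bipartite graph, so all three spliced chunks have equal length and the circuits stay in step. Carrying this out successively for $A$, then $B$, then $C$, and so on causes no interference, since each splice uses a fresh batch of $X$--$Z$ edges and is performed at a vertex left untouched by the earlier splices.

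The verification has two halves. Eulerian property: after all splices each $C_t$ has absorbed exactly the edges of every $X$--$Z$ bipartite graph on top of the old edges, hence traverses every edge of the order-$4k$ graph once. Avoidance: adjoining $Z$ leaves the distance between any two old vertices unchanged (the diameter stays $2$ and ``same part'' is unaffected), so outside the spliced chunks the circuits remain mutually avoiding; at the two endpoints of each chunk the configuration reproduces the original step $(x_0,x_i,x_j)$; and inside a chunk, whenever $\mathcal E_X$ visits a vertex $v$ the three circuits sit at $v$ and its two index-shifts, which are three distinct vertices of a single part ($X$ or $Z$) of the new graph and so pairwise at distance $2$. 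This yields three mutually avoiding Eulerian circuits of the order-$4k$ graph and completes the induction, giving $\av(K_{4,4,\dots,4})\ge 3$; combined with the upper bound of Section~\ref{sec:index} this is an equality.

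I expect the real work to be bookkeeping rather than ideas: checking that an interior step in each part $X$ is always available (there are $8(k-2)$ visits to spare), that the splice points for distinct parts can be kept disjoint, and that the three shifted copies $\mathcal E_X^{(+i)}$ really do stay synchronised with $\mathcal E_X$ step for step. The conceptual content — that three mutually avoiding circuits exist at all — is imported wholesale from the $K_{4,4}$ base case, and the metric observation both drives the construction and explains why the index sits at exactly $3$.
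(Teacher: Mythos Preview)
Your proposal is correct and follows precisely the approach the paper intends: the Observation only sketches that Proof~2 of Theorem~\ref{thm:max0mod4} extends from two to three circuits, and the paper later formalises this as Corollary~\ref{cor:multipartite} (for general $K_{2s,\dots,2s}$) using exactly your induction with the index-shift splicing; the upper bound of~3 is Lemma~\ref{lem:avbounds}(iii), which you rightly defer to Section~\ref{sec:index}. Your write-up simply fills in the bookkeeping the paper leaves implicit.
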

	
	We now turn our attention to the remaining case where $n\equiv 2\pmod 4$.
	\begin{theorem}\label{thm:max2mod4}
		For any $n\equiv 2\pmod{4}$ with $n\geq 10$, there exists an edge-maximal doubly Eulerian circulant graph of order $n$ which is regular of degree $n-4$.
	\end{theorem}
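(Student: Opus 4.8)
The plan is to imitate the two-proof structure of Theorem~\ref{thm:max0mod4}. As a preliminary, note that since $n$ is even every vertex of an Eulerian graph of order $n$ has even valency, so Lemma~\ref{evennvaln-2} rules out valency $n-2$ and hence caps every valency at $n-4$; an edge-maximal doubly Eulerian graph of order $n\equiv 2\pmod 4$ must therefore be $(n-4)$-regular, with $3$-regular complement. Because $4\nmid n$, that complement cannot be a disjoint union of copies of $K_4$ — equivalently, the graph cannot be complete multipartite — which is exactly why this case does not reduce to the template of Proof~1 of Theorem~\ref{thm:max0mod4}. So the target is an $(n-4)$-regular circulant $X$ on $\Z_n$ whose set of omitted differences is a symmetric set $F=\{a,-a,n/2\}$ of size $3$, the element $n/2$ being forced as the unique non-zero self-inverse element.

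For the direct proof I would mimic Theorem~\ref{thm:3mod6}. Take $X$ to be $K_n$ with a $3$-regular circulant complement removed; natural candidates are the prism over $C_{n/2}$, with connection set $\{2,n/2\}$, and, when $3\mid n$, the disjoint union of $n/3$ triangles and a perfect matching, with connection set $\{n/3,2n/3,n/2\}$. Choose $s\in F\setminus\{n/2\}$ and vertices $x,y$ outside $\{0\}\cup F$ (chosen so that the four edges below exist and the reduced graph stays connected) with $x\neq y$; delete the edges $0\edge x$, $0\edge y$, $s\edge x$, $s\edge y$, note that the reduced graph is still Eulerian, split one of its Eulerian circuits into two trails $T_1,T_2$ from $x$ to $y$, and assemble $C_1=0,x,T_1,y,s,x,T_2,y,0$. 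The companion $C_2$ is the translate $C_1+\delta$ for a forbidden $\delta\in F$: the two circuits then differ by the constant $\delta$ at every step, so they are at distance $\geq 2$ automatically except at the endpoints and at the one position where the shifted copy of $s$ collides with the start vertex $0$; relabelling that vertex and re-routing the length-$2$ segment around it restores an Eulerian circuit, the dropped and added edges cancelling exactly as in Theorem~\ref{thm:3mod6}. For this last step to preserve avoidance as well one needs $s=-\delta$ together with $2\delta\in F$, which holds for $F=\{n/3,2n/3,n/2\}$ with $\delta=2n/3$. I anticipate that the residues $n\equiv 10\pmod{12}$, for which no such $\delta$ exists, will require either a separate sub-case, a two-patch variant of the surgery, or the circuits written out explicitly as connection-element sequences in the manner of Theorem~\ref{thm:oddncirc}.

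For the inductive proof I would take $n=10$ as the base case — verified directly, or via the construction above on a suitable $6$-regular circulant such as $K_{10}$ minus the pentagonal prism — and then pass from order $n$ to order $n+4$ by adjoining four mutually non-adjacent vertices joined to everything else, which restores a graph regular of degree $n$ in which each old vertex gains exactly four neighbours, and splicing into each of $C_1$ and $C_2$ — at a carefully chosen interior occurrence, at matching positions displaced by the translation used — an Eulerian tour of the newly created subgraph, precisely the move of Proof~2 of Theorem~\ref{thm:max0mod4}. Since the host graph is not complete multipartite here, the spliced subgraph has to be described by hand rather than as a partition class, and one must check separately that the enlarged graph is again of the chosen circulant type.

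The main obstacle throughout is the parity of $n/2$: when $n\equiv 2\pmod 4$ it is odd, the $3$-regular complement is forced to contain a non-complete component, and the convenient arithmetic of $K_{4,\dots,4}$ (and, for odd multiples of $3$, of $K_{3,\dots,3}$) disappears. In the direct proof the identity $2\delta\in F$ that drove the endpoint surgery in Theorems~\ref{thm:3mod6} and~\ref{thm:max0mod4} can fail, so the surgery must be redesigned for half the residues modulo $12$; in the inductive proof the difficulty is to pin down a vertex-splicing operation that simultaneously keeps the graph regular of degree $n-4$, keeps it a circulant, keeps it Eulerian, and keeps the two circuits avoiding. Once the right graphs and the right surgery are in hand, the remaining checks — that the assembled walks are Eulerian and that consecutive positions always differ by an element of $F$ — are routine, as in the earlier theorems.
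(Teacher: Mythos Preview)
Your proposal is a plan with explicitly acknowledged gaps rather than a proof: the direct approach only closes when $3\mid n$ (i.e.\ $n\equiv 6\pmod{12}$), leaving $n\equiv 10\pmod{12}$ to ad hoc repair, and the inductive $n\to n+4$ step gives no reason the enlarged graph remains a circulant (indeed, adjoining four mutually non-adjacent vertices joined to everything else is not an operation that preserves any obvious $\Z_{n+4}$-symmetry). So as it stands neither branch establishes the theorem.

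The paper's two proofs go quite differently from yours. Proof~1 simply \emph{abandons} the circulant requirement: it works with $K_{4,4,\ldots,4,6}$ (together with a $6$-cycle on the size-$6$ part to restore $(n-4)$-regularity), and uses a parallel-circuit trick with two different shifts --- by $1$ inside each size-$4$ class and by $2$ inside the size-$6$ class --- plus explicit base circuits at $n=10$. This sidesteps the $2\delta\in F$ obstruction you identified, at the cost of not producing a circulant. Proof~2 is the one that actually delivers the circulant, and it is a \emph{doubling} rather than an additive induction: starting from the already-proved odd case (Theorem~\ref{thm:oddncirc}) on $\Z_{2m+1}$, it takes two disjoint copies, joins $x$ to $y'$ whenever $x\neq y$, and lifts the pair of avoiding circuits on the small graph to a pair on the big one by interleaving a ``zig-zag'' $x\,y'\,x'\,y$ expansion of each edge, a Hamiltonian sweep of the cross-edges, and a constant translate for the second circuit. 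The resulting graph is the very circulant you wrote down --- complement the prism, forbidden set $F=\{2,-2,n/2\}$ --- but the avoiding circuits are inherited from the odd case rather than built by your endpoint surgery, which is why the $2\delta\in F$ condition never arises. Small orders $n=10,14$ (where Theorem~\ref{thm:oddncirc} is not yet available) are handled by explicit circuits.

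The idea you are missing, then, is to reduce to the odd case already in hand instead of attacking the even case from scratch: write $n=2(2m+1)$ and lift.
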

	Again we offer two proofs but, unlike in the previous theorem, here the graphs which we prove to be edge-maximal doubly Eulerian are different. In the first proof the graph is not a circulant; in the second it is.
	\begin{proof}[Proof 1]
		We follow the strategy of the first proof of Theorem~\ref{thm:max0mod4}. First consider the case $n=10$. Let $G$ be the graph with vertex set $\{i:0\leq i\leq 9\}$. For all $i$, $0\leq i\leq 3$ and all $j$, $4\leq j\leq 9$, vertex $i$ is connected to vertex $j$. Further edges are $4\edge 5$, $5\edge 6$, $6\edge 7$, $7\edge 8$, $8\edge 9$ and $9\edge 4$. We need to exhibit two pairs of mutually avoiding Eulerian circuits, one starting and ending at the vertex 0 and the other at the vertex 4.
		
		Vertex 0 circuit $C_1$: 0	4	1	5	2	6	3	7	0	8	1	9	2	4	5	6	7	8	9	4	3	5	0	6	1	7	2	8	3	9	0
		
		Vertex 0 circuit $C_2$: 0	6	2	7	0	8	1	9	3	4	2	5	3	6	7	8	9	4	5	6	1	7	3	8	2	9	0	4	1	5	0
		
		Vertex 4 circuit $C_1$: 4	1	5	2	6	3	7	0	8	1	9	2	4	5	6	7	8	9	4	3	5	0	6	1	7	2	8	3	9	0	4
		
		Vertex 4 circuit $C_2$: 4	2	7	1	8	0	9	3	4	0	5	3	6	7	8	9	4	5	6	0	7	3	8	2	9	1	6	2	5	1	4
		
		Now let $n=4k+2$, $k\geq 3$ and consider the graph $K_{4,4,\ldots,4,6}$ of order $4k+2$. This graph consists of $k-1$ independent sets of size 4, which we label $A,B,C,\ldots$, together with one further set $Z$ of size 6. The vertices are labelled $a_0,a_1,a_2,a_3,b_0,b_1,\ldots$ and $z_0,z_!,\ldots,z_5$ in an obvious manner. All edges exist between sets $X$ and $Y$, $X,Y\in\{A,B,C,\ldots,Z\}$, $X\neq Y$. In addition, to achieve an $(n-4)$-regular graph we include a 6-cycle $z_0,z_1,\ldots,z_5,z_0$.
		
		As before, we construct one Eulerian circuit on this graph starting from vertex $a_0$ with initial vertices $a_0,b_0,a_3,b_1,\ldots$ and ending with $\ldots,b_1,a_0$. Now construct a second `parallel' Eulerian circuit beginning $a_1,b_1,a_0,b_2,\ldots$ and ending $\ldots,b_2,a_1$. This time if the first circuit visits vertex $x_i$ at a particular step, where $x_i$ is in one of the sets $A,B,C,\ldots$ of size 4, the second visits $x_{i+1}$ (arithmetic modulo 4). But if the first circuit visits $z_i$, then the second visits $z_{i+2}$ (arithmetic modulo 6). As before, replace $a_1$ with $a_0$ as the start and end point of this new circuit, and replace the first visit to $a_0$ with $a_1$; the result is a pair of avoiding Eulerian circuits, starting and ending at $a_0$. 
		
		It remains to find a pair of such circuits starting and ending at $z_0$. For the first circuit, we choose one starting $z_0,a_0,z_4,a_1,\ldots$ and ending $\ldots,a_1,z_0$. The parallel circuit, using the same rules as above, begins $z_2,a_1,z_0,a_2,\ldots$ and ends $\ldots,a_2,z_2$. Now replace $z_2$ with $z_0$ as the start and end point of this new circuit, and replace the first visit to $z_0$ with $z_2$; the result is a pair of avoiding Eulerian circuits, starting and ending at $z_0$.
	\end{proof}
	For our second proof we present a doubling construction.
	\begin{proof}[Proof 2]
		Let $G$ be the graph obtained from the complete graph $K_{2m+1}$ on vertex set $\{0,1,2,\ldots,2m\}$ by removing the Hamiltonian cycle given by the edges $i\edge(i+1)$ (arithmetic modulo $2m+1$). By Theorem~\ref{thm:oddncirc}, for $m\geq 4$ the graph admits a pair of mutually avoiding Eulerian circuits $C_1$ and $C_2$, without loss of generality starting and ending at vertex 0. Represent $C_1$ as $0\ C_1^-\ 0\ C_1^+\ 0$, where the intermediate occurrence of 0 is the first time that the circuit returns to this vertex. Represent $C_2$ as $0\ C_2^-\ z\ C_2^+\ 0$, where $z$ is the vertex in $C_2$ when $C_1$ is at the first occurrence of 0. In fact, $z=\pm 1$.
		
		Now let $G'$ be an isomorphic copy of $G$ on the vertex set $\{0',1',2',\ldots,(2m)'\}$. Introduce connections between the graphs $G$ and $G'$ by adjoining all edges $x\edge y'$, $x\neq y$. The result is a circulant graph of order $4m+2$ and degree $4m-2$. Let $\hat{C}$ be the circuit (not Eulerian) obtained by replacing every edge $x\edge y$ in $C_1$ by the path $x\ y'\ x'\ y$, and further let $\hat{C}^{(z)}$ be the circuit offset by $z$, i.e. $x\edge y$ in $C_1$ is replaced by $(x+z)\ (y+z)'\ (x+z)'\ (y+z)$. The following are a pair of mutually avoiding Eulerian circuits.
		
		$\quad 0,C_1^-,0,\hat{C},0,1',2,3',\ldots,(2m),0',1,2',\ldots,(2m)',0,C_1^+,0$; and
		
		$\quad 0,C_2^-,z,\hat{C}^{(z)},z,(z+1)',(z+2),(z+3)',\ldots,(z+2m),z',(z+1),(z+2)',\ldots,(z+2m)',z,C_2^+,0$.
		
		It remains to deal separately with the two cases where $m=2$ or $m=3$. Pairs of mutually avoiding Eulerian circuits for these two cases are as follows.
		
		$4m+2=10$:
		
		Circuit $C_1$: 0\ 4'\ 3\ 2'\ 1\ 0'\ 4\ 3'\ 2\ 1'\ 0\ 3\ 1'\ 3'\ 0'\ 3\ 1\ 4'\ 2\ 4\ 2'\ 4'\ 1'\ 4\ 1\ 3'\ 0\ 2\ 0'\ 2'\ 0
		
		Circuit $C_2$: 0\ 3'\ 2\ 1'\ 0\ 4'\ 3\ 2'\ 1\ 0'\ 4\ 3'\ 0'\ 3\ 1'\ 4\ 2\ 0'\ 2'\ 4'\ 1'\ 3'\ 1\ 3\ 0\ 2'\ 4\ 1\ 4'\ 2\ 0

		$4m+2=14$:
		
		Circuit $C_1$: 0\ 6'\ 5\ 4'\ 3\ 2'\ 1\ 0'\ 6\ 5'\ 4\ 3'\ 2\ 1'\ 0\ 5\ 3\ 1\ 6\ 4\ 2\ 0\ 4'\ 2'\ 0'\ 5'\ 3'\ 1'\ 6'\ 4'\ 1\ 5'\ 2\ 6'\ 3\ 0'\ 4\ 1'\ 5\ 2'\ 6\ 3'\ 0\ 2'\ 5'\ 1'\ 4'\ 0'\ 5\ 1\ 6'\ 2'\ 4\ 0\ 5'\ 3\ 6\ 2\ 5\ 3'\ 6'\ 4\ 1\ 3'\ 0'\ 2\ 4'\ 6\ 1'\ 3\ 0
		
		Circuit $C_2$: 0\ 5'\ 4\ 3'\ 2\ 1'\ 0\ 6'\ 5\ 4'\ 3\ 2'\ 1\ 0'\ 6\ 5'\ 3'\ 1'\ 6'\ 4'\ 2'\ 0'\ 5'\ 2\ 0\ 5\ 3\ 1\ 6\ 4\ 2\ 6'\ 3\ 0'\ 4\ 1'\ 5\ 2'\ 6\ 3'\ 0\ 4'\ 1\ 3'\ 6'\ 2'\ 5'\ 1'\ 6\ 2\ 0'\ 3'\ 5\ 0'\ 4'\ 2\ 5\ 1\ 5'\ 3\ 6\ 4'\ 1'\ 3\ 0\ 2'\ 4\ 6'\ 1\ 4\ 0
		
	\end{proof}
	\begin{example}
		We illustrate Proof 2 of the preceding theorem by an example. Let $m=4$ and let $G$ be the complete graph $K_9$ on vertex set $\{0,1,\ldots,8\}$, with the Hamiltonian cycle given by the edges $i\edge(i+1)$ (arithmetic modulo 9) removed. A pair of mutually avoiding circuits on $G$ beginning and ending at 0 are the following.
		
		Circuit $C_1$: 0\ 7\ 5\ 3\ 1\ 8\ 6\ 4\ 2\ 0\ 4\ 8\ 3\ 6\ 2\ 5\ 8\ 2\ 7\ 4\ 1\ 6\ 0\ 5\ 1\ 7\ 3\ 0
		
		Circuit $C_2$: 0\ 6\ 4\ 2\ 0\ 7\ 5\ 3\ 1\ 8\ 3\ 7\ 2\ 5\ 1\ 4\ 7\ 1\ 6\ 3\ 0\ 5\ 8\ 6\ 2\ 8\ 4\ 0
		
		Applying the doubling construction employed in the proof yields a pair of mutually avoiding Eulerian circuits on a 14-regular graph of order 18 as follows.
		
		Circuit $C_1$: 0\ 7\ 5\ 3\ 1\ 8\ 6\ 4\ 2\ 0\ 7'\ 0'\ 7\ 5'\ 7'\ 5\ 3'\ 5'\ 3\ 1'\ 3'\ 1\ 8'\ 1'\ 8\ 6'\ 8'\ 6\ 4'\ 6'\ 4\ 2'\ 4'\ 2\ 0'\ 2'\ 0\ 4'\ 0'\ 4\ 8'\ 4'\ 8\ 3'\ 8'\ 3\ 6'\ 3'\ 6\ 2'\ 6'\ 2\ 5'\ 2'\ 5\ 8'\ 5'\ 8\ 2'\ 8'\ 2\ 7'\ 2'\ 7\ 4'\ 7'\ 4\ 1'\ 4'\ 1\ 6'\ 1'\ 6\ 0'\ 6'\ 0\ 5'\ 0'\ 5\ 1'\ 5'\ 1\ 7'\ 1'\ 7\ 3'\ 7'\ 3\ 0'\ 3'\ 0\ 1'\ 2\ 3'\ 4\ 5'\ 6\ 7'\ 8\ 0'\ 1\ 2'\ 3\ 4'\ 5\ 6'\ 7\ 8'\ 0\ 4\ 8\ 3\ 6\ 2\ 5\ 8\ 2\ 7\ 4\ 1\ 6\ 0\ 5\ 1\ 7\ 3\ 0
		
		Circuit $C_2$: 0\ 6\ 4\ 2\ 0\ 7\ 5\ 3\ 1\ 8\ 6'\ 8'\ 6\ 4'\ 6'\ 4\ 2'\ 4'\ 2\ 0'\ 2'\ 0\ 7'\ 0'\ 7\ 5'\ 7'\ 5\ 3'\ 5'\ 3\ 1'\ 3'\ 1\ 8'\ 1'\ 8\ 3'\ 8'\ 3\ 7'\ 3'\ 7\ 2'\ 7'\ 2\ 5'\ 2'\ 5\ 1'\ 5'\ 1\ 4'\ 1'\ 4\ 7'\ 4'\ 7\ 1'\ 7'\ 1\ 6'\ 1'\ 6\ 3'\ 6'\ 3\ 0'\ 3'\ 0\ 5'\ 0'\ 5\ 8'\ 5'\ 8\ 4'\ 8'\ 4\ 0'\ 4'\ 0\ 6'\ 0'\ 6\ 2'\ 6'\ 2\ 8'\ 2'\ 8\ 0'\ 1\ 2'\ 3\ 4'\ 5\ 6'\ 7\ 8'\ 0\ 1'\ 2\ 3'\ 4\ 5'\ 6\ 7'\ 8\ 3\ 7\ 2\ 5\ 1\ 4\ 7\ 1\ 6\ 3\ 0\ 5\ 8\ 6\ 2\ 8\ 4\ 0
	\end{example}
	\section{Bipartite edge-maximal doubly Eulerian graphs}\label{sec:bipartite}
	In Section~\ref{sec:maximal}, it was shown that there exist doubly Eulerian graphs which attain the upper density bound. In this section we restrict our attention to edge-maximal bipartite graphs. We have the following result.
	
	\begin{lemma}\label{lem:bipgeneral}
		Let $G$ be a bipartite Eulerian graph, and suppose $G$ admits a subgraph $K\cong K_{3,2}$ such that removing the edges of $K$ does not disconnect $G$. Let $v$ be a vertex of $G$ which is in the larger partition of $K$. Then $G$ admits a pair of avoiding Eulerian circuits starting and ending at $v$.
	\end{lemma}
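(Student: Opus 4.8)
The plan is to exploit that $G$ is bipartite, so that criterion~(2) in the definition of avoidance is automatic and it suffices to produce two Eulerian circuits from $v$ that occupy distinct vertices at every intermediate step. Write the parts of $K\cong K_{3,2}$ as $\{v,x,y\}$ (the larger part, containing $v$) and $\{s,t\}$, so that $E(K)=\{vs,vt,xs,xt,ys,yt\}$. First I would delete $E(K)$ to form $G':=G-E(K)$, which is connected by hypothesis; since $v,x,y$ each lose two edges and $s,t$ each lose three, the only odd-degree vertices of $G'$ are $s$ and $t$, so $G'$ has an Eulerian trail from $s$ to $t$, which I write as $s=w_0,w_1,\dots,w_N=t$ (here $N=m-6$, where $G$ has $m$ edges).

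Next I would record two elementary facts about this trail. First, $w_1\notin\{v,x,y\}$ and $w_{N-1}\notin\{v,x,y\}$, since the first edge $sw_1$ and the last edge $w_{N-1}t$ of the trail lie in $G'$, whereas every edge joining $\{v,x,y\}$ to $\{s,t\}$ was removed with $K$. Second, a trail never repeats an edge, so $w_i\ne w_{i+2}$ for every $i$: a vertex repeated two steps apart would force a reused edge. Then I would set
\[
  C_1 = v,\,s,\,w_1,\dots,w_{N-1},\,t,\,y,\,s,\,x,\,t,\,v, \qquad
  C_2 = v,\,t,\,x,\,s,\,w_1,\dots,w_{N-1},\,t,\,y,\,s,\,v.
\]
A direct check shows each of these traverses every edge of $G'$ exactly once (inside the copy of the trail) and each of the six edges of $K$ exactly once (in the short segments flanking the trail), and each returns to $v$; hence both are Eulerian circuits of $G$ starting and ending at $v$.

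The verification of avoidance is then bookkeeping, step by step. The key point is that $C_1$ enters the trail after one step while $C_2$ enters it after three, so throughout the overlapping stretch $C_1$ sits at $w_i$ exactly when $C_2$ sits at $w_{i-2}$, and these are distinct by the second recorded fact; the remaining handful of ``transition'' steps near the two ends reduce to $s\ne t$, to $x\ne y$, and to $w_1\ne x$ and $w_{N-1}\ne y$, which come from the first recorded fact. I would present this as a short position-by-position table rather than spelling out every comparison.

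The main obstacle is arriving at the right pair of circuits rather than verifying a given one. The obvious candidates fail: if $C_2$ merely reverses $C_1$, the two circuits meet at the midpoint of the common Eulerian trail, and since $G$ need not be vertex-transitive one cannot rescue matters with a global offset as in the circulant constructions of Section~\ref{sec:maximal}. The device that works is to have both circuits run the \emph{same} trail in the \emph{same} direction, with $C_2$ lagging $C_1$ by \emph{exactly two} steps --- two being precisely the smallest lag that a trail can never equalise --- and to absorb the resulting length discrepancy by routing $C_2$ into the trail along the length-three path $v,t,x,s$ instead of the single edge $vs$, while redistributing the six edges of $K$ between the segments before and after the trail. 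Once this structure is identified, the remaining work is routine, and finding that structure is where the hypothesis that $K\cong K_{3,2}$ with $v$ in the larger part is used in an essential way.
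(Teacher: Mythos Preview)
Your proof is correct and follows essentially the same approach as the paper: delete $E(K)$, take an Eulerian trail between the two resulting odd-degree vertices, and build two Eulerian circuits that both traverse this trail in the same direction with a lag of exactly two steps, using the six edges of $K$ to absorb the offset at the ends. The paper's version is terser---it simply writes down the pair $v,x,u,w,T,x,t,w,v$ and $v,w,t,x,u,w,T,x,v$ (in its own notation) without the supporting observations you record---but the underlying mechanism, including the two-step lag and the reasons the boundary comparisons succeed, is identical to yours.
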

	\begin{proof}
		Let the vertices of $K$ be $t,u,v$ in one partition and $w,x$ in the other. Remove the edges of $K$ from $G$. The result is a connected graph with every edge having even valency except $w,x$. So there exists an Eulerian trail $T$ in the graph from $w$ to $x$. Two avoiding Eulerian circuits in $G$ can now be constructed as follows.
		
		$\quad v,x,u,w,T,x,t,w,v$;
		
		$\quad v,w,t,x,u,w,T,x,v$.
	\end{proof}
	\begin{theorem}\label{thm:bipeven}
		The complete bipartite graph $K_{2r,2s}$, $r,s \geq 2$, is doubly Eulerian.
	\end{theorem}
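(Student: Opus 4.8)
The plan is to reduce the statement entirely to Lemma~\ref{lem:bipgeneral}. First note that $K_{2r,2s}$ is bipartite and that every vertex has even valency ($2s$ on the side of size $2r$, and $2r$ on the side of size $2s$), so $K_{2r,2s}$ is Eulerian and the lemma is applicable. Since $K_{2r,2s}$ is vertex-transitive only within each part, it does not suffice to treat a single vertex; instead I would show that for \emph{every} vertex $v$ one can locate a copy of $K_{3,2}$ inside $K_{2r,2s}$ that has $v$ in its larger partition and whose edge-removal leaves the graph connected.

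Fix an arbitrary vertex $v$, and assume without loss of generality that $v$ lies in the part $X$ of size $2r$; the remaining case is identical after interchanging the two parts and the roles of $r$ and $s$, which is legitimate because $s\ge 2$ as well. Since $2r\ge 4$ I can pick two further vertices $t,u\in X\setminus\{v\}$, and since $2s\ge 4$ I can pick two vertices $w,x$ in the other part $Y$. As $K_{2r,2s}$ is complete bipartite, all six edges between $\{t,u,v\}$ and $\{w,x\}$ are present, so they form a subgraph $K\cong K_{3,2}$ with $v$ in its larger partition $\{t,u,v\}$.

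The only real content is checking that deleting the edges of $K$ does not disconnect $K_{2r,2s}$. After the deletion, each of $t,u,v$ retains valency $2s-2\ge 2$, each of $w,x$ retains valency $2r-3\ge 1$, and every other vertex keeps its full valency; in particular $Y\setminus\{w,x\}$ and $X\setminus\{t,u,v\}$ are both nonempty. Every vertex of $Y\setminus\{w,x\}$ is still adjacent to all of $X$, so $X$ together with $Y\setminus\{w,x\}$ lies in a single component; and $w$ (respectively $x$) is still adjacent to some vertex of $X\setminus\{t,u,v\}$, hence to that same component. Thus the reduced graph is connected.

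Having verified the hypotheses, Lemma~\ref{lem:bipgeneral} yields a pair of avoiding Eulerian circuits of $K_{2r,2s}$ starting and ending at $v$; since $v$ was arbitrary, $K_{2r,2s}$ is doubly Eulerian. The argument is essentially bookkeeping, and the mild obstacle — if one wants to call it that — is simply keeping the edge-deletion degree counts straight and confirming connectivity, both of which fall out immediately from the hypotheses $r,s\ge 2$.
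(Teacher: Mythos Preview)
Your proposal is correct and follows exactly the same approach as the paper: invoke Lemma~\ref{lem:bipgeneral} at every vertex. The paper's own proof is a single sentence asserting that every vertex satisfies the hypotheses of the lemma; you have simply spelled out the verification (choice of the $K_{3,2}$ and the connectivity check after edge-deletion) that the paper leaves implicit.
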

	\begin{proof}
		Every vertex of $K_{2r,2s}$ satisfies the conditions of Lemma~\ref{lem:bipgeneral}, so we can construct two avoiding Eulerian circuits starting and ending at any vertex.
	\end{proof}
	
	We remark that complete bipartite graphs $K_{2,2s}$, $s \geq 1$, are not doubly Eulerian. Two circuits both starting at either vertex of the 2-partition cannot be avoiding. We have one further result. Denote by $K^*_{2r+1,2r+1}$, $r \geq 1$, the complete bipartite graph minus a perfect matching. 

	\begin{theorem}\label{thm:bipodd}
		The graph $K^*_{2r+1,2r+1}$, $r \geq 2$, is doubly Eulerian.
	\end{theorem}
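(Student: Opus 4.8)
The plan is to reduce everything to Lemma~\ref{lem:bipgeneral}. First note that $K^*_{2r+1,2r+1}$ is $2r$-regular with $2r$ even, hence Eulerian, and that it is bipartite, so at every step the two postmen occupy the same partition; consequently criterion (2) is automatic and only criterion (1) matters. Moreover the graph is vertex-transitive: every permutation of the common index set $\{0,1,\dots,2r\}$ acting simultaneously on both partitions is an automorphism, as is the swap $x_i\leftrightarrow y_i$, and these together act transitively on the vertices. Therefore it suffices to produce a pair of avoiding Eulerian circuits starting and ending at one chosen vertex, say $v=x_0$, and by Lemma~\ref{lem:bipgeneral} it is enough to exhibit a copy $K\cong K_{3,2}$ with $x_0$ in its three-element side such that deleting $E(K)$ keeps the graph connected.

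Concretely I would take the three-element side of $K$ to be $\{x_0,x_1,x_2\}$ and the two-element side to be $\{y_3,y_4\}$. Since $r\ge 2$ we have $2r+1\ge 5$, so the indices $3,4$ exist and are distinct from $0,1,2$; hence all six edges $x_iy_j$ with $i\in\{0,1,2\}$, $j\in\{3,4\}$ are present in $K^*_{2r+1,2r+1}$ and $K\cong K_{3,2}$ as required, with $v=x_0$ in its larger part.

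The only point needing a short argument is that $G-E(K)$ remains connected, and this is precisely where the smallest case $r=2$ deserves care, since there $y_3$ and $y_4$ drop to degree $2r-3=1$. I would argue directly: after the deletion the vertices $y_1,y_2$ are untouched, so $x_0$ still reaches $y_1$ and $y_2$; from $y_1$ one reaches every $x_i$ with $i\ne1$ (no edge at $y_1$ was removed); the remaining vertex $x_1$ is reached via the surviving edge $x_1y_0$; and each $y_j$ with $j\ge3$ is reached from a surviving neighbour $x_i$ with $i\notin\{0,1,2\}$, which exists because $2r+1\ge5$. Hence $G-E(K)$ is connected for all $r\ge2$, Lemma~\ref{lem:bipgeneral} applies to give avoiding Eulerian circuits from $x_0$, and vertex-transitivity upgrades this to ``from any vertex''. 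I do not expect a genuine obstacle here; the one thing to watch is the degenerate degree behaviour at $r=2$, handled by the connectivity check above (and note $r=1$ is correctly excluded, since $K^*_{3,3}=C_6$).
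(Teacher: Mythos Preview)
Your argument is correct and follows the same route as the paper: invoke Lemma~\ref{lem:bipgeneral} at every vertex. The paper's proof is a single line (``Every vertex of the graph satisfies the conditions of Lemma~\ref{lem:bipgeneral}''), whereas you supply the details the paper omits --- an explicit $K_{3,2}$, the connectivity check (including the delicate $r=2$ case), and the vertex-transitivity reduction --- so your version is strictly more informative. One tiny gap in your connectivity paragraph: you reach $x_1$ ``via the surviving edge $x_1y_0$'' without first saying how $y_0$ is reached; but this is immediate from any already-reached $x_i$ with $i\notin\{0,1\}$, e.g.\ $x_2$.
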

	\begin{proof}
		Every vertex of the graph satisfies the conditions of Lemma~\ref{lem:bipgeneral}.
	\end{proof}
	
	Finally, again we remark that the graph $K^*_{3,3}$ is the cycle $C_6$ and is not doubly Eulerian.

	\section{Edge-minimal doubly Eulerian graphs}\label{sec:minimal}
	Define the \emph{edge excess} of a graph $G=(V,E)$ to be $\xi = \left| E \right| - \left| V \right|$. If $\xi = 0$, then we have the cycles $C_n$, $n \geq 3$ which are not doubly Eulerian. If $\xi = 1$ then for an Eulerian graph G, one vertex has valency 4 and all others have valency 2. An example is illustrated in Figure~\ref{fig:xs12}(a).  It is immediate that such graphs cannot be doubly Eulerian; consider a starting vertex at maximum distance from the valency 4 vertex on either of the cycles. If $\xi = 2$ then for an Eulerian graph there are two possibilities: (1) one vertex has valency 6 and all others have valency 2, or (2) two vertices have valency 4 and all others have valency 2. Again graphs satisfying possibility (1) cannot be doubly Eulerian and an example is given in Figure~\ref{fig:xs12}(b).
	
	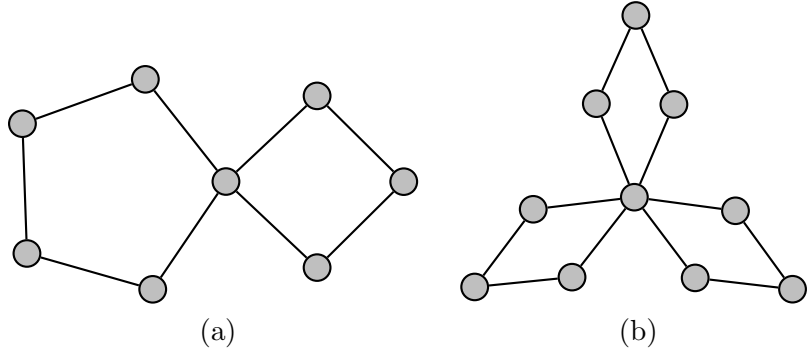
\begin{figure}
		\centering
		\begin{tabular}{cc}
			\begin{tikzpicture}[x=0.2mm,y=-0.2mm,inner sep=0.2mm,scale=0.9,thick,vertex/.style={circle,draw,minimum size=10,fill=lightgray}]
	\node at (75.3,-1.9) [vertex] (v1) {};
	\node at (16.4,-77.2) [vertex] (v2) {};
	\node at (-73.4,-44.5) [vertex] (v3) {};
	\node at (-70.1,51.1) [vertex] (v4) {};
	\node at (21.8,77.5) [vertex] (v5) {};
	\node at (141.9,61.5) [vertex] (v6) {};
	\node at (205.2,-1.8) [vertex] (v7) {};
	\node at (141.9,-65) [vertex] (v8) {};
	\draw (v1) to (v2);
	\draw (v2) to (v3);
	\draw (v3) to (v4);
	\draw (v4) to (v5);
	\draw (v5) to (v1);
	\draw (v1) to (v6);
	\draw (v6) to (v7);
	\draw (v7) to (v8);
	\draw (v1) to (v8);
\end{tikzpicture} & \begin{tikzpicture}[x=0.2mm,y=-0.2mm,inner sep=0.2mm,scale=0.5,thick,vertex/.style={circle,draw,minimum size=10,fill=lightgray}]
\node at (288,476) [vertex] (v1) {};
\node at (370,370) [vertex] (v2) {};
\node at (237,386) [vertex] (v3) {};
\node at (503,388) [vertex] (v4) {};
\node at (578,492) [vertex] (v5) {};
\node at (160,489) [vertex] (v6) {};
\node at (372,129) [vertex] (v7) {};
\node at (422,247) [vertex] (v8) {};
\node at (320,246) [vertex] (v9) {};
\node at (450,477) [vertex] (v10) {};
\path
	(v1) edge (v2)
	(v2) edge (v3)
	(v6) edge (v1)
	(v2) edge (v9)
	(v7) edge (v9)
	(v7) edge (v8)
	(v2) edge (v8)
	(v3) edge (v6)
	(v5) edge (v10)
	(v4) edge (v5)
	(v2) edge (v10)
	(v2) edge (v4)
	;
\end{tikzpicture} \\
			(a) & (b)
		\end{tabular}
		\caption{Graphs which cannot be doubly Eulerian}
		\label{fig:xs12}
	\end{figure}
	
	For possibility (2), the vertices of valency 2 form four paths between the two vertices of valency 4 and so these graphs are uniquely determined by a set of four parameters $0 \leq a \leq b \leq c \leq d$ for the number of valency 2 vertices in each path A, B, C, D respectively. It will also be convenient to regard these paths as ordered sets. Denote the valency 4 vertices by $u$ and $v$ and the graphs by the notation $\Gamma = \Gamma(V,E) = \Gamma(a,b,c,d)$. An example is shown in Figure~\ref{fig:Gamma_abcd}. Let $\rho(x,y)$ be the distance between two vertices $x,y \in V$. These graphs have avoidance index of either 1 or 2. 
	
	\begin{figure}\centering
		\begin{tikzpicture}[x=0.2mm,y=0.2mm,thick,vertex/.style={circle,draw,minimum size=10,inner sep=0,fill=lightgray}]
	\node at (-130,-10) [vertex,label={left:$u$}] (v1) {};
	\node at (110,-10) [vertex,label={right:$v$}] (v2) {};
	\node at (-10,50) [vertex,label={below:$a_1$}] (v3) {};
	\node at (-10,10) [vertex,label={below:$b_1$}] (v4) {};
	\node at (-50,-30) [vertex,label={below:$c_1$}] (v5) {};
	\node at (30,-30) [vertex,label={below:$c_2$}] (v6) {};
	\node at (-70,-70) [vertex,label={below:$d_1$}] (v7) {};
	\node at (-10,-70) [vertex,label={below:$d_2$}] (v8) {};
	\node at (50,-70) [vertex,label={below:$d_3$}] (v9) {};
	\draw (v1) to (v3);
	\draw (v2) to (v3);
	\draw (v1) to (v4);
	\draw (v2) to (v4);
	\draw (v1) to (v5);
	\draw (v5) to (v6);
	\draw (v2) to (v6);
	\draw (v1) to (v7);
	\draw (v7) to (v8);
	\draw (v8) to (v9);
	\draw (v2) to (v9);
\end{tikzpicture}
		\caption{The graph $\Gamma(1,1,2,3)$}
		\label{fig:Gamma_abcd}
	\end{figure}
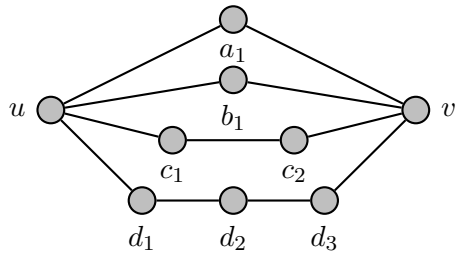
	
	The smallest interesting case is graphs of order 8. From Table~\ref{tab:small} we know that there are 7 doubly Eulerian graphs of this order; the 5 graphs which are 4-regular are edge-maximal and were discussed in Section~\ref{sec:maximal}. The remaining graphs are shown in Figure~\ref{fig:minimal_8}. Graph (a) has edge excess $\xi=2$ and is easily seen to be $\Gamma(0,2,2,2)$ as described above; graph (b) has edge excess $\xi=4$.
	
	\begin{figure}
		\centering
		\begin{tabular}{cc}
			\begin{tikzpicture}[x=0.2mm,y=-0.2mm,inner sep=0.2mm,scale=0.9,thick,vertex/.style={circle,draw,minimum size=10,fill=lightgray}]
\node at (238,310) [vertex] (v1) {};
\node at (334,310) [vertex] (v2) {};
\node at (382,310) [vertex] (v3) {};
\node at (238,358) [vertex] (v4) {};
\node at (334,358) [vertex] (v5) {};
\node at (382,358) [vertex] (v6) {};
\node at (286,262) [vertex] (v7) {};
\node at (286,406) [vertex] (v8) {};
\path
	(v1) edge (v4)
	(v1) edge (v7)
	(v2) edge (v5)
	(v2) edge (v7)
	(v3) edge (v6)
	(v3) edge (v7)
	(v4) edge (v8)
	(v5) edge (v8)
	(v6) edge (v8)
	(v7) edge (v8)
	;
\end{tikzpicture} & \begin{tikzpicture}[x=0.2mm,y=-0.2mm,inner sep=0.2mm,scale=0.8,thick,vertex/.style={circle,draw,minimum size=10,fill=lightgray}]
\node at (260,300) [vertex] (v1) {};
\node at (260,420) [vertex] (v2) {};
\node at (380,360) [vertex] (v3) {};
\node at (440,360) [vertex] (v4) {};
\node at (200,360) [vertex] (v5) {};
\node at (260,360) [vertex] (v6) {};
\node at (380,420) [vertex] (v7) {};
\node at (380,300) [vertex] (v8) {};
\path
	(v1) edge (v5)
	(v1) edge (v6)
	(v1) edge (v7)
	(v1) edge (v8)
	(v2) edge (v5)
	(v2) edge (v6)
	(v2) edge (v7)
	(v2) edge (v8)
	(v3) edge (v7)
	(v3) edge (v8)
	(v4) edge (v7)
	(v4) edge (v8)
	;
\end{tikzpicture} \\
			(a) & (b)
		\end{tabular}
		
		\caption{The graphs of order 8 with avoidance index 2 which are not regular}
		\label{fig:minimal_8}
	\end{figure}
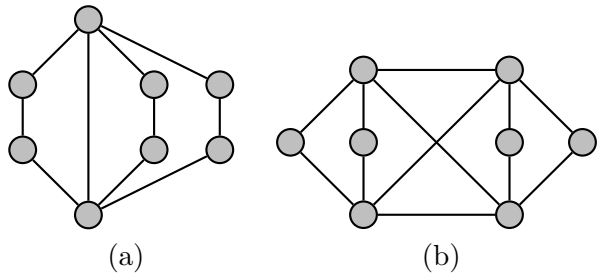
	
	Our investigations suggest that a complete analysis of the values of $a, b, c, d$ for which the graph $\Gamma(a,b,c,d)$ has either index is extremely complex and tedious and breaks into many different cases and subcases. We therefore restrict our attention to the case where $a=0$, i.e. the class of graphs for which the two vertices of valency 4 are adjacent. Two results are easily established.
	
	\begin{lemma}\label{abc3}
		If $d \geq a+b+c+3$ then the graph $\Gamma$ has avoidance index 1.
	\end{lemma}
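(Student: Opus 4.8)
The plan is to exhibit one vertex of $\Gamma=\Gamma(a,b,c,d)$ from which no pair of avoiding Eulerian circuits can start; since every such graph has avoidance index $1$ or $2$, this forces the index to be $1$. (The section works under $a=0$, so $u\sim v$, but this is not needed below.) Let $\ell:=a+b+c+3$ denote the number of edges of the ``core'' $H$ formed by $u$, $v$, and the interior vertices of the three short paths $A,B,C$; in $H$ both $u$ and $v$ have valency $3$ and every other vertex has valency $2$, so every trail of $H$ using all its edges and beginning at $u$ must end at $v$. Label the interior vertices of $D$ by $d_1,\dots,d_d$, so $u\sim d_1$ and $v\sim d_d$; note $|E(\Gamma)|=\ell+d+1$.

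The crucial point is that, starting both circuits from an interior vertex $d_k$ of $D$, each is rigid. The two neighbours of $d_k$ (namely $d_{k-1}$ and $d_{k+1}$, read as $u$ or $v$ if $k=1$ or $k=d$) are non-adjacent, and they are the only options at the first step, so one circuit — call it $C_1$ — leaves $d_k$ towards $u$ and the other, $C_2$, towards $v$. Since the only edges joining $D$ to the rest of $\Gamma$ are $ud_1$ and $vd_d$, a circuit that left $H$ with some core edge still unused would afterwards be forced straight back along $D$ and get stuck at $d_k$; hence each circuit's visit to $H$ is a single block, a complete Eulerian trail of $H$. Therefore $C_1$ runs $d_k,d_{k-1},\dots,d_1,u$ (reaching $u$ at step $k$), then an Eulerian trail of $H$ from $u$ to $v$ (reaching $v$ at step $k+\ell$), then $v,d_d,\dots,d_k$ back to the start; symmetrically $C_2$ reaches $v$ at step $d-k+1$ and then begins an Eulerian trail of $H$ out of $v$, so at step $d-k+2$ it sits at a neighbour of $v$ in $H$, which is a vertex adjacent to $v$ in $\Gamma$.

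It remains to choose $k$. Take $2k:=d-\ell+2$ if $d-\ell$ is even, and $2k:=d-\ell+1$ if $d-\ell$ is odd; in either case $2k$ is even, and from $d\ge \ell$ (with $d-\ell\ge 1$ in the second case, and $\ell\ge 3$) one gets $2\le 2k\le d-1$, so $d_k$ is a bona fide interior vertex of $D$. By this choice $k+\ell$ equals $d-k+2$ in the first case and $d-k+1$ in the second. Hence at step $k+\ell$, at which $C_1$ sits at $v$, the circuit $C_2$ sits at a neighbour of $v$ (first case) or at $v$ itself (second case); either way $C_1$ and $C_2$ are within distance $1$ of each other there, contradicting that $C_1,C_2$ are avoiding. (The step $k+\ell=k+a+b+c+3$, as well as $d-k+1$ and $d-k+2$, lie strictly between $0$ and $|E(\Gamma)|=a+b+c+d+4$, as is immediate from $1\le k\le d-1$.) So $\Gamma$ admits no avoiding pair starting at $d_k$, and its avoidance index is $1$.

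The one point needing thought is the rigidity observation in the second paragraph; it relies only on $d_k$ having exactly two neighbours, on $D$ being attached to the rest of $\Gamma$ solely through the edges $ud_1$ and $vd_d$, and on an all-edge trail of $H$ beginning at $u$ necessarily ending at $v$. After that the proof is just the parity bookkeeping for $k$ and a trivial range check, so I anticipate no real difficulty.
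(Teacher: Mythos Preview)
Your proof is correct. Both you and the paper exploit the same structural fact: from any interior vertex of the long path $D$, the two circuits are forced to head off in opposite directions, and each must traverse the entire core $H$ in a single block (an Eulerian trail between $u$ and $v$) before returning along $D$. The rigidity argument you spell out is sound and does not need $a=0$.

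The difference is only in how the collision is located. The paper fixes the start at $d_1$ (the $D$-vertex adjacent to $u$) and argues by a length count: the circuit heading toward $v$ occupies its first $d{+}1$ steps getting to and past $v$, while the other circuit's last $d{+}1$ steps are the approach to $v$ and the return along $D$; since $|E|=a+b+c+d+4\le 2d+1<2(d+1)$, these two blocks must overlap in time, forcing a meeting on $D$. You instead choose $d_k$ with $k=\lfloor (d-\ell)/2\rfloor+1$ so that the moment $C_1$ arrives at $v$ coincides exactly (up to parity) with $C_2$ being at $v$ or one step past it, and read off the collision directly.

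Your route is a bit longer but has the virtue of pinpointing the exact step and vertex where avoidance fails, with no implicit ``crossing on a path'' reasoning; the paper's version is quicker but leaves that crossing step to the reader. Either way the essential mechanism is identical.
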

	\begin{proof}
		Suppose that $d \geq a+b+c+3$. The four paths A, B, C, D contain $a+1$, $b+1$, $c+1$, $d+1$ edges respectively. So the graph $\Gamma$ contains a total number of edges $\left| E \right| = a+b+c+d+4 \leq 2d+1$. Now consider two Eulerian circuits starting on path D at the 2-valent vertex adjacent to vertex $u$. The circuit starting along the path D in the direction of the vertex $v$ includes $d$ edges to reach $v$ and $d+1$ edges to clear it. Conversely, the final $d$ edges of the other circuit start at vertex $v$ and are along path D, with the preceding edge incident to vertex $v$, also $d+1$ in total. so the circuits can only be avoiding if the length of a circuit is at least $2(d+1)$. As $\left| E \right|  \leq 2d+1$, these circuits are not avoiding and so graph $\Gamma$ has avoidance index 1.
	\end{proof}
	
	\begin{lemma}\label{ca1}
		If $c \leq a+1$ then the graph $\Gamma$ has avoidance index 1.
	\end{lemma}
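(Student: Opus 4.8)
The plan is to single out one vertex of $\Gamma$ from which it is impossible to start an avoiding pair of Eulerian circuits; since avoidance index at least $2$ demands such a pair from \emph{every} vertex, this settles the lemma.

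First I would unpack what $c\le a+1$ means in the setting of this section, where $a=0$. Because $\Gamma$ is simple, at most one of its four defining paths can consist of a single edge, so $a=0$ forces $b\ge 1$; combined with $b\le c\le a+1=1$ this yields $b=c=1$, and of course $d\ge c=1$. Thus $\Gamma=\Gamma(0,1,1,d)$: the two valency-$4$ vertices $u$ and $v$ are joined by an edge, by two internally disjoint paths of length $2$ through interior vertices $b_1$ and $c_1$, and by one path of length $d+1$ through $d_1,\dots,d_d$.

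The heart of the argument is then to start both postmen at $b_1$. The only neighbours of $b_1$ are $u$ and $v$, and these two are adjacent because $a=0$. Consequently any Eulerian circuit beginning at $b_1$ is situated at $u$ or at $v$ after its first step. If $C_1,C_2$ were an avoiding pair starting at $b_1$, then at step $1$ they could not occupy the same vertex (violating criterion (1) of the definition of avoiding) nor could one be at $u$ while the other is at $v$ (violating criterion (2), since $uv\in E$). So $b_1$ admits no avoiding pair, hence $\av(\Gamma)<2$; and as $\Gamma$ is Eulerian, $\av(\Gamma)=1$.

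I do not expect any real obstacle. The only step requiring a moment's thought is the structural reduction: checking that $a=0$ together with $c\le a+1$ and simplicity really do force $b=c=1$, so that the ``pinch vertex'' $b_1$, a degree-$2$ common neighbour of the adjacent pair $\{u,v\}$, genuinely exists. After that the conclusion is immediate from the definition of an avoiding pair, with no case analysis needed.
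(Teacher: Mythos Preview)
Your argument is clean and correct \emph{under the assumption} $a=0$: with $a=0$ and $c\le 1$, simplicity forces $b=c=1$, and then the degree-$2$ vertex $b_1$ has its only two neighbours $u,v$ adjacent, so no avoiding pair can even get started from $b_1$. If the lemma were stated only for the case $a=0$, this would be a slicker proof than the paper's.

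However, the paper proves Lemma~\ref{ca1} for \emph{general} $a\ge 0$; the parameter $a$ in the hypothesis $c\le a+1$ is genuinely free, and the paper's proof treats it as such. (The sentence about ``restricting attention to $a=0$'' just before the two lemmas is loosely placed; both Lemma~\ref{abc3} and Lemma~\ref{ca1} are proved in full generality, and Lemma~\ref{ca1} in particular is relevant to the later discussion of $a\ge 1$ and Theorem~\ref{ba3}.) Your argument does not extend to $a\ge 1$: once $u$ and $v$ are non-adjacent there is no reason any degree-$2$ vertex has adjacent neighbours, so the ``pinch vertex'' trick is unavailable.

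The paper's approach for arbitrary $a$ is as follows. If $d\le a+1$, start both circuits at $u$: every one of the four $u$--$v$ paths has length $a+1$ or $a+2$, so the two circuits first reach $v$ within one step of each other and hence fail to avoid. If $d>a+1$, start instead at the vertex on path $D$ at distance $\lfloor(d-a)/2\rfloor$ from $u$, sending one circuit toward $u$ and the other toward $v$; a short count shows that the first circuit (via $u$ and then one of $A,B,C$) reaches $v$ at step $\lfloor(d+a)/2\rfloor+1$ or $\lfloor(d+a)/2\rfloor+2$, while the second reaches $v$ at step $\lceil(d+a)/2\rceil+1$, again within one step of each other. So in both cases the chosen start vertex admits no avoiding pair.
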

	\begin{proof}
		If $d=a$ or $d=a+1$, then any two Eulerian circuits starting at vertex $u$ first reach vertex $v$ within one step of each other and so are not avoiding.\\
		Now suppose that $d>a+1$. Let $\mathcal{C}_1$ and $\mathcal{C}_2$ be two Eulerian circuits starting at the vertex on path D distant $\lfloor(d-a)/2\rfloor$ from vertex $u$, with $\mathcal{C}_1$ in the direction of $u$ and $\mathcal{C}_2$ in the direction of $v$. Then $\mathcal{C}_2$ reaches vertex $v$ after $d+1-\lfloor(d-a)/2\rfloor= \lceil(d+a)/2\rceil+1$ steps. Now $\mathcal{C}_1$  reaches vertex $u$ after $\lfloor(d-a)/2\rfloor$ steps. It continues from $u$ to $v$ along a path of length $a+1$ or $a+2$ reaching $v$ after a total number of steps either $\lfloor(d+a)/2\rfloor+1$ or $\lfloor(d+a)/2\rfloor+2$. This is always within one step of $\mathcal{C}_2$. Hence the two circuits are not avoiding. 
	\end{proof}
	
	The next theorem, together with the above Lemma \ref{abc3}, provides the complete analysis of the case where $a=0$.
	
	\begin{theorem}\label{a0}
		The graph $\Gamma(a,b,c,d)$ where $a=0$ and $d < a+b+c+3$ has avoidance index 2 if and only if it is bipartite.
	\end{theorem}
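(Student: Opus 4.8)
The plan is to record first what bipartiteness means here, then treat the two implications separately. With $a=0$ the graph $\Gamma(a,b,c,d)$ is the theta-graph of four internally disjoint $u$–$v$ paths $A,B,C,D$ of lengths $1,b+1,c+1,d+1$; such a graph is bipartite precisely when these four lengths share a common parity, and since path $A$ has odd length $1$ this means exactly that $b,c,d$ are all even. Throughout we use the standing hypotheses $a=0$ and $1\le b\le c\le d\le b+c+2$ (the last because $d<a+b+c+3$), and note that by Lemma~\ref{ca1} (the case $c\le a+1=1$) we may assume $c\ge 2$: if $c=1$ then $b=1$, so $\Gamma$ is non-bipartite and of avoidance index $1$, already consistent with the claim. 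When $b,c,d$ are even, $\Gamma$ also carries the fixed-point-free automorphism $\theta$ interchanging $u$ and $v$ and reversing each of $B,C,D$, which we use to reduce the number of starting vertices to be considered.

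For ``bipartite $\Rightarrow$ avoidance index $2$'': since $b,c,d$ are even the graph is bipartite and $m=|E|=b+c+d+4$ is even, so (by the remark in Section~\ref{sec:bipartite}) a pair of Eulerian circuits from a vertex is avoiding iff the two circuits never occupy the same vertex at the same time. We must therefore produce, for each starting vertex $w$, two Eulerian circuits from $w$ that never coincide; by $\theta$ it suffices to take $w=u$ and $w$ in the first half of one of the paths. For $w=u$ the natural move is a shift construction: let $\mathcal C_1$ run through the paths in cyclic order $A,\,B$ reversed$,\,C,\,D$ reversed (so $\mathcal C_1=u,v,b_b,\dots,b_1,u,c_1,\dots,c_c,v,d_d,\dots,d_1,u$), and let $\mathcal C_2$ be the cyclic shift of $\mathcal C_1$ by $b+2$ steps, which again starts and ends at $u$ because $\mathcal C_1$ is at $u$ after $b+2$ steps. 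Since the only doubly-visited vertices are $u$ (at cyclic positions $0,b+2$) and $v$ (at $1,b+c+3$), a coincidence at an interior step would force $b+2$, mod $m$, to equal the gap between the two occurrences of $u$ or of $v$; a short check shows the only case in which this happens is $d=b$, i.e. $b=c=d$. In that exceptional case we replace the pair by the one with cyclic path orders $A,\,C$ reversed$,\,B,\,D$ reversed and $B,\,D$ reversed$,\,C,\,A$ reversed, which one verifies directly never coincide (both start at $u$). For $w$ in the interior of a path, say $w=d_j$ with $1\le j\le d/2$ (the cases where $w$ lies inside $B$ or $C$ being handled the same way, and more easily since $b,c\le d$): the two circuits must leave $d_j$ in opposite directions, so we build $\mathcal C_1$ as ``run $d_j$ to $u$ along $D$, then traverse $A,B,C$ getting from $u$ back to $v$, then run $v$ to $d_j$ along the remainder of $D$'' and $\mathcal C_2$ symmetrically but leaving $d_j$ towards $v$ first, choosing the orders and directions of the $A,B,C$ traversals so that $\mathcal C_2$ is not the reversal of $\mathcal C_1$ (otherwise they coincide at the midpoint of the circuit) and so that the instants at which both circuits sit at $u$, at $v$, or inside a common one of $A,B,C$ do not produce a coincidence; at each such branch at most one of the three path lengths is forbidden, so a legal choice always exists. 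The crucial structural fact is that $\mathcal C_1$ and $\mathcal C_2$ are never simultaneously inside path $D$ after the opening segment, and this is exactly where $d\le b+c+2$ is used: the first $D$-segment of one circuit finishes, in time, before the last $D$-segment of the other begins.

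For ``avoidance index $2\Rightarrow$ bipartite'' we show that if at least one of $b,c,d$ is odd then $\Gamma$ has avoidance index $1$. The basic tool is the rigidity already exploited in Lemmas~\ref{abc3} and~\ref{ca1}: every Eulerian circuit traverses each path atomically, so from a fixed start vertex the times at which a circuit is at $u$ or at $v$ are essentially determined, and whenever one circuit sits at $u$ (resp.\ $v$) its partner must lie in the deep interior of the paths, away from the neighbours $b_1,c_1,d_1$ (resp.\ $b_b,c_c,d_d$). For the small non-bipartite graphs this already gives a contradiction by pigeonhole: starting at a vertex $w$ in the interior of the shortest path, each circuit visits $\{u,v\}$ at four distinct interior instants, at each of which the partner must occupy one of the few interior path-vertices not adjacent to $u$ or $v$, and each of these is available at only one instant, so there are too few of them. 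For the remaining non-bipartite graphs one splits into cases according to which of $b,c,d$ are odd, choosing in each case a start vertex — a suitably placed vertex on path $D$ (as in Lemmas~\ref{abc3},~\ref{ca1}) when the obstruction is a forced near-collision at $v$, and $u$ itself when the short odd cycle formed by path $A$ together with an even-length path among $B,C,D$ forces the two circuits within distance $1$ — and shows the required separation must fail somewhere.

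I expect this last direction to be the hard part: the pigeonhole count closes only the smallest non-bipartite graphs, and pushing the obstruction through all parities of $b,c,d$ with $d\le b+c+2$ requires the more delicate ``forced timing near $u$ and $v$'' estimates together with a genuine, if routine, case split. On the bipartite side the one delicate point is the interior-vertex construction, where the orderings of the $A,B,C$ traversals must be chosen to dodge a bounded number of potential coincidences; there the hypothesis $d\le b+c+2$ does the real work by eliminating every coincidence inside path $D$.
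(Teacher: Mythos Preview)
You have the difficulty of the two directions exactly reversed. The implication ``non-bipartite $\Rightarrow$ avoidance index $1$'' is a one-line argument that you missed: if some path $X\in\{B,C,D\}$ has an odd number $x$ of internal valency-$2$ vertices, then $X$ has a genuine middle vertex $s$, equidistant from $u$ and $v$. Any two Eulerian circuits starting at $s$ are forced (since $s$ has valency $2$) to leave in opposite directions along $X$, so after $(x+1)/2$ steps one is at $u$ and the other at $v$; but $u$ and $v$ are adjacent because $a=0$. That is the whole proof of this direction in the paper. Your proposed machinery of pigeonhole counts, ``forced timing near $u$ and $v$'', and a case split on the parities of $b,c,d$ is unnecessary, and in any case you do not actually carry it out.

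For ``bipartite $\Rightarrow$ avoidance index $2$'' you are on the right track and use the same key reduction as the paper (in a bipartite graph it suffices to keep the two circuits distinct). But the paper simply writes down explicit pairs of circuits---$u\,D\,v\,C\,u\,B\,v\,u$ versus $u\,v\,B\,u\,D\,v\,C\,u$ from $u$, and two analogous explicit schemes from an interior vertex---and checks the four $u$- and $v$-visit times plus the path traversal directions. Your cyclic-shift idea for the start at $u$ is pleasant but forces you into an exceptional case $b=c=d$ that the paper's choice avoids, and your treatment of interior starting vertices (``a legal choice always exists'') is too vague to count as a proof: the paper's verification shows that the inequalities one must check are delicate enough that they deserve to be written out, and indeed two subcases ($x_u+1<x_v$ versus $x_u+1=x_v$) with genuinely different circuit orderings are needed.
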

	\begin{proof}
		First suppose that $\Gamma$ is not bipartite. Then as $a=0$, at least one of the paths B, C, D between vertices $u$ and $v$ must have an odd number of valency 2 vertices. Denote such a path by X. Now consider two circuits starting from the middle vertex of X in opposite directions. They will arrive at vertices $u$ and $v$ after the same number of steps. But $\rho(u,v)=1$, and so these circuits are not avoiding. Hence $\Gamma$ has avoidance index 1.
		
		Now suppose that $\Gamma$ is bipartite. Then  as $a=0$, the paths B, C, D between vertices $u$ and $v$ also have an even number of valency 2 vertices and as $\Gamma$ is simple, $b,c,d \geq 2$.
		
		Consider two Eulerian circuits $\mathcal{C}_1$ and $\mathcal{C}_2$ starting at a common vertex. For $j = 1,2$, define  mappings $T_j$ from the set $\{0,1,\ldots,\left| E \right|\}$ to $V$ such that for $0 \leq i \leq \left| E \right|$, $T_j(i)$ is the vertex in the circuit $j$ reached after $i$ steps. Since $\Gamma$ is bipartite, $T_1(i)$ and $T_2(i)$ will lie in the same partition, so that $\rho(T_1(i), T_2(i))$ is even. Hence if $T_1(i)$ and $T_2(i)$ are distinct then  $\rho(T_1(i),T_2(i)) > 1$. Thus it suffices to show, for every vertex, the existence of a pair of Eulerian circuits $\mathcal{C}_1$, $\mathcal{C}_2$, starting at that vertex such that $T_1(i)$ and $T_2(i)$ are distinct for every $i:0<i<\left| E \right| $.
		
		First consider Eulerian circuits $\mathcal{C}_1$ and $\mathcal{C}_2$ starting at vertex $u$. Suppose that $\mathcal{C}_1$ follows path D to vertex $v$, then path C to vertex $u$, path B to vertex $v$ and finally path A to return to vertex $u$. Meanwhile $\mathcal{C}_2$ follows path A to vertex $v$, then path B to vertex $u$, path D to vertex $v$ and finally path C to return to vertex $u$. Schematically these circuits can be represented by $u$D$v$C$u$B$vu$ and $uv$B$u$D$v$C$u$. First consider the vertices $u$ and $v$. Now $T_1(i)=u$ for $i=d+c+2$ and $T_2(i)=u$ for $i=b+2$. As $d+c > b$, there is no conflict at vertex $u$. For vertex $v$, we have $T_1(i)=v$ for $i=d+1$ and $i=d+c+b+3$, and $T_2(i)=v$ for $i=1$ and $i=b+d+3$. As $1 < d+1 < b+d+3 < d+c+b+3$, there is also no conflict at vertex $v$. Both circuits traverse path C from vertex $v$ and path D from vertex $u$ in the same direction. Since $b \leq c \leq d$, $T_1(i) \in \text{B} \implies T_2(i) \in \text{C} \cup\{v\}$ and $T_2(i) \in \text{B} \implies T_1(i) \in \text{D} \cup\{v\}$. Thus there are no conflicts on any of the paths and hence the two circuits $\mathcal{C}_1$ and $\mathcal{C}_2$ are avoiding.
		
		Now consider Eulerian circuits $\mathcal{C}_1$ and $\mathcal{C}_2$ starting at vertex $s$, an arbitrary valency 2 vertex on path X, where X is one of the paths B, C or D. Let the other two paths of B, C or D be denoted by Y and Z, and let the number of valency 2 vertices in X, Y and Z be $x$, $y$ and $z$ respectively with $y \leq z$. Let $\textnormal{X}_u$ and $\textnormal{X}_v$ be the subpaths from $s$ to $u$ and $v$ respectively, with $x_u$ and $x_v$ valency 2 vertices apart from vertex $s$, so that $x_u+x_v+1=x$. Recall that $x$ is even. Without loss of generality, assume that $x_u<x_v$.
		
		There are two cases to consider. First, the case $x_u+1<x_v$. Suppose that $\mathcal{C}_1$ follows path $\textnormal{X}_u$ to vertex $u$, then path A to vertex $v$, path Z to vertex $u$, path Y to vertex $v$ and finally path $\textnormal{X}_v$ to return to vertex $s$. Meanwhile $\mathcal{C}_2$ follows path $\textnormal{X}_v$ to vertex $v$, then path Z to vertex $u$, path Y to vertex $v$, path A to vertex $u$ and finally path $\textnormal{X}_u$ to return to vertex $s$. Schematically these circuits can be represented by $s\textnormal{X}_uuv\textnormal{Z}u\textnormal{Y}v\textnormal{X}_vs$ and
		$s\textnormal{X}_vv\textnormal{Z}u\textnormal{Y}vu\textnormal{X}_us$.
		Consider the vertices $u$ and $v$. Now $T_1(i)=u$ for $i=x_u+1$ and $i=x_u+z+3$ and $T_2(i)=u$ for $i=x_v+z+2$ and $i=x_v+z+y+4$. As $x_u+1 < x_u+z+3 < x_v+z+2 < x_v+z+y+4$, there is no conflict at vertex $u$. For vertex $v$, we have $T_1(i)=v$ for $i=x_u+2$ and $i=x_u+z+y+4$, and $T_2(i)=v$ for $i=x_v+1$ and $i=x_v+z+y+3$. Now since $d<a+b+c+3$, it follows that $x_v < d < b+c+3 \leq y+z+3 $. Therefore $x_u+2 < x_v+1 < x_u+z+y+4 < x_v+z+y+3$, and so there is also no conflict at vertex $v$. 
		Both circuits traverse path Y from vertex $u$ and path Z from vertex $v$ in the same direction. For the path $\textnormal{X}_u$, we have that  $T_1(i) \in \textnormal{X}_u$ for $1 \leq i \leq x_u$ and $T_2(i) \in \textnormal{X}_u$ for $\left| E \right| - x_u \leq i \leq \left| E \right| - 1$. Since $2x_u < x < \left| E \right| $, these two intervals are disjoint. A similar argument applies for path $\textnormal{X}_v$. Since $2x_v<2d-2<\lvert E\rvert$, again the two intervals are disjoint. Thus there are no conflicts on any of the paths and hence the two circuits $\mathcal{C}_1$ and $\mathcal{C}_2$ are avoiding.
		
		The second case is where $x_u+1=x_v$. In this case the two paths $\textnormal{X}_u$ and $\textnormal{X}_v$ are interchanged in the circuits $\mathcal{C}_1$ and $\mathcal{C}_2$ defined above to give circuits
		$s\textnormal{X}_vvu\textnormal{Z}v\textnormal{Y}u\textnormal{X}_us$ and
		$s\textnormal{X}_uu\textnormal{Z}v\textnormal{Y}uv\textnormal{X}_vs$. 	
		Again consider the vertices $u$ and $v$. Now $T_1(i)=u$ for $i=x_v+2$ and $i=x_v+z+y+4$ and $T_2(i)=u$ for $i=x_u+1$ and $i=x_u+z+y+3$. As $x_u+1 < x_v+2 < x_u+z+y+3 < x_v+z+y+4$, there is no conflict at vertex $u$. For vertex $v$, we have $T_1(i)=v$ for $i=x_v+1$ and $i=x_v+z+3$, and $T_2(i)=v$ for $i=x_u+z+2$ and $i=x_u+z+y+4$. Since $x_v+1 < x_u+z+2 < x_v+z+3 < x_u+z+y+4$, there is also no conflict at vertex $v$. The remainder of the proof is as in the case where $x_u+1<x_v$. Hence the two circuits $\mathcal{C}_1$ and $\mathcal{C}_2$ are avoiding.
	\end{proof}
	
	The above theorem gives a construction for edge-minimal doubly Eulerian graphs for all even orders $n \geq 8$. The graphs have edge excess equal to 2. For $n=10$, in addition to the graph $\Gamma(0,2,2,4)$ given by the above theorem, we find that there is a further doubly Eulerian graph with edge excess equal to 2, namely $\Gamma(1,1,3,3)$, also bipartite. There are three edge-minimal doubly Eulerian graphs on 12 vertices, $\Gamma(0,2,2,6)$, $\Gamma(0,2,4,4)$ and $\Gamma(1,3,3,3)$, again all bipartite. The smallest non-bipartite doubly Eulerian graph of even order and with edge excess equal to 2 occurs when $n=14$. There are two such graphs, $\Gamma(1,2,4,5)$ and $\Gamma(1,3,4,4)$. Bipartite graphs are $\Gamma(0,2,4,6)$, $\Gamma(0,4,4,4)$, $\Gamma(1,3,3,5)$ and $\Gamma(2,2,4,4)$. It remains to consider graphs of odd order.
	
	\begin{theorem}\label{a1b4}
		The graph $\Gamma(1,4,c,d)$ where $d=c \geq 4$ has avoidance index 2.
	\end{theorem}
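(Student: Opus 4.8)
The plan is to mimic the proof of Theorem~\ref{a0}: for every vertex $s$ of $\Gamma=\Gamma(1,4,c,c)$ we will exhibit two Eulerian circuits $\mathcal{C}_1,\mathcal{C}_2$ from $s$, each described by the cyclic order and the directions in which it runs along the four $u$--$v$ paths A, B, C, D (an Eulerian circuit starting at $s=u$ being simply a permutation of the four paths, the directions then being forced; an Eulerian circuit starting at an interior vertex first runs along part of its own path to an endpoint and finishes along the remainder), and then verify that at every intermediate step the two positions are at distance at least $2$. The one substantive difference from Theorem~\ref{a0} is that $\Gamma$ is \emph{not} bipartite --- the cycle $\mathrm{A}\cup\mathrm{B}$ has length $7$ --- so distinctness of the two positions no longer implies distance $\ge 2$ and adjacency must be ruled out directly. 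The helpful structural facts are that the unique shortest $u$--$v$ path is A, so $\rho(u,v)=2$ and the two circuits may safely occupy $u$ and $v$ simultaneously, and that two distinct vertices of $\Gamma$ are adjacent only when they are consecutive on a common path or when one of them is $u$ or $v$ and the other is a neighbour of it --- with the special feature that $a_1$, the lone interior vertex of A, is a neighbour of \emph{both} $u$ and $v$.

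To cut down the work I would first use the automorphisms of $\Gamma$: reversal ($u\leftrightarrow v$, reversing all four paths) and, since $c=d$, the swap of C and D. The resulting orbits on $V(\Gamma)$ are $\{u,v\}$, $\{a_1\}$, $\{b_1,b_4\}$, $\{b_2,b_3\}$ and the sets $\{c_i,d_i,c_{c+1-i},d_{c+1-i}\}$, so it suffices to treat the starting vertices $u$, $a_1$, $b_1$, $b_2$ and $c_i$ for $1\le i\le\lceil c/2\rceil$. In each case the circuits are chosen on the Theorem~\ref{a0} principle: $\mathcal{C}_1$ and $\mathcal{C}_2$ traverse the two long paths C and D \emph{in the same direction}, merely offset in time by a fixed amount of at least $2$, so that they remain at a constant distance whenever both lie on C or both on D, while the two short paths A and B are slotted in so that the times at which a circuit sits at $u$, at $v$, or at $a_1$ are pairwise separated. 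For instance, for $s=u$ one may take
\[
\mathcal{C}_1=u\,\mathrm{B}\,v\,\mathrm{C}\,u\,\mathrm{D}\,v\,\mathrm{A}\,u,\qquad \mathcal{C}_2=u\,\mathrm{A}\,v\,\mathrm{C}\,u\,\mathrm{D}\,v\,\mathrm{B}\,u;
\]
here $\mathcal{C}_2$ enters C (and later D) exactly $3$ steps ahead of $\mathcal{C}_1$, so the two stay at distance $3$ throughout the overlap on each long path, and a routine check of the times of the visits to $u$, $v$ and $a_1$ (valid for all $c\ge4$) shows there is no conflict elsewhere. For a starting vertex interior to path B or C one splits that path into its two sub-paths and uses A (and, respectively, B or D) as the middle portion, again arranging that C and D are run the same way by both circuits with a suitable offset.

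The routine but genuinely delicate part is the bookkeeping: for each starting vertex one records the step indices at which $\mathcal{C}_1$ and $\mathcal{C}_2$ reach $u$, $v$ and $a_1$, together with the path and direction each occupies on every intervening interval, and then checks a fixed finite list of inequalities --- no coincidence of the two positions, no ``one circuit at $u$ (or $v$, or $a_1$) while the other is adjacent to it'', and distance $\ge 2$ whenever both circuits lie on a common path. I expect the main obstacle to be making all of these inequalities hold \emph{simultaneously and uniformly in} $c=d\ge 4$ --- in particular keeping the two circuits at least two apart on the shared long paths (which forces the offset and is where the hypothesis $c\ge 4$ is used) and controlling the vertex $a_1$, which, being adjacent to both $u$ and $v$, must never be visited by one circuit while the other is at either $u$ or $v$. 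It may be cleanest to dispose of $c=4$ (and possibly $c=5$) as explicit base cases and present the parametric construction for $c\ge 6$, as was done for small orders in Theorem~\ref{thm:oddncirc}.
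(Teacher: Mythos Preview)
Your proposal is correct and follows essentially the same approach as the paper: exploit the reversal and $\mathrm{C}\leftrightarrow\mathrm{D}$ symmetries to reduce to the starting vertices $u,a_1,b_1,b_2$ and $c_i$ for $1\le i\le\lfloor c/2\rfloor$, then for each of these exhibit an explicit pair of circuits (with C and D traversed in the same direction by both, at a fixed offset) and verify avoidance by listing the step indices at which $u$ and $v$ are visited. Your example pair for $s=u$ is exactly the paper's pair with $\mathcal{C}_1,\mathcal{C}_2$ interchanged.

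Two small differences worth noting. First, the paper manages with a single parametric construction valid for all $c\ge 4$, so no separate base cases $c=4,5$ are needed; the inequalities go through uniformly. Second, for the starting vertex $c_i$ the paper does \emph{not} use one construction for all $i$ but splits according to whether $i\le\lfloor (c-3)/2\rfloor$ or $i\ge\lfloor (c-1)/2\rfloor$, with a different choice of $\mathcal{C}_2$ in the two ranges --- so the case analysis you anticipate materialises as a split over $i$ rather than over $c$. You are also right to flag the non-bipartite complication and the special role of $a_1$; the paper's proof is somewhat terse here, simply asserting that the tabulated step counts make the distance-$2$ verification ``straightforward'', so your more explicit plan to track $a_1$ and rule out adjacency directly is, if anything, more careful than the paper's own write-up.
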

	\begin{proof}
		Counting from vertex $u$ to vertex $v$, let the valency 2 vertices on paths A, B, C, D be $a$, $b_1$, $b_2$, $b_3$, $b_4$, $c_1, \ldots, c_m$, $d_1, \ldots, d_m$. By symmetry, we only need to consider the cases with Eulerian circuits starting at $u$, $a$, $b_i,~i=1,2$ and $c_i,~1 \leq i \leq \lfloor m/2 \rfloor$. Below in tabular form are avoiding Eulerian circuits together with the number of steps to reach the vertices $u$ and $v$. From this information, it is straightforward to verify that the circuits are always distance at least 2 apart and so are avoiding using similar arguments as employed in the proof of the previous theorem.
		
		\begin{tabular}{rcccccccccc}
			\textnormal{Vertex} $u$ & Circuit $\mathcal{C}_1$ & $u$ & $a$ & $v$ & \textnormal{C} & $u$ & \textnormal{D} & $v$ & \textnormal{B} & $u$\\
			~ & \# steps & 0 & ~ & 2 & ~ & $m+3$ & ~ & $2m+4$ & ~ & $2m+9$\\
			~&&&&&&&&&&\\
			~ & Circuit $\mathcal{C}_2$ & $u$ & \textnormal{B} & $v$ & \textnormal{C} & $u$ & \textnormal{D} & $v$ & $a$ & $u$\\
			~ & \# steps & 0 & ~ & 5 & ~ & $m+6$ & ~ & $2m+7$ & ~ & $2m+9$\\
                ~&&&&&&&&&&\\
		\end{tabular}
  
		\begin{tabular}{ccccccccccc}
			\textnormal{Vertex} $a$ & Circuit $\mathcal{C}_1$ & $a$ & $u$ & $\textnormal{C}$ & $v$ & \textnormal{D} & $u$ & \textnormal{B} & $v$ & $a$\\
			~ & \# steps & 0 & 1 & ~ & $m+2$ & ~ & $2m+3$ & ~ & $2m+8$ & $2m+9$\\
			~&&&&&&&&&&\\
			~ & Circuit $\mathcal{C}_2$ & $a$ & $v$ & \textnormal{B} & $u$ & \textnormal{C} & $v$ & \textnormal{D} & $u$ & $a$\\
			~ & \# steps & 0 & 1 & ~ & 6 & ~ & $m+7$ & ~ & $2m+8$ & $2m+9$\\
                ~&&&&&&&&&&\\
		\end{tabular}
		
		\begin{tabular}{ccccccccccc}
			\textnormal{Vertex} $b_i$ & Circuit $\mathcal{C}_1$ & $b_i$ & $u$ & \textnormal{C} & $v$ & \textnormal{D} & $u$ & $a$ & $v$ & $b_i$\\
			~ & \# steps & 0 & $i$ & ~ & $m+1+i$ & ~ & $2m+2+i$ & ~ & $2m+4+i$ & $2m+9$\\
			~&&&&&&&&&&\\
			~ & Circuit $\mathcal{C}_2$ & $b_i$ & $v$ & $a$ & $u$ & \textnormal{C} & $v$ & \textnormal{D} & $u$ & $b_i$\\
			~ & \# steps & 0 & $5-i$ & ~ & $7-i$ & ~ & $m+8-i$ & ~ & $2m+9-i$ & $2m+9$\\
                ~&&&&&&&&&&\\
		\end{tabular}
		
		~\\For $1 \leq i \leq \lfloor (m-3)/2 \rfloor$,\\
		\begin{tabular}{ccccccccccc}
			\textnormal{Vertex} $c_i$ & Circuit $\mathcal{C}_1$ & $c_i$ & $u$ & \textnormal{D} & $v$ & \textnormal{B} & $u$ & $a$ & $v$ & $c_i$\\
			~ & \# steps & 0 & $i$ & ~ & $m+1+i$ & ~ & $m+6+i$ & ~ & $m+8+i$ & $2m+9$\\
			~&&&&&&&&&&\\
			~ & Circuit $\mathcal{C}_2$ & $c_i$ & $v$ & \textnormal{B} & $u$ & \textnormal{D} & $v$ & $a$ & $u$ & $c_i$\\
			~ & \# steps & 0 & $m+1-i$ & ~ & $m+6-i$ & ~ & $2m+7-i$ & ~ & $2m+9-i$ & $2m+9$\\
                ~&&&&&&&&&&\\
		\end{tabular}
		
		~\\For $\lfloor (m-1)/2 \rfloor \leq i \leq \lfloor m/2 \rfloor$,\\
		\begin{tabular}{ccccccccccc}
			\textnormal{Vertex} $c_i$ & Circuit $\mathcal{C}_1$ & $c_i$ & $u$ & \textnormal{D} & $v$ & \textnormal{B} & $u$ & $a$ & $v$ & $c_i$\\
			~ & \# steps & 0 & $i$ & ~ & $m+1+i$ & ~ & $m+6+i$ & ~ & $m+8+i$ & $2m+9$\\
			~&&&&&&&&&&\\
			~ & Circuit $\mathcal{C}_2$ & $c_i$ & $v$ & $a$ & $u$ & \textnormal{D} & $v$ & \textnormal{B} & $u$ & $c_i$\\
			~ & \# steps & 0 & $m+1-i$ & ~ & $m+3-i$ & ~ & $2m+4-i$ & ~ & $2m+9-i$ & $2m+9$\\
                ~&&&&&&&&&&\\
		\end{tabular}
	\end{proof}
	
	The above theorem gives a construction for edge-minimal doubly Eulerian graphs for all odd orders $n \geq 15$. Again the graphs have edge excess equal to 2. We have already noted that there are no double Eulerian graphs of order 3, 5 or 7. The next case $n=9$ is of particular interest. There are no doubly Eulerian graphs with edge excess equal to 2. Edge-minimal graphs of this order have edge excess equal to 3; there are two such graphs, illustrated in Figure~\ref{fig:n9e12}. For the cases of $n=11$ and $n=13$, the graphs $\Gamma(1,2,3,3)$ and $\Gamma(1,2,4,4)$ respectively are the unique edge-minimal doubly Eulerian graphs of these orders.
	
	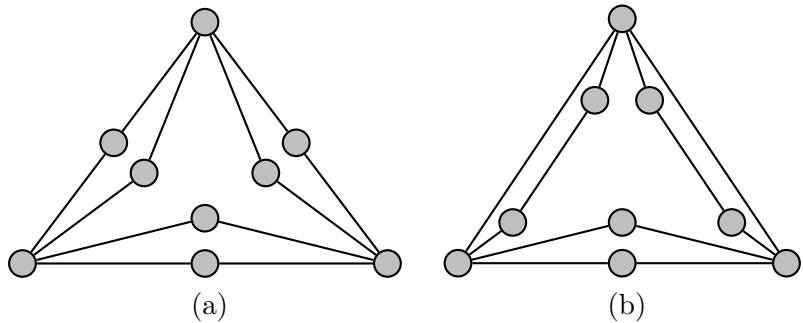
\begin{figure}[h]
		\centering
		\begin{tabular}{cc}
			\begin{tikzpicture}[x=0.2mm,y=-0.2mm,inner sep=0.2mm,scale=1,thick,vertex/.style={circle,draw,minimum size=10,fill=lightgray}]
\node at (320,320) [vertex] (v1) {};
\node at (340,340) [vertex] (v2) {};
\node at (380,400) [vertex] (v3) {};
\node at (380,370) [vertex] (v4) {};
\node at (440,320) [vertex] (v5) {};
\node at (420,340) [vertex] (v6) {};
\node at (260,400) [vertex] (v7) {};
\node at (380,240) [vertex] (v8) {};
\node at (500,400) [vertex] (v9) {};
\path
	(v1) edge (v7)
	(v1) edge (v8)
	(v2) edge (v7)
	(v2) edge (v8)
	(v3) edge (v7)
	(v3) edge (v9)
	(v4) edge (v7)
	(v4) edge (v9)
	(v5) edge (v8)
	(v5) edge (v9)
	(v6) edge (v8)
	(v6) edge (v9)
	;
\end{tikzpicture} &  \begin{tikzpicture}[x=0.2mm,y=-0.2mm,inner sep=0.2mm,scale=0.9,thick,vertex/.style={circle,draw,minimum size=10,fill=lightgray}]
\node at (240,410) [vertex] (v1) {};
\node at (400,410) [vertex] (v2) {};
\node at (320,410) [vertex] (v3) {};
\node at (320,440) [vertex] (v4) {};
\node at (300,320) [vertex] (v5) {};
\node at (340,320) [vertex] (v6) {};
\node at (200,440) [vertex] (v7) {};
\node at (440,440) [vertex] (v8) {};
\node at (320,260) [vertex] (v9) {};
\path
	(v1) edge (v5)
	(v1) edge (v7)
	(v2) edge (v6)
	(v2) edge (v8)
	(v3) edge (v7)
	(v3) edge (v8)
	(v4) edge (v7)
	(v4) edge (v8)
	(v5) edge (v9)
	(v6) edge (v9)
	(v7) edge (v9)
	(v8) edge (v9)
	;
\end{tikzpicture} \\
			(a) & (b)
		\end{tabular}
		\caption{The two doubly Eulerian graphs of order 9 and edge excess 3}
		\label{fig:n9e12}
	\end{figure}
	
	Theorem~\ref{a1b4} is a specific case of the more general theorem below which we give without proof. 
 	\begin{theorem}\label{ba3}
		The graph $\Gamma(a,b,c,d)$ where $a \geq 1$, $b \geq a+3$ and $d < a+b+c+3$ has avoidance index~2.
	\end{theorem}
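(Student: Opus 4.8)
The plan is to follow the pattern of Theorem~\ref{a1b4}, now allowing all four path lengths to vary. Write the paths $A,B,C,D$ as ordered sets of valency-$2$ vertices $a_1,\dots,a_a$, $b_1,\dots,b_b$, $c_1,\dots,c_c$, $d_1,\dots,d_d$, indexed from $u$ towards $v$; since $b\ge a+3\ge 4$ every path has at least one interior vertex, and in particular $\rho(u,v)=a+1\ge 2$. The structural observation that limits what must be checked is that two distinct vertices of $\Gamma$ are adjacent only if they are consecutive vertices of a common path, or one of them is $u$ (resp.\ $v$) and the other is the interior vertex nearest $u$ (resp.\ $v$) on some path. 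Hence a pair of Eulerian circuits is avoiding as soon as at no intermediate step are the two circuits at the same vertex, at two consecutive vertices of a single path, or at $u$ (resp.\ $v$) with one of them at a near-$u$ (resp.\ near-$v$) interior vertex; because $a\ge1$ we have $u\not\sim v$, so no interaction between $u$ and $v$ themselves needs to be considered. This is exactly the feature that makes bipartiteness unnecessary here, in contrast with the $a=0$ situation of Theorem~\ref{a0}.

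Next I would invoke the automorphism $\sigma$ of $\Gamma$ that interchanges $u$ and $v$ and reverses each of the four paths (sending the $i$-th interior vertex of a path counted from $u$ to the $i$-th counted from $v$). Using $\sigma$, it suffices to exhibit an avoiding pair of Eulerian circuits starting from each of: the vertex $u$; and the interior vertices lying in the ``$u$-half'' of each of $A,B,C,D$ (when a path has odd length its central vertex is fixed by $\sigma$ and is handled once). Recall that every Eulerian circuit of $\Gamma$ from a given vertex is determined by the cyclic order in which it traverses the four paths: from $u$ it has the form $u\,P_1\,v\,P_2\,u\,P_3\,v\,P_4\,u$ for a permutation $(P_1,P_2,P_3,P_4)$ of $(A,B,C,D)$, and from an interior vertex $s$ of a path $X$ it has the form $s\,X_u\,u\,P_1\,v\,P_2\,u\,P_3\,v\,X_v\,s$, where $X_u,X_v$ are the two halves of $X$ and $(P_1,P_2,P_3)$ is a permutation of the remaining three paths.

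For each of these starting vertices I would then present such a pair of circuits, together with a table of the steps at which each circuit reaches $u$ and $v$, precisely in the tabular style of Theorem~\ref{a1b4}. The template is to let $\mathcal C_1$ traverse the two longest paths $C$ and $D$ as its first two legs and $\mathcal C_2$ traverse them as its last two legs (or a $\sigma$-mirror of this), so that $C$ and $D$ are each traversed by both circuits in the same direction; the time offset between the two traversals of each long path is then a fixed linear expression in $a,b,c,d$ and the start index $i$, and the hypotheses $b\ge a+3$ and $d<a+b+c+3$ are exactly what force that offset to lie between $2$ and (path length)$-2$ in every case, so the circuits never coincide or become adjacent while on a common path. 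The short paths $A$ and $B$ are arranged to be used by the two circuits during disjoint time intervals. What then remains is a finite, mechanical verification: for start vertices $u$, $a_i$, $b_i$ (and the low range of indices on $C$, $D$) a handful of strict inequalities among the tabulated arrival times, and for the long paths a split into the same two subcases as in Theorem~\ref{a1b4} (according to whether the start index is below or above roughly half the path length), with the near-$u$/near-$v$ adjacency conditions checked using $\rho(u,v)=a+1\ge2$ together with $b\ge a+3$.

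The step I expect to be the main obstacle is the bookkeeping for the starting vertices on paths $C$ and $D$: there the two circuits run along the longest paths with the tightest offsets, and one must ensure that whenever one circuit sits at $u$ or $v$ the other is genuinely at distance at least $2$ from that vertex rather than on the adjacent interior vertex. This is where the constant $3$ in the hypothesis $b\ge a+3$, and the bound $d<a+b+c+3$ (which, as in Lemma~\ref{abc3}, prevents one circuit from lapping the other on $D$), are used. Choosing the path orders so that all of these offsets come out correctly at the same time, uniformly in $a,b,c,d$, is the delicate part; once the circuits are fixed, each individual check is routine linear arithmetic.
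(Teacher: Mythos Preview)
The paper does not actually prove this theorem: immediately after stating it, the authors write only that it ``follows similar arguments as employed in the proof of [Theorem~\ref{a1b4}], but is both lengthy and tedious,'' and they explicitly present it ``without proof.'' Your proposal is exactly this indicated approach: generalise the explicit circuit constructions and tabular step-count verifications of Theorem~\ref{a1b4} to arbitrary $a,b,c,d$ satisfying the hypotheses. Your additional organising observations---the $u\leftrightarrow v$ automorphism $\sigma$ halving the set of start vertices, and the adjacency characterisation explaining why bipartiteness is not needed once $a\geq 1$ (so $u\not\sim v$)---are sound and helpful, though the paper does not make them explicit. So at the level of strategy there is nothing to compare: your plan and the paper's indicated plan coincide.

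That said, your proposal, like the paper, stops short of an actual proof. You describe a template (long paths $C,D$ first in $\mathcal C_1$, last in $\mathcal C_2$, so they are traversed in the same direction with a controlled offset) and you correctly flag the main obstacle as the bookkeeping for start vertices on $C$ and $D$, where the offsets are tightest and one must rule out the ``one circuit at $u$ or $v$, the other at an adjacent interior vertex'' configuration. But you do not exhibit the circuit pairs or carry out any of the inequality checks; you say only that once the orders are chosen ``each individual check is routine linear arithmetic.'' This is precisely the ``lengthy and tedious'' part the authors declined to write out. In particular, a single uniform template of the kind you sketch may well not suffice: already in the special case of Theorem~\ref{a1b4} two different orderings of the three remaining paths are needed for start vertices $c_i$ depending on $i$, and in the general setting one should expect further subcase splits governed by the relative sizes of $a,b,c,d$. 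So your proposal is a credible plan matching the paper's own level of detail, but it is not yet a proof, and the claim that one fixed template handles all start vertices is optimistic.
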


 This follows similar arguments as employed in the proof of that theorem, but is both lengthy and tedious. This leaves the case where $a \geq 1$ and $a \leq b \leq a+2$, but as stated above we find that this is very complex and divides into a number of different cases and subcases.

	\section{Avoidance index}\label{sec:index}
	We have concentrated so far on graphs which admit a pair of avoiding Eulerian circuits starting at any vertex. A natural extension of this idea is to ask for more than two such circuits at every vertex; as we defined earlier, the maximum number $k$ such that a graph $G$ admits a set of $k$ mutually avoiding Eulerian circuits from any vertex is called the \emph{avoidance index} $\av(G)$. Some simple bounds on the avoidance index are provided by the following lemma.
	
	\begin{lemma}\label{lem:avbounds}
		Let $G$ be an Eulerian graph of order $n$, minimum degree $\delta$ and maximum degree $\Delta$. Then:
		\begin{enumerate}[label=(\roman*),itemsep=0px,topsep=0px]
			\item $\av(G)\leq \delta$;
			\item $\av(G)\leq \displaystyle\min_{v\in V(G)}\alpha(N(v))$, where $\alpha(N(v))$ is the independence number of the subgraph induced by the neighbours of $v$;
			\item If $\Delta=n-1$ then $\av(G)=1$, else	$\av(G)\leq \displaystyle\min_{v\in V(G)}\dfrac{S(v)-2}{\deg(v)}+1$, where $S(v)$ is the sum of the degrees of the non-neighbours of $v$ (other than itself) in $G$. In particular, if $\Delta\leq n-2$ then $\av(G)\leq n-\Delta-1$.
		\end{enumerate}
	\end{lemma}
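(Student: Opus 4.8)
The plan is to track, for a hypothetical family of $k=\av(G)$ mutually avoiding Eulerian circuits, where those circuits sit at a common time step, and to play the two avoidance criteria off against the degree sequence. Parts (i) and (ii) come from inspecting the circuits immediately after their first edge, while part (iii) is a counting argument comparing the visits to a fixed vertex with the visits to its non-neighbourhood.

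For (ii), fix a vertex $v$ and take the $k$ circuits all starting (and ending) at $v$ (possible since $\av(G)=k$ means a family of $k$ exists from \emph{any} vertex). After one step they occupy $k$ vertices of $N(v)$; these are pairwise distinct by avoidance criterion~(1) and pairwise non-adjacent by criterion~(2), so they form an independent set of size $k$ in the subgraph induced by $N(v)$. Hence $k\le\alpha(N(v))$ for every $v$, which is (ii); part (i) is the weakening $\alpha(N(v))\le\deg(v)$ followed by minimising over $v$.

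For (iii): if $\Delta=n-1$, the vertex of degree $n-1$ has even degree, so $n$ is odd, and Lemma~\ref{oddnvaln-1} says $G$ is not doubly Eulerian; since any Eulerian graph has $\av(G)\ge1$, we get $\av(G)=1$. Otherwise $\Delta\le n-2$, so every vertex has a non-neighbour. Fix $v$, pick a non-neighbour $u$ of $v$, and take $k$ mutually avoiding Eulerian circuits starting and ending at $u$. Write $\overline{N}(v)=V\setminus(N(v)\cup\{v\})$. Whenever one circuit is at $v$, criteria~(1) and~(2) force the other $k-1$ circuits to lie in $\overline{N}(v)$; moreover no two circuits can be at $v$ simultaneously, and since $u\neq v$ every one of a circuit's $\deg(v)/2$ visits to $v$ is interior. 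Thus the number of pairs (interior step, circuit) at which the circuit lies in $\overline{N}(v)$ is at least $\tfrac{k\deg(v)}{2}(k-1)$. On the other hand a single circuit visits each $w\in\overline{N}(v)$ exactly $\deg(w)/2$ times, and because $u\in\overline{N}(v)$ accounts for the non-interior start and end, a single circuit is in $\overline{N}(v)$ at exactly $S(v)/2-1$ interior steps, so that same count equals $k\bigl(S(v)/2-1\bigr)$. Comparing and cancelling $k$ gives $k\le(S(v)-2)/\deg(v)+1$; minimising over $v$ completes the bound. For the final assertion, apply this with $v$ of degree $\Delta$: it has $n-1-\Delta$ non-neighbours of degree at most $\Delta$, so $S(v)\le(n-1-\Delta)\Delta$ and the bound is at most $n-\Delta-2/\Delta<n-\Delta$, whence $\av(G)\le n-\Delta-1$ by integrality.

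The routine items are the visit counts and the integrality step at the end. The point that actually needs thought in (iii) is the choice of common start vertex: starting the circuits at $v$ itself yields only the weaker inequality $k\le S(v)/(\deg(v)-2)+1$, and it is starting instead at a \emph{non-}neighbour $u$ of $v$ — so that $u$'s start/end visit subtracts the crucial $1$ from the $\overline{N}(v)$ count — that produces exactly the stated bound.
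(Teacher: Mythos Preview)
Your proof is correct and follows essentially the same approach as the paper: parts (i) and (ii) by inspecting the circuits one step after the start vertex, and part (iii) by the same double count comparing the $k\cdot\deg(v)/2\cdot(k-1)$ forced visits to $\overline{N}(v)$ against the exact total $k(S(v)/2-1)$ available from circuits started at a non-neighbour $u$. Your closing remark explaining why the common start vertex must be taken in $\overline{N}(v)$ (so that the start/end subtracts the crucial $1$) is a helpful observation that the paper leaves implicit.
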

	\begin{proof}
		Parts (i) and (ii) are immediate since at the first step after the starting vertex, all circuits must be at distinct mutually nonadjacent vertices. The first part of (iii) follows from Lemma~\ref{oddnvaln-1}; note that $n$ must be odd in this case. For the second part, assume $\Delta\leq n-2$; consider a vertex $v$ and suppose we have $k$ mutually avoiding Eulerian circuits beginning and ending at a vertex $u$ which is not adjacent to $v$. Each of these circuits visits $v$ exactly $\deg(v)/2$ times, for a total of $k\deg(v)/2$ such visits across all the circuits. On each of these visits, at the same step the other $k-1$ circuits must all be at some non-neighbour of $v$. In any circuit, the number of visits to a non-neighbour $w$ of $v$, $w\neq u$, is $\deg(w)/2$, and the number of visits to $u$ (excluding the start and end steps) is $\deg(u)/2-1$. So the largest possible number of visits to a non-neighbour of $v$ across all $k$ circuits, excluding at the start and end points, is $k(S(v)/2-1)$. Thus $(k-1)k\deg(v)/2\leq k(S(v)/2-1)$ and the result follows. The final part of (iii) follows by considering a vertex $v$ of degree $\Delta$; in this case $v$ has $n-\Delta-1$ non-neighbours and so $S(v)\leq (n-\Delta-1)\Delta$.
	\end{proof}

	Lemma~\ref{lem:avbounds}(iii) shows that in the case of a $d$-regular Eulerian graph of order $2d$, the maximum possible value of the avoidance index is $d-1$. Our next result shows that this upper bound can be attained for all even $d\geq 2$.

	\begin{theorem}\label{thm:K2s2s}
		Let $G$ be the complete bipartite graph $K_{2s,2s}$, $s\geq 1$. Then $\av(G)=2s-1$.
	\end{theorem}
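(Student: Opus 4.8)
The plan is to prove both directions. The upper bound $\av(K_{2s,2s})\le 2s-1$ is immediate: $K_{2s,2s}$ is $2s$-regular on $4s$ vertices, so it is a $d$-regular Eulerian graph on $2d$ vertices with $d=2s$, and Lemma~\ref{lem:avbounds}(iii) gives $\av\le d-1=2s-1$. (Alternatively, Lemma~\ref{lem:avbounds}(ii) applies, since the neighbourhood of any vertex is an independent set of size $2s$, while a slightly more careful count is needed to shave it down to $2s-1$; the regular-graph-on-$2d$-vertices bound is cleaner.) So the substance is the lower bound: exhibit $2s-1$ mutually avoiding Eulerian circuits starting and ending at an arbitrary vertex.

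Let the partition classes be $X=\Z_{2s}$ and $Y=\Z_{2s}$, with every $x\in X$ adjacent to every $y\in Y$, and write the vertices as $x_i$ and $y_j$. Because the graph is vertex-transitive (indeed edge-transitive), it suffices to construct the circuits starting at $x_0$. The key observation is that $K_{2s,2s}$ is bipartite, so criterion~(2) in the definition of avoiding is automatic: two circuits are avoiding as soon as $T_r(i)\ne T_{r'}(i)$ for all $r\ne r'$ and all intermediate steps $i$, since distinct vertices in the same part are at distance $2$ and a vertex in $X$ at step $i$ can only coincide with another vertex in $X$ (all circuits are at an $X$-vertex on even steps and a $Y$-vertex on odd steps). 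So I only need to keep the circuits pointwise distinct away from the endpoints.

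The construction I would use is a "rotation" or "shift" scheme. Fix one Eulerian circuit $C$ of $K_{2s,2s}$ starting and ending at $x_0$; for a shift parameter $t\in\Z_{2s}$, let $C^{(t)}$ be the circuit obtained by replacing every vertex $x_i$ by $x_{i+t}$ and every $y_j$ by $y_{j+t}$ (arithmetic mod $2s$). Each $C^{(t)}$ is again an Eulerian circuit (the shift is an automorphism of $K_{2s,2s}$), and $C^{(0)},C^{(1)},\dots,C^{(2s-1)}$ are pairwise distinct at every step provided the base circuit $C$ has the property that at no intermediate step do two of its shifts agree — equivalently, if $C=v_0,v_1,\dots,v_m$ with $v_i\in\{x_{f(i)}\}$ or $\{y_{g(i)}\}$, then I need $f(i)$ (resp.\ $g(i)$) to behave well under shifting. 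The cleanest sufficient condition is that $C$ visits, at each even step $2\ell$, an $X$-vertex, and these are "staggered" so that $C^{(t)}$ and $C^{(t')}$ never coincide. But note I want only $2s-1$ shifts, not all $2s$, because all $2s$ shifts starting from the \emph{same} physical vertex $x_0$ is impossible — the shifted circuits start at $x_t$, not $x_0$. So the real plan is: take a base circuit $C$ through $x_0$, and for $t=1,\dots,2s-1$ produce a companion circuit that (i) also starts and ends at $x_0$, (ii) is "parallel" to the $t$-shift of $C$ in the interior, and (iii) is patched at the two ends exactly as in the proofs of Theorems~\ref{thm:3mod6} and \ref{thm:max0mod4}: decompose $C$ as $x_0,\,P,\,x_0,\,Q,\,x_0$ at its first return to $x_0$, shift the middle, and splice a short detour through a vertex of $Y$ to reconcile the start/end label. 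Doing this simultaneously and consistently for all $t$ is the content of the theorem.

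The main obstacle — and the step I would spend the most care on — is verifying that after the end-patching, the $2s-1$ circuits remain pairwise distinct at \emph{every} intermediate step, including the patched initial and final segments where the clean "shift by $t$" relation is broken. Interior steps are handled uniformly by the shift structure (with a concrete choice of base circuit, e.g.\ the standard lexicographic Euler tour of $K_{2s,2s}$, one checks that shifts never collide); the endpoints require a small separate argument showing the $O(1)$-length repair paths for different values of $t$ stay apart, using that $Y$ has $2s$ vertices so there is room to route $2s-1$ distinct short detours. I would organise the write-up as: (1) quote the upper bound from Lemma~\ref{lem:avbounds}(iii); (2) set up labels and the bipartite reduction to criterion~(1); (3) give the explicit base circuit and the shift family; (4) perform the end-repair producing circuits from $x_0$; (5) check pairwise distinctness in the interior via the shift, then at the endpoints by direct inspection.
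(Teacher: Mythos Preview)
Your upper bound is fine and matches the paper: Lemma~\ref{lem:avbounds}(iii) with $\Delta=2s$, $n=4s$ gives $\av(G)\le 2s-1$ immediately.

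The lower bound plan, however, has a real gap at exactly the place you flag as the ``main obstacle''. The shift family $C^{(0)},\dots,C^{(2s-1)}$ is indeed pairwise distinct at every step (trivially: the shift is a free automorphism action on vertices), but these circuits start at $x_0,x_1,\dots,x_{2s-1}$ respectively, and the whole difficulty is forcing $2s-1$ of them to start at the \emph{same} vertex $x_0$ while staying mutually distinct at every intermediate step. Your proposed repair---decompose, shift the middle, splice---is not a small $O(1)$ fix that one checks by inspection. Concretely: if you swap the endpoint label $x_t\leftrightarrow x_0$ in $C^{(t)}$ at some intermediate step $k_t$ (the method of Theorems~\ref{thm:3mod6} and~\ref{thm:max0mod4}), then at step $k_t$ the modified $C^{(t)}$ sits at $x_t$, while the unmodified $C^{(2t)}$ sits at $x_{2t-t}=x_t$ as well, so they collide. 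The two-circuit trick does not scale naively to $2s-1$ circuits; you need a repair that is coordinated across all values of $t$ simultaneously, and that is the content of the theorem, not a footnote.

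The paper does not attempt an endpoint patch of shifted copies. Instead it encodes each Eulerian circuit as a numbering $1,\dots,4s^2$ of the cells of a $2s\times 2s$ array (rows and columns indexed by the two colour classes), where consecutive numbers alternate between same-row and same-column moves. One explicit ``zig-zag'' base numbering is written down; then $2s-1$ arrays $E_1,\dots,E_{2s-1}$ are produced by a combined operation: a diagonal cyclic shift (row~0 moves right; all other rows move right \emph{and} up), followed by swapping the non-endpoint entries of row~0 with those of row~$i$. The construction is arranged so that in every $E_i$ the entries $1$ and $4s^2$ stay in row~0 (so all circuits start and end at $x_0$), while every other number lies in a different row and a different column across the $E_i$ (so the circuits are pairwise vertex-distinct at every intermediate step). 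The ``row-swap with row~$i$'' is the global repair that your sketch is missing; it is not an $O(1)$ local detour but a rearrangement of $\Theta(s)$ entries, and its correctness is what makes the construction work.
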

	
	\begin{proof}
		In the notation of Lemma~\ref{lem:avbounds}, $\delta=\Delta=\alpha=2s$. Therefore $\av(G)\leq 2s-1$. We prove that $\av(G)=2s-1$. The result is trivially true for $s=1$. So assume that $s\geq 2$. Represent the two partitions of the vertices of $G$  as the rows and columns respectively of a square $Q$ of side $2s$, both indexed by the set $\{0,1,2,\ldots,2s-1\}$. The entries in the square then represent the edges of the graph. We number these as follows.
		
		$Q(0,0)=1$, $Q(0,1)=4s$, $Q(1,0)=2$, $Q(1,1)=3$.
		
		$Q(2i,1)=4i$, $Q(2i,2)=4i+1$, $Q(2i+1,2)=4i+2$, $Q(2i+1,1)=4i+3$, $1\leq i\leq s-1$.
		
		$Q(i,j)=Q(i,j-2)+4s$, $0\leq i\leq 1$, $2\leq j\leq 2s-1$ and $2\leq i\leq 2s-1$, $3\leq j\leq 2s$.
		
		Arithmetic in the indices of the square is performed modulo $2s$.
		
		The numbers form a ``zig-zag'' pattern in the square, with each number being alternately in the same column or same row as its predecessor. (The ordering for the case $s=3$ is shown in the following example.) They therefore give an Eulerian circuit beginning and ending at the vertex row 0, with the numbers indicating the order in which the edges are traversed. 	
		This may be seen more easily if the above scheme is described algorithmically as follows, starting with $Q(0,0)=1$. The operations $D$, $R$ and $L$ mean moving down, right and left respectively and inserting the next number, the square $Q$ being regarded cyclically. The algorithm is:
		
		$(D,R,(D,R,D,L)$ performed $(s-1)$ times, $D,R)$ performed $s$ times.
		
		Any numbering of the square $Q$ which has the ``zig-zag'' pattern as described above will represent an Eulerian circuit. In order to construct mutually avoiding Eulerian circuits, all beginning and ending at the vertex row 0, the numbers $1$ and $4s^2$ must remain in row 0 but be in different columns in each square. Every other number must be in a different row and different column in each square.
		
		This is achieved by defining squares $E_i$ and $E'_i$, $1\leq i\leq 2s-1$, as follows. First let $E_1=E'_1=Q$. Construct square $E'_i$ from square $E'_{i-1}$, $2\leq i\leq 2s-1$, by moving row 0 one column to the right (cyclically) and every other row one column to the right (cyclically) and one column upwards, with row 1 becoming row $2s-1$. Finally, construct square $E_i$ from square $E'_i$, $2\leq i\leq 2s-1$, by interchanging the entries in row 0, apart from $1$ and $4s^2$, of $E'_i$ with the entries in the same column of row $i$. The squares $E_i$, $1\leq i\leq 2s-1$, give a set of $2s-1$ mutually avoiding Eulerian circuits of the graph $K_{2s,2s}$.
	\end{proof}
	
	The construction is illustrated by the following example for the case $s=3$.
	
	\begin{example}
		Five mutually avoiding Eulerian circuits for the complete bipartite graph $K_{6,6}$ beginning and ending at the same vertex, and presented as squares as defined in Theorem~\ref{thm:K2s2s}.
		
		\centering
		\setlength{\tabcolsep}{3pt}
		\begin{tabular}{c|cccccc}
			& 0 & 1 & 2 & 3 & 4 & 5 \\
			\hline
			0 & 1 & 12 & 13 & 24 & 25 & 36 \\
			1 & 2 & 3 & 14 & 15 & 26 & 27 \\
			2 & 29 & 4 & 5 & 16 & 17 & 28 \\
			3 & 30 & 7 & 6 & 19 & 18 & 31 \\
			4 & 33 & 8 & 9 & 20 & 21 & 32 \\
			5 & 34 & 11 & 10 & 23 & 22 & 35 \\
		\end{tabular}
		\quad
		\begin{tabular}{c|cccccc}
			& 0 & 1 & 2 & 3 & 4 & 5 \\
			\hline
			0 & 36 & 1 & 7 & 6 & 19 & 18 \\
			1 & 28 & 29 & 4 & 5 & 16 & 17 \\
			2 & 31 & 30 & 12 & 13 & 24 & 25 \\
			3 & 32 & 33 & 8 & 9 & 20 & 21 \\
			4 & 35 & 34 & 11 & 10 & 23 & 22 \\
			5 & 27 & 2 & 3 & 14 & 15 & 26 \\
		\end{tabular}
		\quad
		\begin{tabular}{c|cccccc}
			& 0 & 1 & 2 & 3 & 4 & 5 \\
			\hline
			0 & 22 & 36 & 1 & 11 & 10 & 23 \\
			1 & 18 & 31 & 30 & 7 & 6 & 19 \\
			2 & 21 & 32 & 33 & 8 & 9 & 20 \\
			3 & 25 & 35 & 34 & 12 & 13 & 24 \\
			4 & 26 & 27 & 2 & 3 & 14 & 15 \\
			5 & 17 & 28 & 29 & 4 & 5 & 16 \\
		\end{tabular}
		
		\begin{tabular}{c|cccccc}
			& 0 & 1 & 2 & 3 & 4 & 5 \\
			\hline
			0 & 16 & 17 & 36 & 1 & 4 & 5 \\
			1 & 20 & 21 & 32 & 33 & 8 & 9 \\
			2 & 23 & 22 & 35 & 34 & 11 & 10 \\
			3 & 15 & 26 & 27 & 2 & 3 & 14 \\
			4 & 24 & 25 & 28 & 29 & 12 & 13 \\
			5 & 19 & 18 & 31 & 30 & 7 & 6 \\
		\end{tabular}
		\quad
		\begin{tabular}{c|cccccc}
			& 0 & 1 & 2 & 3 & 4 & 5 \\
			\hline
			0 & 9 & 20 & 21 & 36 & 1 & 8 \\
			1 & 10 & 23 & 22 & 35 & 34 & 11 \\
			2 & 14 & 15 & 26 & 27 & 2 & 3 \\
			3 & 5 & 16 & 17 & 28 & 29 & 4 \\
			4 & 6 & 19 & 18 & 31 & 30 & 7 \\
			5 & 13 & 24 & 25 & 32 & 33 & 12 \\
		\end{tabular}
	\end{example}

	The above theorem can easily be extended to complete multipartite graphs. 
	
	\begin{corollary}\label{cor:multipartite}
	Let $G$ be the complete multipartite graph $K_{2s,2s,\ldots,2s}$ of order $n=2ks$, $s\geq 2$, $k\geq 2$. Then $\av(G)=2s-1$.
	\end{corollary}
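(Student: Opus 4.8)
The plan is to read off the upper bound from Lemma~\ref{lem:avbounds} and to prove the matching lower bound $\av(G)\ge 2s-1$ by induction on the number $k$ of partition classes, with Theorem~\ref{thm:K2s2s} supplying the base case $k=2$ and the splicing idea from Proof~2 of Theorem~\ref{thm:max0mod4} powering the inductive step.

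For the upper bound, $G$ is regular with $\Delta=\delta=n-2s\le n-2$, so Lemma~\ref{lem:avbounds}(iii) gives $\av(G)\le n-\Delta-1=2s-1$. For the lower bound, assume $\av\big(K_{2s,\dots,2s}\big)\ge 2s-1$ for the graph $H$ with $k-1$ classes, and let $\Gamma_1,\dots,\Gamma_{2s-1}$ be mutually avoiding Eulerian circuits of $H$ from a common vertex $u$. Label the classes of the target graph $G$ as $A,B,\dots,Z$, each with vertices $x_0,\dots,x_{2s-1}$, where $Z$ is the class newly adjoined to $H$. For each of the $k-1$ classes $X$ of $H$ I would fix an Eulerian circuit $D_X$ of the bipartite graph $K_{2s,2s}$ spanned by $X\cup Z$, based at $x_0$, and then, at a suitable step, splice a rotated copy of $D_X$ into each $\Gamma_t$. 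The rotation I have in mind is the \emph{diagonal shift} $D_X^{(+j)}$ which adds $j$ to the subscript of every vertex, of $X$ and of $Z$ alike; this is an automorphism of $K_{2s,2s}[X\cup Z]$, so $D_X^{(+j)}$ is again an Eulerian circuit of it, now based at $x_j$.

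The structural fact that produces the splicing points is that in a complete multipartite graph two distinct non-adjacent vertices lie in the same class; hence at every internal step of the circuits $\Gamma_1,\dots,\Gamma_{2s-1}$ they all sit in one common class, and since they collectively traverse every edge incident to $X$ there is an internal step $p_X$ at which they all sit in $X$, at $2s-1$ pairwise distinct vertices $x_{j(X,1)},\dots,x_{j(X,2s-1)}$. At step $p_X$ I would insert $D_X^{(+j(X,t))}$ into $\Gamma_t$; carrying this out for all $k-1$ classes $X$ (the steps $p_X$ are automatically distinct, as each internal step determines a unique common class) turns every $\Gamma_t$ into an Eulerian circuit of $G$, because the inserted tours account for exactly the edges meeting $Z$. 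The modified circuits remain mutually avoiding: away from the inserted blocks nothing has changed; inside the block spliced at $p_X$ all $2s-1$ circuits are synchronised (each enters at step $p_X$ and runs for the same number of steps), all lie on the same side, $X$ or $Z$, at each substep since the diagonal shift respects sides, and are pairwise distinct there because the shifts $j(X,t)$ are distinct --- hence pairwise non-adjacent.

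The part I expect to need the most care is the bookkeeping around these splicing steps: checking that for \emph{every} class $X$, including the one containing $u$, the step $p_X$ can be chosen strictly internal, and that the $k-1$ insertions, located in the original circuits, do not interfere. Both rest on the facts that consecutive vertices of a circuit in a multipartite graph lie in different classes and that each Eulerian $\Gamma_t$ revisits every class many times; these are routine to verify once stated, as is the extension of the whole scheme from $\av\ge 2$ to $\av\ge 2s-1$, which is exactly the point at which Theorem~\ref{thm:K2s2s} takes over the role played by the $K_{4,4}$ base case in Section~\ref{sec:maximal}.
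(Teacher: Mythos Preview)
Your proposal is correct and follows exactly the paper's own line: upper bound from Lemma~\ref{lem:avbounds}(iii), base case from Theorem~\ref{thm:K2s2s}, and the inductive step is precisely the splicing procedure of Proof~2 of Theorem~\ref{thm:max0mod4}, lifted from two circuits to $2s-1$ circuits. The paper's proof is terse (``apply the procedure in Proof 2 of Theorem~\ref{thm:max0mod4} to the $2s-1$ avoiding Eulerian circuits''), and what you have written simply unpacks that sentence, including the key structural observation---that in a complete multipartite graph mutually avoiding circuits must occupy the same class at every step---which the paper leaves implicit.
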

	\begin{proof}
	From Lemma~\ref{lem:avbounds}, $\av(G)\leq n-\Delta-1=2s-1$. The proof now proceeds by induction. It is true for $k=2$ by Theorem~\ref{thm:K2s2s}. Let $k\geq 3$ and assume that the result is true for the graph $K_{2s,2s,\ldots,2s}$ of order $2(k-1)s$. Now apply the procedure in Proof 2 of Theorem~\ref{thm:max0mod4} to the $2s-1$ avoiding Eulerian circuits. We then have $2s-1$ avoiding circuits of the graph $K_{2s,2s,\ldots,2s}$ of order $2ks$. Thus $\av(G)=2s-1$.
	\end{proof}
	
	We now briefly turn our attention to complete bipartite graphs $K_{2r,2s}$, $1\leq r<s$. When $r=1$, such graphs have avoidance index 1, as can readily be seen by an Eulerian circuit starting at either point of the 2-partition. However, it is easy to find two mutually avoiding Eulerian circuits beginning and ending at any vertex of the larger partition. This simple observation prompts the following definitions.
	
	The \emph{avoidance index} $\av(v)$ of a vertex $v\in V(G)$ of an Eulerian graph $G$ is the maximum number of mutually avoiding Eulerian circuits beginning and ending at $v$. This is related to the avoidance index of the graph in the obvious way:
	\[\av(G)=\min_{v\in V(G)}\av(v).\]
	We may now define the \emph{mean avoidance index} $\eav(G)$ to be the average avoidance index across all vertices:
	\[\eav(G)=\frac{\displaystyle\sum_{v\in V(G)}\av(v)}{\lvert V(G)\rvert}.\]
	Thus for the complete bipartite graph $K_{2,2s}$,  $\eav(K_{2,2s})=(4s+2)/(2s+2)=2-1/(s+1)$, which asymptotically tends to 2 as $s$ tends to infinity.
	
	It is not the intention in this paper to investigate a number of interesting questions which arise with the mean avoidance index, but the observations immediately above can easily be extended to graphs $K_{2r,2s}$, $r<s$, where $r\geq 2$. Here $\delta=2r$ and $\Delta=\alpha=2s$. Therefore $\av(K_{2r,2s})\leq 2r-1$. The number of mutually avoiding Eulerian circuits beginning and ending at any vertex can be determined either by constructing a $2r\times 2s$ rectangle or a $2s\times 2r$ rectangle, and numbering as described in Theorem~\ref{thm:K2s2s}. From a vertex $v$ in the $2r$-partition we have $\av(v)=2r-1$, and in the $2s$-partition $\av(v)=2r$. We therefore have the following result.
	
	\begin{theorem}\label{thm:K2r2s}
		Let $G$ be the complete bipartite graph $K_{2r,2s}$, $1\leq r<s$. Then $\av(G)=2r-1$ and $\eav(G)=2r-r/(r+s)$.
	\end{theorem}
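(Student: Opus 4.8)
Write $X$ for the partition of size $2r$ (its vertices have degree $2s$) and $Y$ for the one of size $2s$ (degree $2r$); since $r<s$ the graph $K_{2r,2s}$ is not vertex-transitive, but $\operatorname{Aut}(K_{2r,2s})=S_{2r}\times S_{2s}$ acts transitively on $X$ and on $Y$ separately, so $\av(v)$ takes a single value $\alpha_X$ on $X$ and a single value $\alpha_Y$ on $Y$, and it is these two numbers I would determine. On the upper side, $\delta=2r$ and $\Delta=2s$, so Lemma~\ref{lem:avbounds}(iii) gives $\av(G)\le n-\Delta-1=2r-1$ at once; and running the argument behind Lemma~\ref{lem:avbounds}(i)--(ii) at a vertex $v\in Y$ gives $\alpha_Y=\av(v)\le\deg(v)=2r$, since immediately after the start all the circuits occupy distinct, hence (as $G$ is bipartite) mutually non-adjacent, neighbours of $v$.

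For the lower bounds I would adapt the square construction of Theorem~\ref{thm:K2s2s} to rectangles. Index the starting partition by the rows and the other partition by the columns of a $p\times q$ array, with cells representing edges, and fill it by the same zig-zag algorithm as in Theorem~\ref{thm:K2s2s} (now with $p$ rows' worth of the $(D,R,D,L)$ blocks); this traces an Eulerian circuit based at row $0$. One then forms shifted copies by exactly the $E_i,E_i'$ row-cycling-and-column-translation recipe and checks, using the bipartite criterion that distinct vertices are automatically at distance $\ge 2$, that at every intermediate step two different copies sit at different vertices. Starting from a vertex of $X$, take $p=2r$, $q=2s$: the labels $1$ and $4rs$ must lie in row $0$ in every copy, which costs one copy, and since $q\ge p-1$ the $p$ rows are the binding constraint, so $\alpha_X\ge p-1=2r-1$. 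Starting from a vertex of $Y$, take $p=2s$, $q=2r$: now $q=2r<2s-1=p-1$, so the $q$ columns are the binding constraint and $\alpha_Y\ge 2r$.

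Putting these together, $\alpha_Y=2r$ exactly, while $\alpha_X\ge 2r-1$; and since $\av(G)=\min(\alpha_X,\alpha_Y)\le 2r-1<\alpha_Y$, we are forced to have $\alpha_X\le 2r-1$, hence $\alpha_X=2r-1$ and $\av(G)=2r-1$. The mean then follows by a weighted average over the $2r$ vertices of $X$ and the $2s$ of $Y$:
\[
\eav(G)=\frac{2r(2r-1)+2s\cdot 2r}{2r+2s}=\frac{r\bigl(2r-1+2s\bigr)}{r+s}=2r-\frac{r}{r+s}.
\]

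The part I expect to be real work is the rectangular construction of the second paragraph: one must verify that the zig-zag closes into a single Eulerian circuit for all even $p$ and $q$ (the square proof visibly uses $p=q$), and, more delicately, count exactly how many shifted copies survive in the two orientations — in particular justify why the binding constraint switches from ``$p-1$ rows'' to ``$q$ columns'' as soon as $q<p-1$, which is exactly what makes $\alpha_Y$ equal to $2r$ rather than the naive $2s-1$.
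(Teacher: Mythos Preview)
Your approach is essentially the same as the paper's: the upper bound $\av(G)\le 2r-1$ comes from Lemma~\ref{lem:avbounds}(iii), the vertex-wise upper bound $\alpha_Y\le 2r$ from the degree constraint, and the lower bounds from running the zig-zag construction of Theorem~\ref{thm:K2s2s} on a $2r\times 2s$ rectangle (starting in $X$) and a $2s\times 2r$ rectangle (starting in $Y$), exactly as the paper does in the discussion preceding the theorem and in the $K_{4,6}$ example. Your honest flag that the rectangular construction needs checking is fair; the paper itself does not verify this in detail either, relying instead on the explicit $K_{4,6}$ example to illustrate that the shift procedure yields $2r-1$ and $2r$ copies in the two orientations respectively.
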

	
	\begin{example}
		In order to illustrate the above argument, relevant rectangles for the complete bipartite graph $K_{4,6}$ are given below.
		
		\centering
		\setlength{\tabcolsep}{3pt}
		\begin{tabular}{c|cccccc}
			& 0 & 1 & 2 & 3 & 4 & 5 \\
			\hline
			0 & 1 & 8 & 9 & 16 & 17 & 24 \\
			1 & 2 & 3 & 10 & 11 & 18 & 19 \\
			2 & 21 & 4 & 5 & 12 & 13 & 20 \\
			3 & 22 & 7 & 6 & 15 & 14 & 23 \\
		\end{tabular}
		\quad
		\begin{tabular}{c|cccccc}
			& 0 & 1 & 2 & 3 & 4 & 5 \\
			\hline
			0 & 24 & 1 & 7 & 6 & 15 & 14 \\
			1 & 20 & 21 & 4 & 5 & 12 & 13 \\
			2 & 23 & 22 & 8 & 9 & 16 & 17 \\
			3 & 19 & 2 & 3 & 10 & 11 & 18 \\
		\end{tabular}
		\quad
		\begin{tabular}{c|cccccc}
			& 0 & 1 & 2 & 3 & 4 & 5 \\
			\hline
			0 & 13 & 24 & 1 & 4 & 5 & 12 \\
			1 & 14 & 23 & 22 & 7 & 6 & 15 \\
			2 & 18 & 19 & 2 & 3 & 10 & 11 \\
			3 & 17 & 20 & 21 & 8 & 9 & 16 \\
		\end{tabular}
		
		\begin{tabular}{c|cccc}
			& 0 & 1 & 2 & 3 \\
			\hline
			0 & 1 & 12 & 13 & 24 \\
			1 & 2 & 3 & 14 & 15 \\
			2 & 17 & 4 & 5 & 16 \\
			3 & 18 & 7 & 6 & 19 \\
			4 & 21 & 8 & 9 & 20 \\
			5 & 22 & 11 & 10 & 23 \\
		\end{tabular}
		\quad
		\begin{tabular}{c|cccc}
			& 0 & 1 & 2 & 3 \\
			\hline
			0 & 24 & 1 & 7 & 6 \\
			1 & 16 & 17 & 4 & 5 \\
			2 & 19 & 18 & 12 & 13 \\
			3 & 20 & 21 & 8 & 9 \\
			4 & 23 & 22 & 11 & 10 \\
			5 & 15 & 2 & 3 & 14 \\
		\end{tabular}
		\quad
		\begin{tabular}{c|cccc}
			& 0 & 1 & 2 & 3 \\
			\hline
			0 & 10 & 24 & 1 & 11 \\
			1 & 6 & 19 & 18 & 7 \\
			2 & 9 & 20 & 21 & 8 \\
			3 & 13 & 23 & 22 & 12 \\
			4 & 14 & 15 & 2 & 3 \\
			5 & 5 & 16 & 17 & 4 \\
		\end{tabular}
		\quad
		\begin{tabular}{c|cccc}
			& 0 & 1 & 2 & 3 \\
			\hline
			0 & 4 & 5 & 24 & 1 \\
			1 & 8 & 9 & 20 & 21 \\
			2 & 11 & 10 & 23 & 22 \\
			3 & 3 & 14 & 15 & 2 \\
			4 & 12 & 13 & 16 & 17 \\
			5 & 7 & 6 & 19 & 18 \\
		\end{tabular}	
	\end{example}
	
	Next, it is natural to consider the graph $K^*_{2r+1,2r+1}$, $r\geq 1$, the complete bipartite graph minus a perfect matching. From Theorem~\ref{thm:bipodd}, for $r\geq 2$ $\av(K^*_{2r+1,2r+1})\geq 2$ and from Lemma~\ref{lem:avbounds}, $\av(K^*_{2r+1,2r+1})\leq 2r$.
	
	When $r=2$, $\av(K^*_{5,5})=4$. Consider the complete bipartite graph $K_{5,5}$ where the vertices of the two partitions are $\{0,2,4,6,8\}$ and $\{1,3,5,7,9\}$ respectively. Remove the edges $0\edge 5$, $1\edge 6$, $2\edge 7$, $3\edge 8$ and $4\edge 9$ to give $K^*_{5,5}$. Four mutually avoiding Eulerian circuits both starting and ending at the vertex 0 are the following.

	Circuit $C_1$: 0\ 1\ 4\ 3\ 0\ 7\ 8\ 9\ 6\ 5\ 8\ 1\ 2\ 5\ 4\ 7\ 6\ 3\ 2\ 9\ 0
	
	Circuit $C_2$: 0\ 9\ 8\ 7\ 6\ 9\ 2\ 1\ 0\ 3\ 2\ 5\ 4\ 3\ 6\ 5\ 8\ 1\ 4\ 7\ 0

	Circuit $C_3$: 0\ 3\ 2\ 1\ 4\ 5\ 6\ 3\ 4\ 7\ 6\ 9\ 0\ 7\ 8\ 9\ 2\ 5\ 8\ 1\ 0

	Circuit $C_4$: 0\ 7\ 6\ 5\ 2\ 3\ 4\ 5\ 8\ 1\ 4\ 7\ 8\ 9\ 2\ 1\ 0\ 9\ 6\ 3\ 0
	
	These circuits were found by computer. We also have an example of six mutually avoiding Eulerian circuits of $K^*_{7,7}$. Again we begin with the complete bipartite graph $K_{7,7}$ where the vertices of the two partitions are $\{0,2,4,6,8,10,12\}$ and $\{1,3,5,7,9,11,13\}$ respectively. We remove the edges $i\edge i+7$, $0 \leq i \leq 6$; then six mutually avoiding circuits $C_1,\ldots,C_6$ in the resulting $K^*_{7,7}$ are as follows.

 \newpage
	0\ 1\ 2\ 3\ 4\ 7\ 8\ 9\ 10\ 13\ 0\ 3\ 6\ 9\ 12\ 11\ 10\ 7\ 6\ 5\ 4\ 9\ 0\ 11\ 2\ 13\ 8\ 3\ 12\ 13\ 4\ 1\ 6\ 11\ 8\ 5\ 2\ 7\ 12\ 1\ 10\ 5\ 0
	
	0\ 13\ 12\ 7\ 8\ 9\ 10\ 7\ 6\ 5\ 4\ 7\ 2\ 3\ 4\ 1\ 2\ 5\ 0\ 1\ 12\ 3\ 8\ 13\ 10\ 1\ 6\ 11\ 8\ 5\ 10\ 11\ 12\ 9\ 0\ 3\ 6\ 9\ 4\ 13\ 2\ 11\ 0
	
	0\ 3\ 4\ 1\ 2\ 3\ 6\ 5\ 4\ 9\ 10\ 1\ 12\ 13\ 2\ 7\ 6\ 11\ 10\ 13\ 0\ 1\ 6\ 9\ 12\ 11\ 0\ 5\ 10\ 7\ 12\ 3\ 8\ 7\ 4\ 13\ 8\ 5\ 2\ 11\ 8\ 9\ 0
	
	0\ 11\ 10\ 5\ 6\ 11\ 12\ 13\ 2\ 3\ 6\ 9\ 10\ 7\ 8\ 9\ 12\ 1\ 2\ 11\ 8\ 13\ 10\ 1\ 0\ 5\ 2\ 7\ 4\ 3\ 0\ 9\ 4\ 1\ 6\ 7\ 12\ 3\ 8\ 5\ 4\ 13\ 0
	
	0\ 5\ 6\ 9\ 10\ 13\ 2\ 1\ 12\ 11\ 8\ 5\ 4\ 1\ 6\ 3\ 8\ 13\ 4\ 7\ 6\ 11\ 2\ 3\ 4\ 9\ 12\ 13\ 0\ 9\ 8\ 7\ 2\ 5\ 10\ 11\ 0\ 1\ 10\ 7\ 12\ 3\ 0
	
	0\ 9\ 8\ 11\ 12\ 1\ 4\ 3\ 8\ 7\ 12\ 13\ 8\ 5\ 10\ 13\ 4\ 9\ 12\ 3\ 2\ 7\ 4\ 5\ 6\ 7\ 10\ 9\ 6\ 11\ 2\ 5\ 0\ 13\ 2\ 1\ 10\ 11\ 0\ 3\ 6\ 1\ 0
	
	On the basis of this, admittedly rather slim, evidence we conjecture that $\av(K^*_{2r+1,2r+1})=2r$, $r\geq 2$.
	
	The following definition is prompted by the above theorem and examples. An Eulerian graph is said to be \emph{saturated} if it is $d$-regular and admits a full set of $d$ mutually avoiding Eulerian circuits. Clearly $d$ cannot be 2, so the first case to consider is when $d=4$. Table~\ref{tab:ind4reg} shows the results of computer calculations for 4-regular graphs of small orders.
	
	\begin{table}[h]
		\centering
		\begin{tabular}{|c|c|cccc|}
			\hline
			Order & 4-regular & \multicolumn{4}{|c|}{Avoidance index} \\
			& Eulerian graphs & 1 & 2 & 3 & 4 \\
			\hline
			5 & 1 & 1 & 0 & 0 & 0 \\
			6 & 1 & 1 & 0 & 0 & 0 \\
			7 & 2 & 2 & 0 & 0 & 0 \\
			8 & 6 & 1 & 4 & 1 & 0 \\
			9 & 16 & 2 & 14 & 0 & 0 \\
			10 & 59 & 2 & 55 & 1 & 1 \\
			11 & 265 & 3 & 242 & 20 & 0 \\
			12 & 1544 & 1 & 1225 & 314 & 4 \\
			13 & 10778 & 3 & 8194 & 2576 & 5 \\
			\hline
		\end{tabular}
		\caption{Avoidance index of 4-regular graphs}
		\label{tab:ind4reg}
	\end{table}
	
	Thus the smallest example of a 4-regular saturated Eulerian graph is $K^*_{5,5}$ of order 10. The graph of order 10 and avoidance index 3 is a circulant; on the group $\Z_{10}$ it has generating set $\{2,3\}$. Three of the saturated graphs of order 12 are Cayley graphs. Two are also circulants; on the group $\Z_{12}$ they have generating sets $\{1,3\}$ and $\{1,5\}$ respectively. The third graph is on the group $\Z_6\times\Z_2$ and has generating set $\{(1,0),(3,0),(0,1)\}$. The fourth graph is not a Cayley graph, and indeed it is not vertex-transitive. It is the graph $K_{6,6}$ from which an 8-cycle and a 4-cycle have been removed, and it is shown in Figure~\ref{fig:n12ind4}. 
	
	\begin{figure}[h]
		\centering
		\begin{tikzpicture}[x=0.2mm,y=0.2mm,very thick,vertex/.style={circle,draw,minimum size=10,inner sep=0,fill=lightgray}]
	\node at (-210,0) [vertex] (v1) {};
	\node at (-90,0) [vertex] (v2) {};
	\node at (150,120) [vertex] (v3) {};
	\node at (150,60) [vertex] (v4) {};
	\node at (150,-60) [vertex] (v5) {};
	\node at (150,-120) [vertex] (v6) {};
	\node at (-150,120) [vertex] (v7) {};
	\node at (-150,-120) [vertex] (v8) {};
	\node at (-150,60) [vertex] (v9) {};
	\node at (-150,-60) [vertex] (v10) {};
	\node at (90,0) [vertex] (v11) {};
	\node at (210,0) [vertex] (v12) {};
	\draw (v1) to (v7);
	\draw (v1) to (v8);
	\draw (v1) to (v9);
	\draw (v1) to (v10);
	\draw (v2) to (v7);
	\draw (v2) to (v8);
	\draw (v2) to (v9);
	\draw (v2) to (v10);
	\draw (v3) to (v7);
	\draw (v3) to (v9);
	\draw (v3) to (v11);
	\draw (v3) to (v12);
	\draw (v4) to (v7);
	\draw (v4) to (v10);
	\draw (v4) to (v11);
	\draw (v4) to (v12);
	\draw (v5) to (v8);
	\draw (v5) to (v9);
	\draw (v5) to (v11);
	\draw (v5) to (v12);
	\draw (v6) to (v8);
	\draw (v6) to (v10);
	\draw (v6) to (v11);
	\draw (v6) to (v12);
\end{tikzpicture}
		\caption{A saturated 4-regular graph of order 12}
		\label{fig:n12ind4}
	\end{figure}
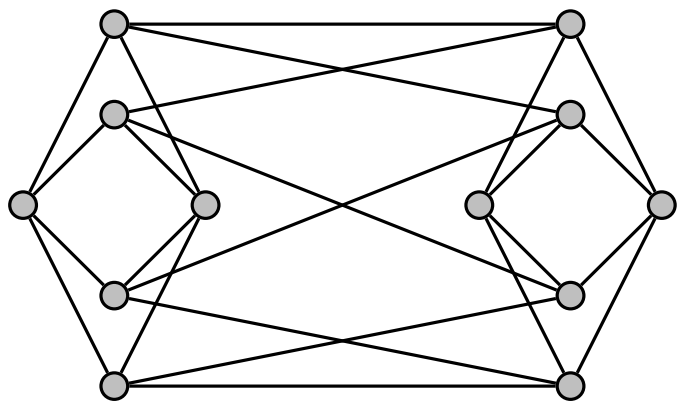
	
	All of these graphs are bipartite. The smallest example of a non-bipartite saturated 4-regular Eulerian graph occurs at order 13: the circulant graph on the group $\Z_{13}$ with generating set $\{1,3\}$. It would be good to find an infinite class of saturated 4-regular Eulerian graphs. Prime candidates are circulant graphs on the group $\Z_n$ with various generating sets. For generating set $\{1,3\}$ the graphs are saturated for $n=10$ or $12\leq n\leq 30$, for generating set $\{2,3\}$ they are saturated for $14\leq n\leq 30$ and for generating set $\{1,4\}$ they are saturated for $13\leq n\leq 30$. Again these results were found by computer.
	
	It may be of some interest that the vast majority of the graphs considered in Table~\ref{tab:ind4reg} are doubly Eulerian; indeed for order 12 only one is not. This exceptional graph is illustrated in Figure~\ref{fig:n12ind1}. Clearly this graph has vertex connectivity 1; this is no coincidence as our next result shows.
	
	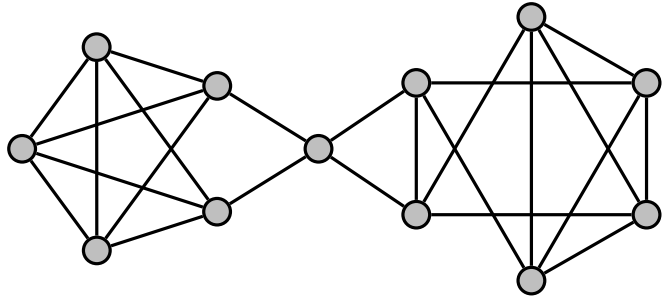
\begin{figure}
		\centering
		\begin{tikzpicture}[x=0.2mm,y=0.2mm,very thick,vertex/.style={circle,draw,minimum size=10,inner sep=0,fill=lightgray}]
	\node at (-164.2,67.4) [vertex] (v1) {};
	\node at (197.4,43.7) [vertex] (v2) {};
	\node at (46,-43.7) [vertex] (v3) {};
	\node at (-213.2,0) [vertex] (v4) {};
	\node at (121.7,-87.4) [vertex] (v5) {};
	\node at (-18.3,0) [vertex] (v6) {};
	\node at (-164.1,-67.4) [vertex] (v7) {};
	\node at (121.7,87.4) [vertex] (v8) {};
	\node at (-84.9,41.7) [vertex] (v9) {};
	\node at (-85,-41.7) [vertex] (v10) {};
	\node at (46,43.7) [vertex] (v11) {};
	\node at (197.4,-43.7) [vertex] (v12) {};
	\draw (v1) to (v4);
	\draw (v1) to (v7);
	\draw (v1) to (v9);
	\draw (v1) to (v10);
	\draw (v2) to (v5);
	\draw (v2) to (v8);
	\draw (v2) to (v11);
	\draw (v2) to (v12);
	\draw (v3) to (v6);
	\draw (v3) to (v8);
	\draw (v3) to (v11);
	\draw (v3) to (v12);
	\draw (v4) to (v7);
	\draw (v4) to (v9);
	\draw (v4) to (v10);
	\draw (v5) to (v8);
	\draw (v5) to (v11);
	\draw (v5) to (v12);
	\draw (v6) to (v9);
	\draw (v6) to (v10);
	\draw (v6) to (v11);
	\draw (v7) to (v9);
	\draw (v7) to (v10);
	\draw (v8) to (v12);
\end{tikzpicture}
		\caption{The unique 4-regular graph of order 12 which is not doubly Eulerian}
		\label{fig:n12ind1}
	\end{figure}

	\begin{theorem}\label{thm:4regind1}
	For every $n\geq 5$, there exists a 4-regular graph of order $n$ which is not doubly Eulerian.
	\end{theorem}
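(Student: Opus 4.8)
\emph{Proof proposal.} A $4$-regular graph of order $n$ exists for every $n\ge5$ (for instance the circulant on $\Z_n$ with connection set $\{1,2\}$), so it suffices to exhibit, for each $n\ge5$, a single $4$-regular graph of order $n$ with avoidance index $1$. For the small orders $5\le n\le10$ this is already available from the computations underlying Table~\ref{tab:ind4reg}, which records $4$-regular graphs of avoidance index~$1$ at each of these orders; in particular $K_5$ and the octahedron $K_{2,2,2}$ are not doubly Eulerian by Lemmas~\ref{oddnvaln-1} and~\ref{evennvaln-2}. So the task reduces to a construction for $n\ge11$.

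For odd $n\ge 11$ I would use a graph $G$ with a cut vertex $v$. Since $G$ is $4$-regular, a handshake argument on each component of $G-v$ forces that component to meet $v$ in an even number of edges, and hence that $G-v$ has exactly two components $H_1,H_2$, with $v$ contributing two edges to each; then $G[H_i\cup\{v\}]$ has $1+2\lvert H_i\rvert$ edges. I would take $\lvert H_1\rvert=\lvert H_2\rvert=(n-1)/2\ge5$, so that each $G[H_i\cup\{v\}]$ has exactly $n$ edges and the $2n$ edges of $G$ split into two equal halves. Such graphs are easy to build explicitly: take a connected $4$-regular graph on $(n-1)/2$ vertices, subdivide one edge by a new degree-$2$ vertex, do the same on a second copy, and identify the two subdivision vertices to form $v$. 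The claim is that $G$ is not doubly Eulerian, which I would prove by considering two Eulerian circuits starting and ending at $v$: because $v$ has degree $2$ in each $G[H_i\cup\{v\}]$, an Eulerian circuit based at $v$ which enters one side must use that side's other edge at $v$ before it can return, and by then it has covered the whole of that side; hence every Eulerian circuit based at $v$ is of the form ``all of one half, then all of the other'' and returns to $v$ after exactly $n$ steps. Two such circuits are therefore both at $v$ at step $n$, a step strictly between the endpoints, so they are not avoiding and $\av(v)=\av(G)=1$.

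For even $n$ this argument breaks down, because both sides of a cut vertex (and of a $2$-edge cut) have an odd number of edges, so the $2n$ edges cannot be split evenly at such a cut. Instead I would use a graph with a $2$-vertex cut $\{v_1,v_2\}$: take $v_1,v_2$ non-adjacent, $G-\{v_1,v_2\}$ equal to two components $B_1,B_2$ of size $(n-2)/2$, with each of $v_1,v_2$ sending two edges to each of $B_1,B_2$. Then $G[V(B_i)\cup\{v_1,v_2\}]$ has $2+2\lvert B_i\rvert=n$ edges, so again the $2n$ edges split into two equal halves. Starting two Eulerian circuits at $v_1$: since $v_1$ and $v_2$ each occur only twice on a circuit, a circuit can cross the cut at most four times, and any circuit that crosses only twice is again ``all of one half, then all of the other'' and is at $v_1$ at step $n$, giving the same collision.

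\textbf{The main obstacle} is the remaining possibility that a circuit crosses the cut four times: then its ``$v_1$-visit'' in the interior need not occur at step $n$, so the collision is not forced, and the construction has to be refined to exclude this. My plan for this step is to build additional rigidity into the two halves --- for example by arranging that $v_2$ is itself a cut vertex of each $G[V(B_i)\cup\{v_1,v_2\}]$, which tightly constrains where the interior crossings can lie --- and then to run a short case analysis over the (at most three) possible positions of the interior $v_1$-crossing, showing that in each case the two circuits must still conflict, either at $v_1$, at $v_2$, or inside one of the halves. I expect this case analysis for the even orders, rather than anything in the odd case or the small cases, to be where the real work lies.
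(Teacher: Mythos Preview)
Your odd-order construction is essentially the paper's: a cut vertex $v$ with two equal halves, so every Eulerian circuit based at $v$ returns to $v$ at the midpoint and any two such circuits collide there. That part is fine.

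The gap is in the even case. You correctly observe that with a cut vertex the two sides must have $2k_1+1$ and $2k_2+1$ edges with $k_1+k_2=n-1$ odd, so the split cannot be even; but you then abandon the cut-vertex idea and move to a $2$-vertex cut, where you run into the four-crossing problem and leave the argument unfinished. The paper avoids all of this by \emph{keeping} the cut vertex and exploiting the unevenness rather than fighting it. Take the two sides of sizes $s-1$ and $s$ (so their edge counts are $2s-1$ and $2s+1$), and on the larger side $K$ arrange that the two neighbours of $v$ are \emph{adjacent to each other}. Now if two avoiding circuits start at $v$, they must enter different sides; the one in the small side returns to $v$ at step $2s-1$, while the other is still two circuit-steps from $v$ inside $K$. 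At step $2s$, the first circuit has just stepped from $v$ to one neighbour of $v$ in $K$, and the second is at the other neighbour of $v$ in $K$ --- and these two vertices are adjacent, violating avoidance.

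So the missing idea is not a finer case analysis on a $2$-vertex cut, but a one-line adjacency trick on the larger side of an asymmetric $1$-vertex cut. Your proposed route via $\{v_1,v_2\}$ might be made to work, but it is strictly harder than what is needed.
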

	\begin{proof}
	The result is true for $5\leq n\leq 10$ by Table~\ref{tab:ind4reg}. Let $n\geq 11$ and suppose first that $n=2s+1$ is odd. Let $H$ be any 4-regular graph of order $s$ and delete one edge from $H$. Create a graph $G$ by taking two disjoint copies of $H$ and adding a new vertex $v$ which is joined to both vertices of degree 3 in both copies of $H$. Any Eulerian circuit starting and ending at $v$ must traverse all the edges in one of the copies of $H$ before returning to $v$, since $v$ is a cut vertex and is revisited only once on the circuit. Thus any two Eulerian circuits starting and ending at $v$ must collide at this point.
	
	Now suppose $n=2s$ is even, $s\geq 6$. Choose any 4-regular graph $H$ of order $s-1$ and delete one edge from it. Let $K$ be a graph of order $s$ which has $s-2$ vertices of order 4 and exactly two vertices of order 3, which must be adjacent. (Such a graph can always be constructed: we start with a circulant graph of order $s$ and connection set $\{-2,-1,1,2\}$, delete the edges from $0$ to $-1$ and $1$ to $2$ and add a new edge from $-1$ to $2$.) As before, construct a graph $G$ by taking a copy of $H$ and $K$ plus a new vertex $v$ which is joined to the degree 3 vertices in $H$ and $K$. Suppose we have a pair of mutially avoiding Eulerian circuits starting and ending at $v$. By the argument in the case of odd $n$, one circuit must visit all the edges of $H$ before returning to $v$; the other must have been in $K$ and at the point when the first circuit is back at $v$, the second is two steps away from $v$. After one more step, the circuits are on the pair of vertices which have degree 3 in $K$, and these are adjacent. (This situation is clearly seen by referring to Figure~\ref{fig:n12ind1}.)
	\end{proof}

	It is interesting to note that there are exactly three ways to construct a graph $G$ of order 13 in the manner of the proof of Theorem~\ref{thm:4regind1}, and from Table~\ref{tab:ind4reg} these account for all the 4-regular graphs of order 13 which are not doubly Eulerian.
	
	Turning now to 6-regular graphs, by Lemma~\ref{lem:avbounds} the smallest possible order of a doubly Eulerian graph is 9. There are four Eulerian graphs of order 9, each of which is a complete graph $K_9$ from which a 2-factor has been removed. The two which have respectively either a 9-cycle or three 3-cycles removed are circulants, and therefore vertex-transitive. These both have avoidance index 2. The other two which have respectively either a 6-cycle and a 3-cycle or a 5-cycle and a 4-cycle removed only have avoidance index 1.
	
	There are 21 6-regular graphs of order 10. By Lemma~\ref{lem:avbounds}, the maximum possible avoidance index is 3. In fact, no 6-regular graph of order 10 has avoidance index 3. The computations required to determine the split of these 21 graphs between avoidance index 1 and 2 are substantial, and we have not attempted this.
	
	Finally in this section we present a doubling construction for Eulerian circulant graphs.
	\begin{theorem}\label{thm:gendoub}
	Let $G$ be a an Eulerian circulant graph of order $n$ and degree $d$. Suppose that $G$ admits a set of $k$ mutually avoiding Eulerian circuits. Then there exists a graph of order $2n$ and degree $2d$ which also admits $k$ mutually avoiding Eulerian circuits.
	\end{theorem}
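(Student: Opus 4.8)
The approach is to exhibit the desired graph as a ``doubled'' copy of $G$, generalising the doubling construction used in Proof~2 of Theorem~\ref{thm:max2mod4}. Write $G$ as a circulant on $\Z_n$ with symmetric connection set $S$, so $|S|=d$. Define $H$ on vertex set $\Z_n\times\{0,1\}$ with three families of edges: inside copy $0$, the edges $(x,0)(x+s,0)$ for $s\in S$; inside copy $1$, the edges $(x,1)(x+s,1)$ for $s\in S$; and the \emph{cross} edges $(x,0)(x+s,1)$ for $s\in S$. Then $H$ has order $2n$, is $2d$-regular, is connected, and is vertex-transitive, since the translations $(x,i)\mapsto(x+c,i)$ and the copy-swap $(x,i)\mapsto(x,1-i)$ are automorphisms (in fact $H$ is again a circulant when $n$ is odd). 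The key gadget: given an Eulerian circuit $C$ of $G$ from vertex $0$, replace each traversed edge $x\to y$ (with $y=x+s$, $s\in S$) by the path $(x,0),(y,1),(x,1),(y,0)$ of length $3$ in $H$; call the resulting closed walk $\widehat{C}$. Since $C$ traverses each edge of $G$ exactly once and in one direction, a short direction-counting argument shows $\widehat{C}$ is a closed trail that uses every cross edge exactly once, every copy-$1$ edge exactly once, and no copy-$0$ edge. Hence $C$ (read inside copy $0$) followed by $\widehat{C}$ is an Eulerian circuit of $H$, which settles $k=1$.

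For general $k$, start from a set $C^{(1)},\dots,C^{(k)}$ of mutually avoiding Eulerian circuits of $G$, all from $0$. One cannot simply take $C^{(j)}$ followed by $\widehat{C^{(j)}}$: all $k$ circuits would sit at $(0,0)$ at the (interior) junction step, and the detours $\widehat{C^{(i)}}$, $\widehat{C^{(j)}}$ would pass through images of $(0,0)$ simultaneously. The remedy, exactly as in Proof~2 of Theorem~\ref{thm:max2mod4}, is to use a \emph{single} detour $\widehat{C^{(1)}}$, translated by a different amount for each circuit, and to splice it into the interior of each $C^{(j)}$. Let $p$ be the step at which $C^{(1)}$ first returns to $0$ (this exists because $k\ge 2$ forces $d\ge 4$), write $C^{(j)}=C^{(j)}_-\,C^{(j)}_+$ with $C^{(j)}_-$ the first $p$ edges, and put $z_j:=C^{(j)}(p)$, the vertex of $C^{(j)}$ at step $p$ (so $z_1=0$). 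Define $D^{(j)}$ to be the concatenation
\[ D^{(j)}\colon\quad C^{(j)}_-,\ \ \widehat{C^{(1)}}\text{ translated by }z_j,\ \ C^{(j)}_+ , \]
where the translated detour runs from $(z_j,0)$ to $(z_j,0)$, matching the end of $C^{(j)}_-$ and the start of $C^{(j)}_+$.

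I would then verify the following, which is essentially bookkeeping. The three phases have lengths $p$, $3|E(G)|$ and $|E(G)|-p$, so all $D^{(j)}$ have the same length $2nd=|E(H)|$ and are step-for-step aligned. Each $D^{(j)}$ is Eulerian in $H$: its outer part is $C^{(j)}$ and covers copy $0$, while the translated $\widehat{C^{(1)}}$ covers copy $1$ and the cross edges, because translation by $z_j$ permutes the copy-$1$ edges among themselves and the cross edges among themselves (here the circulant structure of $G$ is used). For avoidance, compare $D^{(i)}$ and $D^{(j)}$ at an interior step $t$. If $t$ lies in a $C_-$ or $C_+$ phase, the two circuits are at $(C^{(i)}(t),0)$ and $(C^{(j)}(t),0)$; adjacency inside copy $0$ of $H$ is exactly adjacency in $G$ (cross edges create no new adjacencies within a copy), and $C^{(i)},C^{(j)}$ are avoiding in $G$, so these vertices are distinct and non-adjacent in $H$ (at $t=p$ this is the statement that $(z_i,0)$ and $(z_j,0)$ are distinct and non-adjacent). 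If $t$ lies in the detour phase, the two circuits are at the same vertex $(a,\ell)$ of $\widehat{C^{(1)}}$ shifted by $z_i$ and by $z_j$, i.e. at $(a+z_i,\ell)$ and $(a+z_j,\ell)$, which are distinct and non-adjacent in $H$ exactly when $z_i-z_j\notin S\cup\{0\}$.

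Thus everything reduces to the set $\{z_1,\dots,z_k\}$ being independent in $G$, and this is the one substantive point — but it is automatic: $z_i$ and $z_j$ are the positions of $C^{(i)}$ and $C^{(j)}$ at the \emph{same} interior step $p$ of a family of mutually avoiding circuits, so $z_i\ne z_j$ and $z_i-z_j\notin S$, which is precisely the condition needed in the detour phase. I expect the main obstacle to be purely organisational: pinning down that $\widehat{C}$ uses each cross and copy-$1$ edge exactly once, that the three phases of any two $D^{(j)}$'s align in length, and that the boundary steps between phases behave. Once these are in place, the $D^{(j)}$ are $k$ mutually avoiding Eulerian circuits of $H$ from $(0,0)$, and vertex-transitivity of $H$ extends them to every vertex, completing the proof.
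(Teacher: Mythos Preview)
Your proof is correct and takes essentially the same approach as the paper: both double $G$ by adding a second copy and cross edges with the same connection set, splice an Eulerian circuit of the cross-plus-copy-$1$ subgraph (the paper's $H'$) into each $C^{(j)}$ at the step $p$ where $C^{(1)}$ first returns to $0$, and use the translate by $z_j$ for the $j$th circuit. You are somewhat more explicit than the paper---you give the specific circuit $\widehat{C^{(1)}}$ rather than ``any Eulerian circuit in $H'$'' and spell out the phase-by-phase avoidance check---but the construction and the key independence observation about $\{z_1,\dots,z_k\}$ are identical.
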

	\begin{proof}
	We follow an amended version of Proof 2 of Theorem~\ref{thm:max2mod4}. Let the vertices of $G$ be $\{0,1,\ldots,n-1\}$ and suppose the connection set of $G$ is $S=\{s_1,s_2,\ldots,s_d\}$. Create a new graph $G'$ by adding new vertices $\{0',1',\ldots,(n-1)'\}$. In $G$ the existing edges are of the form $i\edge(i+s)$ for every $s\in S$; in $G'$ we add the edges $i\edge(i+s)'$ and $i'\edge(i+s)'$ for every $s\in S$ (with arithmetic modulo $n$). Let $H'$ be the subgraph of $G'$ consisting of all the edges of the forms $i\edge(i+s)'$ and $i'\edge(i+s)'$. Then $H'$ is not regular, but has degrees $d$ and $2d$ and hence is Eulerian.
	
	Consider a set $\{C_1,C_2,\ldots,C_k\}$ of mutually avoiding Eulerian circuits in $G$ starting and ending at vertex 0. Construct an Eulerian circuit $C'_1$ in $G'$ as follows. Follow $C_1$ until the first return to vertex 0 after the start point; then follow any Eulerian circuit in $H'$ starting and ending at 0; then follow the remainder of $C_1$. Now construct a circuit $C'_2$ by following $C_2$ until the point that $C_1$ has returned to 0; at this point $C_2$ will be at some vertex $z$. Now continue $C'_2$ by following a `parallel' circuit to that followed by $C'_1$, i.e. if $C'_1$ visits vertex $x$ or $x'$ then $C'_2$ visits $x+z$ or $(x+z)'$ respectively (arithmetic modulo $n$). The parallel circuit ends when all the edges of $H'$ have been visited, so $C'_2$ is back at vertex $z$ (and $C'_1$ is back at 0). Now continue with the remaining edges of $C_2$. Continue in a similar way to construct circuits $C'_3,\ldots,C'_k$. The set $\{C'_1,C'_2,\ldots,C'_k\}$ is a set of $k$ mutually avoiding Eulerian circuits in $G'$.
	\end{proof}
	
	\section{Summary and future research}\label{sec:summary}
	In this paper we have introduced the concepts of a doubly Eulerian graph and more generally, the avoidance index of an Eulerian graph. Not all Eulerian graphs are doubly Eulerian; indeed as we observed in the Introduction the extremal ones, i.e. the complete graphs on an odd number of vertices and the cycles are not. It was natural therefore to study the question of what could be said about extremal doubly Eulerian graphs and construct examples.
	
	In Section~\ref{sec:maximal}, we proved that for a graph of odd order $n$, an edge-maximal doubly Eulerian graph must be a complete graph minus a 2-factor. We proved in Theorem~\ref{thm:oddncirc} that there exist infinite classes with a complete cycle removed; and in Theorem~\ref{thm:3mod6}, when $n$ is divisible by 3, a set of $n/3$ disjoint triangles removed. From Lemma~\ref{lem:avbounds}, the avoidance index of regular graphs of odd order $n$ and degree $n-3$ is either 1 or 2. When $n=9$, removing cycles of the same (resp. different) lengths gives a graph with avoidance index 2 (resp. 1). Is this the case more generally? The situation for $n=11$, $13$ and $15$ may repay further investigation. We present this as the first problem.
	\begin{problem}
		Classify which regular graphs of odd order $n$ and degree $n-3$ have avoidance index either 1 or 2.
	\end{problem}
	For graphs of even order $n$, an edge-maximal doubly Eulerian graph must be a complete graph from which a regular cubic graph, not necessarily connected, has been removed. Again from Lemma~\ref{lem:avbounds}, the avoidance index of regular graphs of even order $n$ and degree $n-4$ is 1, 2 or 3. From Corollary~\ref{cor:multipartite}, when $n$ is doubly even removing a set of $n/4$ disjoint complete graphs $K_4$ gives a graph with avoidance index 3. The six regular graphs of order 8 and degree 4 were discussed in Section~\ref{sec:maximal}. Of these, $K_{4,4}$ has avoidance index 3, four graphs have avoidance index 2 and there is a unique graph with avoidance index 1. We have not attempted an investigation of the 8-regular graphs of order 12, of which there are 94, but for someone with access to powerful computer facilities we give this as the second problem.
	\begin{problem}
		Determine the avoidance index of the 94 8-regular graphs of order 12.
	\end{problem}
	Turning now to complete bipartite graphs, we have proved (Theorems~\ref{thm:K2s2s} and~\ref{thm:K2r2s}) that the avoidance index of the graph $K_{2s,2s}$, $s\geq 1$, is $2s-1$ and of the graph $K_{2r,2s}$, $1\leq r<s$, is $2r-1$. For the graph $K^*_{2r+1,2r+1}$, which is the complete bipartite graph $K_{2r+1,2r+1}$ minus a perfect matching, we have shown that for $r=2,3$ the avoidance index is $2r$; we conjecture that this holds for all $r\geq 2$. The third problem is to prove this.
	\begin{problem}\label{prob:Kstar}
		Prove that $\av(K^*_{2r+1,2r+1})=2r$, $r\geq 2$.
	\end{problem}
	If the conjecture of Problem~\ref{prob:Kstar} is true then the graphs $K^*_{2r+1,2r+1}$, $r\geq 2$ are examples of saturated graphs, i.e. those whose avoidance index is equal to the degree of the regular graph. As we observed in Section~\ref{sec:index}, there can be no saturated graphs of degree 2 and we have examined the situation for degree 4 for graphs of order 13 or less. But we know of no infinite class. Based on our computer calculations outlined in Section~\ref{sec:index}, we conjecture that circulant graphs with various generating sets may provide such classes and this is the next problem. The conjecture also applies to regular graphs of degree 6 (and indeed of higher degree). The circulant graphs on $\Z_{14}$, $\Z_{16}$ and $\Z_{18}$ with generating set $\{1,3,5\}$ are also saturated. The first of these is of course the graph $K^*_{7,7}$.
	\begin{problem}
		Prove that the circulant graphs on the group $\Z_n$ with various generating sets are saturated for large enough $n$.
	\end{problem}
	The smallest saturated non-Cayley graph is the graph $K_{6,6}$ from which an 8-cycle and a 4-cycle have been removed. It is tempting to speculate that this may be the smallest example of an infinite class, but notwithstanding this leads to the next investigation.
	\begin{problem}
		Find more examples of non-Cayley regular graphs which are saturated.
	\end{problem}
	With the exception of the case where $n=9$, edge-minimal doubly Eulerian graphs have edge excess equal to 2. The structure of such graphs is that they have two vertices of degree 4 connected by four paths containing $a$, $b$, $c$ and $d$ valency 2 vertices respectively, $0\leq a\leq b\leq c\leq d$, and are thus characterised by these four parameters. In the important case where the two valency 4 vertices are connected, i.e. $a=0$, Lemma~\ref{abc3} and Theorem~\ref{a0} provide a complete analysis of which graphs have avoidance index 1 and which have avoidance index 2. This enabled us to find edge-minimal doubly Eulerian graphs for all even orders $n\geq 8$. The graphs are bipartite. Our investigations suggest that a complete analysis of which edge excess 2 graphs have avoidance index 1 or 2 is extraordinarily long and tedious. Non-bipartite edge-minimal doubly Eulerian graphs of even order and with edge excess equal to 2 do exist; the smallest is of order 14. This is the next problem.
	\begin{problem}
		Prove that there exists a non-bipartite edge-minimal doubly Eulerian graph of order $n$ with edge excess equal to 2 for all even $n\geq 14$.
	\end{problem}
	We also proved that there exist (edge-minimal) doubly Eulerian graphs with edge excess equal to 2 for all odd $n\geq 11$. These are not bipartite; indeed it is easy to see that no bipartite graph of odd order and with edge excess equal to 2 can exist. Either one of the parameters $a,b,c,d$ is odd and the other three are even or vice-versa. Thus edge-minimal bipartite graphs of odd order must have edge excess at least 3. The two examples for $n=9$ are shown in Figure~\ref{fig:n9e12}.
	\begin{problem}
		Prove that there exists a bipartite edge-minimal doubly Eulerian graph of order $n$ with edge excess equal to 3 for all odd $n\geq 9$.
	\end{problem}
	This leads naturally to the next investigation. An Eulerian graph with edge excess equal to 2 cannot be Hamiltonian. So if we wish to find edge-minimal Hamiltonian doubly Eulerian graphs, they must have edge excess at least 3. The structure of these graphs with edge excess 3 is immediate; such a graph on $n$ vertices must consist of an $n$-cycle (the Hamiltonian cycle), with $n-3$ vertices of valency 2 and the other three vertices, say $u,v,w$ of valency 4 forming a triangle. Let the number of vertices along the Hamiltonian cycle between vertices $u$ and $v$, $v$ and $w$ and $w$ and $u$ be $a,b,c$ respectively, where $a+b+c=n-3$. Without loss of generality we may assume that $a\leq b\leq c$. Denote this graph by $\Lambda=\Lambda(V,E)=\Lambda(a,b,c)$. An example is shown in Figure~\ref{fig:Lambda457}.
	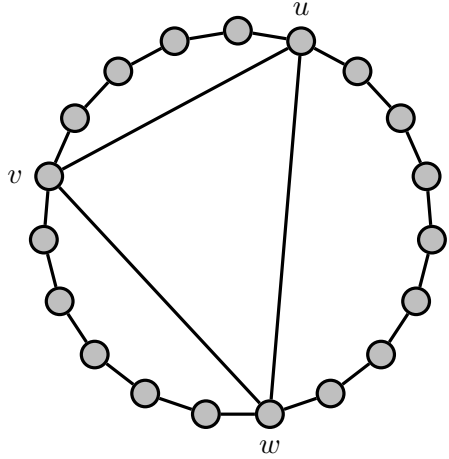
\begin{figure}\centering
		\begin{tikzpicture}[x=0.2mm,y=0.2mm,scale=0.8,very thick,vertex/.style={circle,draw,minimum size=10,inner sep=0,fill=lightgray}]
	\node at (0,160) [vertex] (v1) {};
	\node at (52,151.3) [vertex,label=above:{$u$}] (v2) {};
	\node at (98.3,126.3) [vertex] (v3) {};
	\node at (133.9,87.5) [vertex] (v4) {};
	\node at (155.1,39.3) [vertex] (v5) {};
	\node at (159.5,-13.2) [vertex] (v6) {};
	\node at (146.5,-64.3) [vertex] (v7) {};
	\node at (117.7,-108.4) [vertex] (v8) {};
	\node at (76.2,-140.7) [vertex] (v9) {};
	\node at (26.3,-157.8) [vertex,label=below:{$w$}] (v10) {};
	\node at (-26.3,-157.8) [vertex] (v11) {};
	\node at (-76.2,-140.7) [vertex] (v12) {};
	\node at (-117.7,-108.4) [vertex] (v13) {};
	\node at (-146.5,-64.3) [vertex] (v14) {};
	\node at (-159.5,-13.2) [vertex] (v15) {};
	\node at (-155.1,39.3) [vertex,label=left:{$v$}] (v16) {};
	\node at (-133.9,87.5) [vertex] (v17) {};
	\node at (-98.3,126.3) [vertex] (v18) {};
	\node at (-52,151.3) [vertex] (v19) {};
	\draw (v1) to (v2);
	\draw (v2) to (v3);
	\draw (v3) to (v4);
	\draw (v4) to (v5);
	\draw (v5) to (v6);
	\draw (v6) to (v7);
	\draw (v7) to (v8);
	\draw (v8) to (v9);
	\draw (v9) to (v10);
	\draw (v10) to (v11);
	\draw (v11) to (v12);
	\draw (v12) to (v13);
	\draw (v13) to (v14);
	\draw (v14) to (v15);
	\draw (v15) to (v16);
	\draw (v16) to (v17);
	\draw (v17) to (v18);
	\draw (v18) to (v19);
	\draw (v19) to (v1);
	\draw (v2) to (v10);
	\draw (v10) to (v16);
	\draw (v2) to (v16);
\end{tikzpicture}
		\caption{The graph $\Lambda(4,5,7)$}
		\label{fig:Lambda457}
	\end{figure}
	It is immediate that if any of the parameters $a,b,c$ are odd then the graph is not doubly Eulerian, because two circuits starting from the midpoint of that division of the Hamiltonian cycle will be at two of the valency 4 vertices at the same time.
	\begin{problem}
		Prove necessary and sufficient conditions on the parameters $a,b,c$ for the graph $\Lambda(a,b,c)$ to be doubly Eulerian. In particular, prove that there exists a Hamiltonian edge-minimal doubly Eulerian graph of order $n$ and edge excess 3 for all odd $n\geq 15$.
	\end{problem}
	The next problem relates to Eulerian graphs which are not doubly Eulerian. From Table~\ref{tab:ind4reg}, 4-regular graphs with avoidance index 1 seem to be rare.
	But in Theorem~\ref{thm:4regind1} we gave a simple proof that there exists such a graph for every order $n\geq 5$. For $n\geq 11$, these graphs have a cut vertex;
	indeed this feature is a critical part of the proof. However it would be interesting to discover whether there are other graphs.
	\begin{problem}
		Investigate whether for every $n\geq 14$, there exists a 4-regular graph of order $n$ which is not doubly Eulerian and does not contain a cut vertex.	
	\end{problem}
	
	Finally, in Section~\ref{sec:index} we extended the definition of the avoidance index of an Eulerian graph to that of a vertex, and thus to the mean avoidance index of a graph. Apart from Theorem~\ref{thm:K2r2s} we did not prove any results about the mean avoidance index and we have not investigated this concept in any detail. There are undoubtedly many questions that could be asked, and the whole field is completely open. So the final problem is more general.
	\begin{problem}
		Investigate the concepts of the avoidance index of a vertex and the mean avoidance index.
	\end{problem}
	

\section*{Statements and declarations}
The work of J. Tuite was supported by EPSRC grant EP/W522338/1 and London Mathematical Society grant ECF-2021-27.

The authors have no relevant financial or other conflicts of interest to disclose.

Data files produced during this research are available from the corresponding author on request.
\end{document}